\def\dj{d\kern-0.4em\char"16\kern-0.1em}
\numberwithin{equation}{section}
 \newtheorem{thm}{Theorem}[section]
 \newtheorem{lem}[thm]{Lemma}
 \newtheorem{cl}[thm]{Claim}
 \newtheorem{obs}[thm]{Observation}
 \theoremstyle{definition}
 \theoremstyle{remark}
\newcommand\cC{{\mathcal C}}
\newcommand{\ex}{\mathop{}\!\mathrm{ex}}
\newcommand{\sat}{\mathop{}\!\mathrm{sat}}
\begin{document}

\title{Generalized saturation game}

\author{Bal\'azs Patk\'os\thanks{HUN-REN Alfr\'ed R\'enyi Institute of Mathematics, Budapest, 1053, Re\'altanoda utca 13-15, e-mail: \href{mailto:patkos@renyi.hu}{patkos@renyi.hu} Supported by NKFIH under grant FK132060.} \and Milo\v s Stojakovi\' c\thanks{Department of Mathematics and Informatics, Faculty of Sciences, University of Novi Sad, 21000 Novi Sad, Serbia, e-mail: \href{mailto:milosst@dmi.uns.ac.rs}{milosst@dmi.uns.ac.rs}. Partly supported by the Science Fund of the Republic of Serbia, \#7462, Graphs in Space and Time: Graph Embeddings for Machine Learning in Complex Dynamical Systems -- TIGRA. Partly supported by Ministry of Science,
Technological Development and Innovation of Republic of Serbia
(Grants 451-03-137/2025-03/200125 \& 451-03-136/2025-03/200125). Partly supported by Provincial Secretariat for Higher Education and Scientific Research, Province of Vojvodina (Grant No.~142-451-2686/2021).} \and
Jelena Stratijev\footnote{Department of Fundamental Sciences, Faculty of Technical Sciences, University of Novi Sad, 21000 Novi Sad, Serbia, e-mail: \href{mailto:jelenaknezevic@uns.ac.rs}{jelenaknezevic@uns.ac.rs}. This research has been supported by the Ministry of Science, Technological Development and Innovation (Contract No. 451-03-65/2024-03/200156) and the
Faculty of Technical Sciences, University of Novi Sad through project “Scientific and Artistic Research Work of Researchers in Teaching and Associate Positions at the Faculty of Technical Sciences, University of Novi Sad” (No. 01-3394/1).} \and M\'at\'e Vizer\thanks{Department of Computer Science and Information Theory, Budapest University of Technology and Economics, Budapest, Hungary. Supported by NKFIH grants FK 132060. \href{mailto:vizermate@gmail.com}{vizermate@gmail.com}} }

\date{}

\maketitle
\begin{abstract}
We study the following game version of the generalized graph Tur\'an problem. For two fixed graphs $F$ and $H$, two players, Max and Mini, alternately claim unclaimed edges of the complete graph $K_n$ such that the graph $G$ of the claimed edges must remain $F$-free throughout the game. The game ends when no further edges can be claimed, i.e.~when $G$ becomes $F$-saturated. The $H$-score of the game is the number of copies of $H$ in $G$. Max aims to maximize the $H$-score, while Mini wants to minimize it. The $H$-score of the game when both players play optimally is denoted by $s_1(n,\# H,F)$ when Max starts, and by $s_2(n,\# H,F)$ when Mini starts. We study these values for several natural choices of $F$ and $H$.
\end{abstract}



\section{Introduction}

One of the earliest combinatorial games on graphs is Hajnal's triangle game in which two players alternatingly claim edges of the complete graph $K_n$ such that the graph $G$ of claimed edges cannot contain a triangle. The player who cannot move on his turn, loses. This innocent looking game turned out to be quite hard to analyze, so F\"uredi, Reimer, and Seress~\cite{Furedietal} introduced its quantitative version. The rules of the game remain the same, but the aims of the players are different. The \textit{score} of the game is the total number of edges claimed by the players during the game. One of the players, Max, targets to maximize the score, while the other player, Mini, aims to minimize the score. The \textit{game saturation number} is the score of the game when both players play according to an optimal strategy. Note that this might depend on who plays first, so precisely we have a Max-start and a Mini-start saturation number. Recent improvements on obtaining bounds on these numbers were obtained by Bir\'o, Horn, and Wildstrom \cite{biroetal}. Clearly, the rules and the scores of this game can be changed to forbid any graph $F$, or a family of graphs $\mathcal{F}$. The game in this generality was introduced in \cite{West} and results of this type were obtained in \cite{Englishetal, HKNS, LeeRiet, LeeRiet2, Spiro}. 

This game connects two basic problems of extremal graph theory. The graph of all claimed edges at the end of an $F$-saturation game is going to be \textit{$F$-saturated}, i.e.~$F$-free such that adding any non-edge would create a copy of $F$. The maximum number of edges in such an $n$-vertex graph is the \textit{Tur\'an number} $\ex(n,F)$, while the minimum number of edges in such an $n$-vertex graph is the \textit{saturation number} $\sat(n,F)$. By definition, the game saturation number of $F$ is between $\sat(n,F)$ and $\ex(n,F)$. Note that $\sat(n,F)$ is always at most linear in $n$ \cite{KT}, while $\ex(n,F)$ is only linear if $F$ is a forest, and quadratic if and only if $F$ is not bipartite.

After several sporadic examples, Alon and Shikelman~\cite{Alon} introduced the so-called \textit{generalized Tur\'an problem} that asks for the maximum number $\ex(n,\# H,F)$ of copies of $H$ that an $F$-saturated graph can have, and offered a methodical study of this parameter. Determining the parameter (and several of its variants) has become a popular topic in extremal graph theory in recent years. The generalized saturation number $\sat(n,\# H,F)$, the minimum number of copies of $H$ that an $n$-vertex $F$-saturated graph can have, was first defined in~\cite{Ketal} and results studying this value can also be found in~\cite{CL,ergetal,RV}.

In this paper, we study the game variant, first proposed in~\cite{CB}, of the above mentioned generalized problem. The \textit{$H$-score} of an $F$-saturation game is the total number of copies of $H$ at the end of the game (each subgraph isomorphic to $H$ is connected once). The $H$-score of the game when both players play optimally and Max starts is denoted by $s_1(n,\# H,F)$, and by $s_2(n,\# H,F)$ when Mini starts. When we have a conclusion that is valid independently of who starts the game, then we may simply write $s(n,\# H,F)$.

Our first results are about path-saturation games with star scores. By $P_s$ we denote the path on $s$ vertices, and $S_\ell$ denotes the star on $\ell$ vertices.

\begin{thm}\label{pathstars}
\
    \begin{enumerate}
        \item 
        If $s\ge 7$, then for any $\ell\ge 3$, we have $s(n,\# S_\ell,P_s)=\Theta_s(n^{\ell-1})$.
        \item 
        $s(n,\# S_3,P_4)=n-o(n)$, and $s(n,\# S_\ell,P_4)=0$ for all $\ell\ge 4$.
        \item 
        $s(n,\# S_3,P_5)=3n-o(n)$, $s(n,\# S_4,P_5)=n-o(n)$, $s(n,\# S_5,P_5)\le 1$ and \linebreak $s(n,\# S_\ell,P_5)=0$ for all $\ell\ge 6$.
       \item
       $s(n,\# S_3,P_6)=6n-o(n)$, $s(n,\# S_4,P_6)=4n-o(n)$, $s(n,\# S_5,P_6)=n-o(n)$ and $s(n,\# S_\ell,P_6)=0$ for all $\ell\ge 6$.
    \end{enumerate}
\end{thm}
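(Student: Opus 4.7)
The overall plan is to handle the three parts by first identifying the relevant structure of $P_s$-saturated graphs and then designing strategies for Max and Mini that are consistent with that structure.

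For Part (1), the upper bound $O(n^{\ell-1})$ follows from the extremal estimate $\ex(n,\#S_\ell,P_s)=\Theta(n^{\ell-1})$: by the Erdős--Gallai theorem any $P_s$-free graph has $O(n)$ edges, and by convexity of $\binom{\cdot}{\ell-1}$ the sum $\sum_v \binom{d(v)}{\ell-1}$ is maximized when the degrees are concentrated on one vertex, giving $O(n^{\ell-1})$; this bound is attained by $K_{1,n-1}$, which is $P_4$-free (hence $P_s$-free for $s\geq 4$). For the matching lower bound, the plan is to exhibit a strategy by which Max forces a vertex $v$ to have degree $\Omega(n)$, which then yields $\Omega(n^{\ell-1})$ copies of $S_\ell$ centered at $v$. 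The natural candidate is for Max to fix $v$ in advance and at each of his turns play $vu$ for some currently isolated $u$. The assumption $s \geq 7$ enters here: any chain of length $s-2$ rooted at $v$ that would block further extensions requires at least $s-3$ Mini moves to build, and once Max has accumulated two or more star-edges at $v$, any further chain-extension by Mini itself creates $P_s$ by combining with a second $v$-leg. Combined with the fact that each round consumes at most three isolated vertices (one by Max, at most two by Mini), this should yield $\deg(v)=\Omega(n)$.

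For Part (2), each connected $P_4$-free graph is either a star or a triangle, so a $P_4$-saturated graph is essentially a disjoint union of $K_2$'s and $K_3$'s (with isolated vertices only in restricted configurations). Mini's strategy is to maintain the invariant that every component lies in $\{\text{isolated},K_2,K_3\}$: whenever Max's move extends a $K_2$ to a $P_3$, Mini immediately plays the edge closing the triangle; otherwise Mini plays a new $K_2$ between two isolated vertices. This keeps the maximum degree at most $2$ throughout the game, giving $s(n,\#S_\ell,P_4)=0$ for $\ell\geq 4$. For $\ell=3$ the cherries equal three times the number of $K_3$-components, and the lower bound $n-o(n)$ follows from Max's strategy of repeatedly extending fresh $K_2$'s to $P_3$'s, which forces Mini either to close triangles (yielding $3$ cherries per round) or to allow a star to grow (yielding even more). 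A round-by-round accounting gives $n/3-o(n)$ triangles.

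For Part (3), the target configuration is the disjoint union of $K_4$'s, which is $P_5$-saturated and simultaneously realizes all four claimed values: $3n$ cherries, $n$ copies of $S_4$, and $0$ copies of $S_\ell$ for $\ell\geq 5$ (with one leftover vertex possibly contributing one copy of $S_5$). Mini's strategy steers the game to this structure: whenever the current graph admits completion of a partial $K_4$ without violating $P_5$-freeness, Mini plays the needed edge; otherwise she starts a fresh $K_2$ between isolated vertices. Since within-$K_4$ edges cannot create $P_5$ on only 4 vertices, Mini's completing moves are always legal. This keeps the maximum degree at $3$ (with at most one exception), establishing $s(n,\#S_5,P_5)\leq 1$ and $s(n,\#S_\ell,P_5)=0$ for $\ell\geq 6$. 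Max's matching strategy is to cooperate in completing $K_4$'s, producing $n/4-o(n)$ completed $K_4$'s in roughly $3n/4$ rounds and hence the lower bounds for $\ell=3$ and $\ell=4$.

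The main obstacle is the lower bound in Part (1), where the ``build a star at $v$'' strategy must be proven robust against a wide range of Mini counter-strategies. In particular, one must rule out the possibility that Mini blocks Max's star by forming a cycle among Max's existing $v$-leaves (a ``chain-through-leaves''), which for $s\geq 7$ should still leave enough slack for Max to continue extending; formalizing this likely requires a potential-function argument that tracks both the longest $v$-rooted path and the number of isolated vertices remaining, and possibly allows Max to occasionally switch to fresh centers to maintain $\Omega(n)$ total star-edges. A secondary subtle point, in Part (3), is the sharpness of $s(n,\#S_5,P_5)\le 1$, which requires a careful endgame analysis showing that Mini can always steer the single ``leftover'' degree-$4$ vertex (if any) into a structure from which Max cannot produce a second.
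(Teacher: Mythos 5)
The central gap is in the lower bound of Part (1), which is the heart of the theorem. Your strategy --- Max fixes $v$ in advance and repeatedly plays $vu$ for isolated $u$ --- is defeated by a simple counter-strategy of Mini, and the ``slack'' you invoke for $s\ge 7$ does not exist. Mini can absorb every leaf Max attaches to $v$ into a \emph{single} branch whose longest path from $v$ she extends by one vertex per move: after Max plays $vu_1,vu_2$ she plays $u_1u_2$; after $vu_3$ she plays $u_3u_1$; after $vu_4$ she plays $u_4u_2$; and so on, always keeping $v$ with exactly one nontrivial branch containing a path of $a$ vertices hanging from $v$. Her moves are legal as long as $a+1\le s-1$, but once $a=s-2$ Max's next leaf $u$ would create the path $u,v,\dots$ on $s$ vertices, so Max is blocked with $\deg(v)=O(s)=O(1)$. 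Your parenthetical claim that once $v$ has two star-edges ``any further chain-extension by Mini itself creates $P_s$'' is false precisely because Mini merges the legs into one branch before extending. Fixing this requires a genuinely different mechanism: in the paper, Max first spends a linear number of moves building many vertex-disjoint Hamiltonian components of size roughly $s/2$, manoeuvres until some vertex $v$ has \emph{three} branches each containing a path of $t$ vertices from $v$, and only then attaches whole Hamiltonian components of size $\le t$ to $v$; the legality of each such attachment is guaranteed by a rerouting argument (any alleged $P_s$ through the new component can be rerouted through an unused long branch at $v$, so it would have existed already). None of this is present, or hinted at, in your proposal, and your suggested ``potential function tracking the longest $v$-rooted path'' does not address the absorption attack.

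Parts (2) and (3) follow the same general plan as the paper (Mini confines components to bounded size; Max forces components of size $s-1$ that must become cliques), but two points are asserted rather than proved. In Part (3) the lower bounds cannot rest on Max ``cooperating in completing $K_4$'s'': Mini strongly prefers endstates such as $K_{1,4}$ plus one edge among the leaves (a maximal $P_5$-free $5$-vertex component with only $8$ cherries) or triangles, so Max must \emph{force} a partition into $4$-vertex components against an adversary; likewise the bound $s(n,\#S_5,P_5)\le 1$ needs an argument that Mini can limit the exceptional component to $5$ vertices and that no $P_5$-free $5$-vertex graph has two vertices of degree $4$. These are repairable with a careful case analysis, but the Part (1) lower bound as proposed would fail.
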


Then we turn our attention to $S_4$-saturation games. This means that all vertices remain of degree at most 2 throughout the game, and at the end of the game all but at most one component of the game graph are cycles, and the possible exception is either an isolated edge or an isolated vertex. Therefore, we naturally consider only games with counting the number of paths or cycles. We summarize our findings in the following theorems.

\begin{thm}\label{t:p3}
When $n\leq 7$, the values of $s_1(n, \# P_3, S_4)$ and $s_2(n, \# P_3, S_4)$ are as given in Table~\ref{table:p3}.

For $n\geq 8$ and $i\in\{1,2\}$, we have
$$
s_i(n, \# P_3, S_4) = \begin{cases}
                        n, & \text{if } n \textrm{ and } i \textrm{ are of different parity,}\\ 
                        n-1, & \textrm{if } n \text{ is the same parity as } i.
                         \end{cases}
$$
\end{thm}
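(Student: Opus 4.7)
The plan is to characterize all $S_4$-saturated graphs, reduce the problem to a decision among three possible final configurations, and resolve this by a parity argument. Since $S_4 = K_{1,3}$, staying $S_4$-free forces the game graph to have maximum degree at most $2$ throughout. An $S_4$-saturated graph additionally satisfies that every non-edge has at least one endpoint of degree $2$, so the vertices of degree at most $1$ form a clique of size at most $2$. Hence the final graph is one of: (a) a disjoint union of cycles, (b) a disjoint union of cycles plus one isolated vertex, or (c) a disjoint union of cycles plus one isolated $K_2$. In a graph of maximum degree at most $2$, each $P_3$ has a unique degree-$2$ center, so the $P_3$-count equals the number of degree-$2$ vertices. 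Therefore the final score is $n$, $n-1$, or $n-2$ in these cases, and Max prefers (a) while Mini prefers (c).

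The pivotal configuration is the state $S^*$ consisting of exactly one isolated vertex, exactly one path on $p\geq 3$ vertices, and cycles on the remaining $n-1-p$ vertices. A direct count shows that $S^*$ has $n-2$ edges, and from $S^*$ there are only two types of valid moves: closing the path (ending in case (b) with score $n-1$) or extending the path by the isolated vertex (giving one path on $n$ vertices plus cycles, from which the only valid move is to close the longer path, ending in case (a) with score $n$). The player moving from $S^*$ plays turn $n-1$, and a short parity count shows that this player is Mini precisely when $n\equiv i\pmod 2$. Hence if both players can steer the game to $S^*$ at turn $n-1$, the outcome matches the theorem.

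The proof then has three strategic ingredients. First, Max prevents case (c), yielding $s_i(n,\# P_3,S_4)\geq n-1$: whenever an isolated $K_2$ appears on his turn, Max extends it by connecting one of its endpoints to an isolated vertex or to an endpoint of another non-cycle component, and he avoids moves (such as closing a path next to an isolated $K_2$) that would leave a $K_2$ stranded among cycles. A case analysis of the possible final-move configurations that could yield case (c) shows that Max always has such an alternative move available. Second, when $n\not\equiv i\pmod 2$, Max additionally plays to steer the game into $S^*$ (or an equivalent state from which extending yields case (a)) at his turn $n-1$: he merges paths to minimize the number of non-cycle components, extends with isolated vertices to avoid accumulating too many isolates, and closes paths only when doing so does not leave two isolated vertices with only cycles elsewhere. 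Third, when $n\equiv i\pmod 2$, Mini plays an ``isolated-vertex-protection'' strategy: she designates a target vertex $v^*$ and plays only edges among the other $n-1$ vertices, building and closing cycles on them. If Max plays an edge incident to $v^*$, Mini incorporates $v^*$ into the current path (closing the path into a cycle as soon as possible) and re-designates $v^*$ to be one of the remaining isolated vertices. By parity, Mini plays turn $n-1$, so she has the last say at $S^*$, closing to yield case (b).

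The main obstacle lies in the third ingredient: one must verify that Mini's re-designation strategy remains feasible throughout the game, namely that enough isolated vertices are available and that the path component at $S^*$ has at least $3$ vertices (needed for closing to be a valid move). Both conditions hold thanks to the hypothesis $n\geq 8$; the small values $n\leq 7$ are settled by direct case analysis, yielding the entries in Table~\ref{table:p3}.
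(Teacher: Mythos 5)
Your setup is sound: the characterization of $S_4$-saturated graphs, the identification of the score with the number of degree-$2$ vertices, the resulting trichotomy $n$/$n-1$/$n-2$, and the parity count at the pivotal state $S^*$ all match the paper's framework (the paper phrases this via the ``deficit'' $s-n$ and the state notation $[n^{(v)},n^{(e)},n^{(\ell)}]_t$). Your first two ingredients are also close in spirit to the paper's Lemma on Max's path-extension strategy, which proves the lower bound $\geq n-1$ and the exact value $n$ in the different-parity case --- though even there you omit the endgame analysis (the paper has to handle the positions with $5$ or $6$ remaining isolated vertices by explicit case distinctions, and a fully merged path-extension is needed to control \emph{which} of $S^*$ or its variants is reached at move $n-1$; the game need not pass through $S^*$ at all).

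The genuine gap is your third ingredient, the upper bound $\leq n-1$ in the same-parity case. The ``isolated-vertex-protection'' strategy you describe for Mini does not work. Take $n=8$ with Mini first (same parity, so the claimed score is $7$), and let Mini protect $v^*=h$ and build/close cycles on $\{a,\dots,g\}$: Mini plays $ab$; Max plays $cd$; Mini plays $bc$ (a legitimate ``building'' move among the unprotected vertices, forming the path $abcd$); Max closes the $C_4$; Mini plays $ef$; Max plays $gh$; now no isolated vertex remains, every legal move for Mini joins the two isolated edges into a $P_4$, and Max closes a second $C_4$, giving score $8=n$. The failure mode is structural: Max can spend his moves creating isolated edges and closing Mini's partial cycles, exhausting the supply of isolated vertices so that the leftover degree-$\leq 1$ vertices pair up into cycles with no vertex left over; ``re-designation'' is impossible once the isolated vertices run out, and $n\geq 8$ does not save this. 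The paper's actual proof of this direction is genuinely delicate: it uses a recursive reduction (Mini completes a $P_3$ or $P_4$ early and forces a reduction by $4$ or $5$ vertices while preserving the parity class), and the recursion bottoms out at the base cases $[8]_2,[11]_1,[12]_2\leq -1$, which are verified by an explicit minimax game tree rather than by any clean closed-form strategy. You would need to replace your third ingredient with an argument of comparable strength; as written, the same-parity upper bound is unproven.
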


\begin{table}[htb]
\begin{center}
\begin{tabular}{c|ccccccc|ccccc}
 $n$ & 1& 2& 3& 4& 5& 6& 7& 8& 9& 10& 11& 12 \\ \hline
 $s_1$ & $n-1$& $n-2$& $n$& $n$& $n-1$& $n$& $n$& $n$& $n-1$& $n$& $n-1$& $n$ \\ 
 $s_2$ & $n-1$& $n-2$& $n$& $n$& $n$& $n-1$& $n$& $n-1$& $n$& $n-1$& $n$& $n-1$
\end{tabular}

\caption{Values of $s_1(n, \# P_3, S_4)$ and $s_2(n, \# P_3, S_4)$ for $n\leq 12$.}
\label{table:p3}
\end{center}
\end{table}

\begin{thm}\label{P5S3} For any $n \ge 1$ we have
$$\displaystyle{s(n,\#P_5,S_4)\leq 6}.$$ Additionally, $\displaystyle{s_2(n,\#P_5,S_4)\geq 5}$ for $n=4k$ and $k \geq 2$ or $n=4k+1$. 
\end{thm}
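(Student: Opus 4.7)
Since $S_4=K_{1,3}$, the $S_4$-freeness condition forces maximum degree at most $2$, so the game graph is always a disjoint union of paths and cycles. A short case analysis of an $S_4$-saturated graph (using the fact that any two non-adjacent vertices of degree at most $1$ could have their connecting edge added without creating a $K_{1,3}$) shows that the set of vertices of degree $\leq 1$ forms a clique, which has size at most two since the maximum degree is $2$; hence the end graph is a disjoint union of cycles together with at most one extra component, either an isolated vertex or an isolated edge. Consequently, the number of $P_5$ subgraphs equals $\sum_{i:m_i\geq 5} m_i$ over the cycle lengths $m_1,m_2,\ldots$ (since $C_m$ contains exactly $m$ copies of $P_5$ when $m\geq 5$), so the score is at most $6$ exactly when at most one cycle has length at least $5$ and that cycle has length $5$ or $6$.

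For the upper bound $s(n,\#P_5,S_4)\leq 6$, the plan is to exhibit the following strategy for Mini. On her turn: (i) if some path component has at least three vertices, close the longest such path by joining its endpoints; (ii) otherwise, if at least two isolated vertices remain, play a new $P_2$ between two of them; (iii) otherwise, play any legal edge, preferring to extend a $P_2$ with the lone remaining isolated vertex (creating a $P_3$) over merging two $P_2$'s (creating a $P_4$). The key observation is that at the start of each Max turn the graph contains no path on at least three vertices, so Max's single move can enlarge a component to at most a $P_4$ (by merging two $P_2$'s); Mini then closes it to $C_4$ on her next turn, contributing nothing to the score. A cycle of length $\geq 5$ can arise only when Mini is forced into step (iii): she creates a $P_3$ or $P_4$, Max's response enlarges it to at most a $P_6$ (by merging a $P_4$ with a $P_2$), and Mini closes it to $C_5$ or $C_6$. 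I would then argue that step (iii) occurs at most once: after the resulting big cycle is completed, the remaining components are $P_2$'s (and possibly one isolated vertex) together with cycles, and on every subsequent Mini turn either Max has just created a new $P_3$ or $P_4$ (step (i)) or at least two isolated vertices remain (step (ii)); in particular, Mini never enters step (iii) again, so the total contribution to the score is at most $6$.

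For the lower bound $s_2(n,\#P_5,S_4)\geq 5$ in the cases $n=4k$ with $k\geq 2$ or $n=4k+1$, the plan is to provide the following strategy for Max. On his turn: (i) if there is a path on at least three vertices, extend it into the longest possible path (by merging it with a $P_2$ or appending an isolated vertex); (ii) otherwise, play a new $P_2$ between two isolated vertices. I claim that some path on at least five vertices eventually appears in the game, which must ultimately be closed into a cycle $C_r$ with $r\geq 5$, contributing at least $5$ to the score. Indeed, if Mini always plays $P_2$-creating moves, then for $n=4k$ after $2k$ rounds all vertices are paired into $P_2$'s and it is her $(2k+1)$-th move; lacking isolated vertices she must merge two $P_2$'s into a $P_4$, and Max's next move extends this into a $P_6$ by merging with a third $P_2$. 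For $n=4k+1$ the analogous forced moment has $2k$ $P_2$'s and one isolated vertex, and whether Mini extends (creating a $P_3$) or merges (creating a $P_4$), Max's next move produces a $P_5$ (either by merging $P_3$ with $P_2$, or extending $P_4$ with the isolated vertex). If Mini deviates by creating some $P_3$ or $P_4$ earlier in the game, then a suitable $P_2$-component is still available and Max's strategy immediately extends her creation to a $P_5$ or longer; in either scenario a cycle of length $\geq 5$ is formed.

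The main obstacle in the upper bound proof is rigorously showing that Mini's step (iii) occurs at most once: one needs to check that the budget of isolated vertices and $P_2$-components, together with the alternation of moves, always allows Mini either to close a $P_3/P_4$ just created by Max (step (i)) or to find two isolated vertices (step (ii)) on every one of her turns after the big cycle has been completed. A careful induction on the number of remaining $P_2$-components should suffice, but there are small edge cases at the end of the game that require additional care. In the lower bound, the main obstacle is verifying that Max's extension step always produces a path on at least five vertices regardless of how Mini plays; the essential point is that Mini cannot act on two consecutive turns, so Max always has an intervening move to extend her $P_3$ or $P_4$ before she can close it, and for the specified residues of $n$ modulo $4$ a $P_2$-component or isolated vertex of the needed type is always on hand for the extension.
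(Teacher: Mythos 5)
Your strategies for both players are essentially the ones used in the paper: Mini closes every path Max builds into a short cycle, conceding at most one $C_5$ or $C_6$, and Max echoes isolated edges so that the residue of $n$ modulo $4$ forces Mini to create the first $P_3$ or $P_4$, which he then extends to a $P_5$ or $P_6$. However, two of your intermediate claims are false as stated, and they sit exactly where the real work is. In the upper bound, the invariant ``at the start of each Max turn the graph contains no path on at least three vertices'' fails as soon as Mini enters step (iii), and your case analysis misses the line where Max, instead of extending the path Mini just created, builds a \emph{second} $P_3$ or $P_4$ from spare $P_2$'s. Since Mini closes only one path per turn, two path components on at least three vertices can coexist, and you must exclude the scenario in which Max extends first one and then the other to length $5$ or $6$, giving a score of up to $12$. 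The statement you actually need is: after each of Mini's moves there is at most one path component on at least three vertices, and it has at most four vertices. From this, the score can increase (by $5$ or $6$) only at the single moment Max extends that one path to a $P_5$ or $P_6$; once Mini closes the resulting cycle, every subsequent Max move is forced to create a fresh $P_3$ or $P_4$ (there is at most one isolated vertex left and no open path for him to extend), which Mini immediately closes, so no second long cycle can appear. Your ``step (iii) occurs at most once'' is also not literally true --- it recurs whenever Max declines to extend --- but those recurrences contribute nothing to the score.

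In the lower bound, ``a $P_2$-component or isolated vertex of the needed type is always on hand'' hides the one case that needs an argument: appending an isolated vertex to a $P_3$ yields only a $P_4$, which Mini can then close into a $C_4$, so when Mini creates a $P_3$ Max specifically needs a spare isolated edge. This is where the paper's observation enters: up to Mini's first path-creating move every played edge is an isolated edge (Max mirrors her $P_2$'s), so before that move there are at least two $P_2$'s; forming a $P_3$ consumes only one of them, leaving one for Max's extension to a $P_5$, and if she instead merges the only two $P_2$'s into a $P_4$, then $n\ge 5$ supplies the isolated vertex needed for a $P_5$. With these two repairs, which are precisely the verifications you flagged as obstacles, your argument coincides with the paper's.
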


\begin{thm}\label{P6S3} For any $n \ge 1$ we have
$$\displaystyle{s(n,\#P_6,S_4)= 0}.$$ 
\end{thm}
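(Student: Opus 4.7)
The plan is to exhibit a strategy for Mini that forces every cycle in the final graph to have length at most $5$. To see that this suffices, I would first establish the structure of $S_4$-saturated graphs: $S_4$-freeness is equivalent to maximum degree $\le 2$, so the components of any $S_4$-saturated graph are paths and cycles, and the saturation condition precludes two non-adjacent vertices of degree $\le 1$ (joining them would not create any vertex of degree $3$). It follows that an $S_4$-saturated graph on $n$ vertices is a disjoint union of cycles together with at most one additional component of the form $K_1$ or $K_2$, and a copy of $P_6$ as a subgraph must lie inside some cycle of length at least $6$; hence $P_6$-count $=0$ is equivalent to all cycles having length at most $5$.

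The strategy I would give Mini is, at each of her turns, to close the longest short path whenever one is available: if the current graph contains a path component on $3$, $4$, or $5$ vertices, Mini plays the edge between its endpoints, producing $C_3$, $C_4$, or $C_5$. When no such path exists the non-cycle part consists only of $a$ isolated vertices and $b$ isolated edges, and Mini then plays one of three basic moves according to $(a,b)$: if $b=0$ and $a\ge 2$, she joins two isolated vertices into a new $K_2$; if $a\ge 1$ and $b=1$, she extends the $K_2$ with an isolated vertex to form a $P_3$; and if $a=0$ and $b=2$, she merges the two $K_2$'s into a $P_4$. In the remaining more delicate case, where $a\ge 1$ and $b\ge 2$, the choice between extending with a $K_1$ and merging two $K_2$'s has to be made carefully to sidestep the trap discussed next.

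The verification would then proceed by induction on the move number, tracking the multiset of non-cycle components after each of Mini's moves. The main claim I would prove is that after every one of Mini's moves the state is \emph{Max-safe}, meaning that Max cannot in a single move create a path on $\ge 6$ vertices, nor can he lead the game into a state from which Mini has no good response on her next turn. The principal obstacle is the $K_2$-accumulation trap: if at Mini's turn the non-cycle part were $b\ge 3$ copies of $K_2$ with no $K_1$, her only legal move would produce a configuration containing $P_4+K_2$, and Max could then extend the $P_4$ with a $K_2$ into a $P_6$, which Mini would be forced to close into $C_6$. A finite case analysis over the possible shapes of $(a,b)$ and the presence of each of $P_3, P_4, P_5$ shows that, both in the Max-start and the Mini-start game and starting from the initial state $\{K_1^n\}$, Mini's strategy never leaves a state from which Max can drive the game to this trap. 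Consequently the game terminates with a graph whose cycles all have length at most $5$ (plus possibly one $K_1$ or $K_2$), which contains no copy of $P_6$, so $s(n,P_6,S_4)=0$.
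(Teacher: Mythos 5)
Your reduction is correct (the final graph is a disjoint union of cycles plus at most one $K_1$ or $K_2$, so the $P_6$-count is zero iff every cycle has length at most $5$), and you have correctly located the one real danger in this game: a position in which Mini faces only $K_2$-components and no isolated vertex, so that her forced move creates a $P_4$ coexisting with a $K_2$, which Max joins into a $P_6$. But the proof stops exactly where the work is. In the case you call ``delicate'' ($a\ge 1$, $b\ge 2$) you never say which move Mini makes, and the ``finite case analysis'' that is supposed to show the trap is unreachable is only asserted, not performed. This is not a routine omission: the choice there genuinely decides the game. If Mini extends a $K_2$ to a $P_3$ when $b=2$, Max can answer with a new isolated edge, and the resulting loop (Mini closes the path, Max restores three $K_2$'s) burns isolated vertices until $a=0$ and $b=3$, at which point Mini is in precisely the trap you describe. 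So a ``careful choice'' exists, but your write-up neither identifies it nor proves it works.

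The paper's proof closes this gap with a priority rule and an invariant that your argument lacks. Mini's rules are: close any $P_3$, $P_4$, or $P_5$; \emph{else if the non-cycle, non-trivial part is exactly $2K_2$, merge the two edges into a $P_4$}; else if it is a single $K_2$, extend it to a $P_3$ using an isolated vertex; else claim an isolated edge. The point of putting the merge rule ahead of the extend rule is that after every move of Mini there is at most one non-trivial non-cycle component, and it lies in $\{K_2,P_3,P_4\}$. Since Max adds only one edge, before Mini's next move the non-cycle part is one of $\emptyset$, $K_2$, $2K_2$, $P_3$, $P_4$, $P_5$, $P_3+K_2$, $P_4+K_2$ -- a genuinely finite list in which each case is handled by one of her rules and the invariant is restored. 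In particular $b\ge 3$ never occurs, so your ``delicate case'' with $b\ge 3$ is unreachable and the case $b=2$ is resolved by merging. To repair your proof you would need to (i) commit to the merge move when two isolated edges are present, and (ii) prove the at-most-one-loose-component invariant; without these the argument does not go through.
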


Here we do not discuss the result for the score of the game where we count the number of $P_4$'s, as this result will be discussed later in Section~\ref{subs3.2}. 

Then we focus on cycle-free saturation games. Formally, we let $\cC=\{C_3,C_4,\dots\}$ be the set of all cycles. So at the end of the game, the game graph is a forest. We obtain the following results.

\begin{thm}\label{T2}
$\displaystyle{ \binom{\lfloor\frac{n}{2}\rfloor}{k-1} \leq s(n,\# S_k,\cC) \leq  \binom{\lceil\frac{n}{2}\rceil}{k-1}}$, $k > 4$.
\end{thm}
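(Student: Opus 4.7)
Every $\mathcal{C}$-saturation game on $K_n$ terminates with a spanning tree $T$ of $n-1$ edges, so the $S_k$-score equals $\sum_v\binom{d_T(v)}{k-1}$, or equivalently the sum over all claimed edges $e=xy$ of the marginal contribution $c(e)=\binom{d(x)}{k-2}+\binom{d(y)}{k-2}$ evaluated immediately before $e$ is added. Since $k>4$ gives $\binom{d}{k-2}=0$ for $d\le 2$, any edge with both endpoints of degree at most $2$ contributes nothing. The plan is to exhibit one strategy per player that exploits this.

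For the lower bound, Max commits to a single focus vertex $v$: if Max moves first he picks $v$ arbitrarily, while if Mini moves first by playing $xy$ he sets $v=x$ so that Mini's opening already contributes $d_T(v)\ge 1$. On every subsequent turn Max plays some edge $vw$ with $w$ outside the component $T_v$ containing $v$ (preferring $w$ isolated whenever available). Such a move is legal because $w\notin T_v$ precludes a cycle, and is available precisely while $T_v\ne V$, which coincides with the game not yet being over. Hence $d_T(v)$ equals the number of Max's turns, plus the extra $+1$ from Mini's opening in the Mini-start case. A parity check of the four cases (Max/Mini first, $n$ even/odd), using that the game lasts exactly $n-1$ moves, yields $d_T(v)\ge\lfloor n/2\rfloor$, and so the score is at least $\binom{\lfloor n/2\rfloor}{k-1}$.

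For the upper bound, Mini plays on each of her turns an edge $xy$ with $d(x),d(y)\le 1$ in different components, preferring pairs of currently isolated vertices never before touched by Mini. Such an edge always exists while the forest has at least two components, because every component contains a vertex of degree at most $1$. Under this rule Mini never raises any vertex past degree $2$, and each vertex at which Max plays receives at most one Mini touch (since after Max's first play at $v$, $d(v)\ge 2$ and Mini's strategy does not touch such a vertex). Hence if Max plays $a_v$ edges at $v$ then $d_T(v)\le a_v+1$, and for any non-focus vertex $y$ one has $d_T(y)\le 1+2=3$. For $k>4$ this means $\binom{d_T(y)}{k-1}=0$ for every non-focus $y$, so the score equals $\binom{d_T(v)}{k-1}$, with $d_T(v)\le m_1+1\le\lceil n/2\rceil$ by the same turn-counting as above.

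The main technical obstacle will be the \emph{concentration claim}: a priori Max could try to divide his moves between two potential focus vertices $v_1,v_2$, so we must verify that the resulting total score is at most the single-focus quantity. By the hockey-stick identity $\sum_{j=0}^{d-1}\binom{j}{k-2}=\binom{d}{k-1}$, Max's total contribution from his edges at $v_i$ telescopes to $\binom{d_T(v_i)}{k-1}$, and the convexity inequality $\binom{a}{k-1}+\binom{b}{k-1}\le\binom{a+b}{k-1}$ together with $a_{v_1}+a_{v_2}\le m_1+1$ (since at most one edge $v_1v_2$ is ever played) dominates the two-center sum by $\binom{m_1+1}{k-1}\le\binom{\lceil n/2\rceil}{k-1}$, completing the argument.
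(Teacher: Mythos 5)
Your lower bound is correct and is essentially the paper's argument: the paper's tree-building lemma (Lemma~\ref{lemaTn}), specialized to a star, is exactly your focus-vertex strategy of always joining $v$ to a vertex outside its component, giving $d_T(v)\ge\lfloor n/2\rfloor$ and hence at least $\binom{\lfloor n/2\rfloor}{k-1}$ copies of $S_k$.

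The upper bound, however, has a genuine gap. The claim that every ``non-focus'' vertex ends with degree at most $3$ is not something your Mini strategy delivers: Max does not nominate focus vertices, and nothing stops him from raising many vertices to large degree (say $t$ centres of degree roughly $n/(2t)$, or an arbitrary caterpillar), so the analysis must bound $\sum_v\binom{d_T(v)}{k-1}$ for arbitrary Max play. Your patch treats only two centres, and even there the chain of inequalities does not reach the target: $\binom{a}{k-1}+\binom{b}{k-1}\le\binom{a+b}{k-1}$ with $a+b\le m_1+1$ lands on $\binom{m_1+1}{k-1}$, and when Max starts and $n$ is even we have $m_1=n/2$, so this is $\binom{n/2+1}{k-1}>\binom{\lceil n/2\rceil}{k-1}$. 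A second flaw is the step ``each vertex at which Max plays receives at most one Mini touch,'' justified by ``after Max's first play at $v$, $d(v)\ge2$'': if $v$ was isolated, then $d(v)=1$ after that play and your rule still permits Mini to play at $v$; indeed a vertex can receive Mini edges at degree $0$ and again at degree $1$, so only $d_T(v)\le a_v+2$ is guaranteed, worsening the count further.

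What is missing is a global statement converting ``Mini only plays edges whose endpoints currently have degree at most $1$'' into the exact bound. The paper's route supplies it: Mini's merging moves are chosen so that each one turns a leaf into a degree-$2$ vertex, forcing the final tree to have at least $\lfloor n/2\rfloor$ vertices of degree at least $2$; then Lemma~\ref{numberstar}, a smoothing argument using the constraint $\sum_v d_T(v)=2(n-1)$, shows that any $n$-vertex tree with $x$ vertices of degree at least $2$ has at most $\binom{n-x}{k-1}$ copies of $S_k$, the extremal configuration being a single vertex of degree $n-x$ with everything else of degree $1$ or $2$. That lemma (or an equivalent convexity argument over the whole degree sequence) is the ingredient your per-vertex accounting is trying to replace; without it the upper bound does not close.
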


\begin{thm}\label{T3}
$\displaystyle{\frac{n^2}{16} + O(n) \leq s(n,\# P_4,\cC) \leq  \frac{n^2}{16} + O(n)}$.
\end{thm}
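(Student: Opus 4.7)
A $\mathcal{C}$-saturated graph on $n$ vertices must be a spanning tree, so the game lasts exactly $n-1$ rounds and ends with a spanning tree $T$. Each copy of $P_4$ in $T$ is uniquely identified by its middle edge, which yields the identity
\[
\#P_4(T) = \sum_{\{x,y\}\in E(T)}(d_T(x)-1)(d_T(y)-1).
\]

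For the \emph{lower bound}, Max's strategy is to fix two vertices $u, v$ at the outset, play $\{u,v\}$ as his first move, and on every subsequent turn play an edge $\{u,x\}$ or $\{v,x\}$ whose other endpoint $x$ lies in a component different from the one containing $u$ and $v$. Such an edge exists while the game is active since the $uv$-component is not yet the entire vertex set, and Max alternates between $u$ and $v$ so as to balance the two degrees. A short exchange argument shows that Mini never benefits from playing an edge incident to $u$ or $v$, so we may assume she does not. Hence $d_u + d_v$ at the end of the game equals the number of Max's moves, which is at least $\lfloor (n-1)/2 \rfloor$, and balance yields $d_u, d_v \geq n/4 - O(1)$. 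The middle edge $\{u,v\}$ alone therefore contributes at least $(d_u - 1)(d_v - 1) \geq n^2/16 - O(n)$ copies of $P_4$.

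For the \emph{upper bound}, Mini plays to force the final tree to look essentially like a ``double broom plus a path'': two adjacent vertices $u^*, v^*$ of degree roughly $n/4$ each, with mostly leaf neighbors, together with a long path containing the remaining vertices. Her strategy is to build this long path herself: her first move joins two non-$\{u,v\}$ isolated vertices, and every subsequent move either extends the current path by attaching an isolated vertex to one of its endpoints, or starts a fresh path using a new pair of isolated vertices if the current path has been attached into Max's component. The analysis has two main parts. First, Mini never plays an edge incident to $u$ or $v$, so $d_u + d_v$ grows only through Max's moves and consequently $d_u + d_v \leq \lceil (n-1)/2 \rceil + 1 \leq n/2 + O(1)$; together with the AM--GM inequality this bounds the contribution of the edge $\{u^*,v^*\}$ by $n^2/16 + O(n)$. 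Second, one argues that every other edge of $T$ has at least one endpoint of bounded degree, and there are $O(n)$ such edges, giving a total residual contribution of $O(n)$.

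The main obstacle lies in the second part of the upper-bound analysis. Max can ``attack'' Mini's growing path by claiming an edge $\{u, w\}$ where $w$ is an interior vertex of the current path; this turns $w$ into a degree-$3$ vertex and the single new edge already contributes $2(d_u - 1) = \Theta(n)$ copies of $P_4$. One therefore has to show that Mini's strategy limits Max to $O(1)$ such attacks throughout the game, since each additional attack forces Mini to restart her path from a pair of fresh isolated vertices, and the supply of isolated vertices is only $O(n)$. A potential-function argument tracking both the current $P_4$ count and the reserve of remaining isolated vertices should yield this bound, with a short case analysis handling the endgame after the isolated vertices run out.
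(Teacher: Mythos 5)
Your lower bound is sound and is essentially the paper's argument: Max builds a balanced double star with centers $u,v$ by always attaching a vertex from another component to one of the centers, his own moves alone guarantee $d_u,d_v\ge n/4-O(1)$, and the middle edge already supplies $n^2/16-O(n)$ copies of $P_4$. (The ``exchange argument'' about Mini avoiding $u,v$ is unnecessary: any Mini edge at $u$ or $v$ only increases those degrees.)

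The upper bound, however, has a genuine gap, and you have located it yourself: you need to show that Max cannot profit from repeatedly attaching high-degree vertices to the interior of Mini's path, and the proposed ``potential-function argument'' is not carried out. Worse, the decomposition you sketch --- one heavy edge $\{u^*,v^*\}$ contributing $n^2/16+O(n)$ plus $O(n)$ edges each with an endpoint of bounded degree --- is not forced by Mini's strategy. Max is free to spend his $\approx n/2$ moves on the $\approx n/2$ vertices off Mini's path however he likes; he can, for instance, make many neighbours of $u^*$ and $v^*$ have degree $2$ rather than $1$, and then the edges incident to $u^*$ and $v^*$ other than $\{u^*,v^*\}$ contribute on the order of $(d_{u^*}-1)^2+(d_{v^*}-1)^2$, which your accounting does not control. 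The fix is to replace the game-dynamic analysis by a purely structural extremal statement, which is what the paper does: if an $n$-vertex tree contains a path on $s$ vertices, then it has at most $s-3+2(n-s)+\lfloor\frac{n-s}{2}\rfloor\cdot\lceil\frac{n-s}{2}\rceil$ copies of $P_4$ (proved by induction on $n-s$; the key points are that each off-path vertex is an endpoint of at most two $P_4$'s meeting the path, and the $P_4$'s with both endpoints off the path number at most $\lfloor\frac{n-s}{2}\rfloor\cdot\lceil\frac{n-s}{2}\rceil$). With $s=\lfloor n/2\rfloor+1$ this is $n^2/16+O(n)$ \emph{regardless of what Max does}, so Mini only needs the TreeBuilder fact that she can force a path on $\lfloor n/2\rfloor+1$ vertices into the final tree; no potential function or case analysis of Max's ``attacks'' is needed.
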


Finally, we address $P_5$-saturation games with scores being the number of triangles or paths of length $3$.

\begin{thm}\label{T1}
$\displaystyle{\frac{n-4}{3} \leq s(n,\#K_3,P_5) \leq \frac{n-4}{3} +4}$.
\end{thm}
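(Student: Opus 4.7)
The plan is to prove the lower and upper bounds separately; both require an explicit strategy for the corresponding player.

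\textbf{Lower bound.} Max's strategy is to build disjoint triangles on fresh vertex triples. In each round, Max picks two vertices $a, b$ not yet incident to any claimed edge and plays $ab$. A short case analysis then drives the next moves: either (i) Mini's response $e_1$ is disjoint from $\{a,b\}$, in which case Max picks a fresh vertex $c$ (avoiding the endpoints of $e_1$) and plays $bc$, hoping to close with $ca$; or (ii) Mini's response is an edge $ap$ (or $bp$) incident to the partial triangle, in which case Max immediately plays $bp$ (resp.\ $ap$), completing the triangle $abp$ in only two Max moves. In case (i), after Mini's next move $e_2$, one checks that $ca$ is still legal unless $e_2$ itself is incident to $\{a,b,c\}$; and if $e_2$ is incident to the triangle, then by the $P_5$-free constraint on $e_2$ the closing move $ca$ still creates at most a $P_4$. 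Either way Max secures a new triangle using at most three fresh vertices. Iterating over the course of the game yields at least $(n-4)/3$ triangles, the slack absorbing a leftover group of at most four vertices on which a partial round is interrupted.

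\textbf{Upper bound.} Mini plays so as to steer the final graph towards the minimum-triangle $P_5$-saturated configuration, which is essentially a disjoint union of $K_3$'s (with possibly one small leftover component such as a $K_2$ or a $K_4$). Mini mentally partitions the vertex set into triples and, on each of her moves, either plays an edge inside the triple most recently touched by Max (helping close a $K_3$ there) or, if that is impossible or illegal, starts a fresh triple of her own. This way the game proceeds essentially triple-by-triple, each triple contributing exactly one triangle, and a straightforward accounting bounds the number of triangles in the final graph by $(n-4)/3 + 4$; the $+4$ absorbs the leftover component and a constant number of ``wasted'' moves where Mini is forced off-pattern by Max.

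\textbf{Main difficulty.} The crux is verifying the case analysis in Max's strategy, in particular that the closing edge $ca$ is always a legal move when Mini's interleaved edges avoid $\{a,b,c\}$. The potential obstacle is Mini constructing a $P_4$ with an endpoint in $\{a,c\}$ whose extension via $ca$ would form a $P_5$; showing this cannot happen requires enumerating the possible shapes of Mini's two edges and checking the $P_5$-free constraint in each case. Once this local analysis is done, the global bound follows by iteration, and the dual analysis for Mini --- ensuring her triple-by-triple strategy keeps the score within $(n-4)/3+4$ --- uses the same structural observations about $P_5$-saturated graphs on few vertices.
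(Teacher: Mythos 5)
Both halves of your argument have the same underlying gap: the opponent's edges also end up in the final graph, and it is the \emph{component structure} of the whole graph --- not the legality of your own closing moves --- that controls the triangle count. Your ``main difficulty'' paragraph therefore focuses on the wrong crux.

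For the lower bound, your Max essentially ignores Mini except for local interference with the current triple. But Mini can spend her moves building a large star $K_{1,m}$ on fresh vertices: a star is $P_5$-free, it can keep growing at its center, and once a single leaf--leaf edge is present every further leaf--leaf edge (and every edge to an outside vertex at a leaf) creates a $P_5$ through the existing triangle. So such a component absorbs $m+1$ vertices while contributing exactly one triangle to the final saturated graph. If Mini devotes her roughly $n/2$ moves to one such star, Max's disjoint triangles live on only about half the vertices and the score drops to about $n/6$, well below $(n-4)/3$. The paper's Max strategy is forced to be reactive: after each of his moves every component is one of $K_2$, $C_3$, $C_4$, $K_4-e$, $K_4$, none of which can be legally extended to a $5$-vertex component, and that is what prevents large sparse components.

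For the upper bound, observe that $k$ components equal to $K_4$ contribute $4k$ triangles on $4k$ vertices, so the score stays below $(n-4)/3+4$ only if the final graph contains at most \emph{one} $K_4$. Your Mini, by helping close triples into $K_3$'s and otherwise starting fresh triples, does nothing to stop Max from appending a fourth vertex to a completed triangle (legal, since a triangle with a pendant edge has longest path $P_4$) and then filling in a $K_4$ over his next two moves; he can do this to a constant fraction of the triples, giving $\Omega(n)$ extra triangles and breaking the bound. The paper's Mini instead attaches an isolated vertex to every $4$-vertex component as soon as one appears, pushing it onto $5$ vertices, where a $P_5$-free component can contain at most one triangle (two triangles in a $P_5$-free component force it inside a $K_4$). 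Without some mechanism of this kind --- one that caps the number of components able to reach $K_4$ at one --- the ``$+4$ absorbs the leftovers'' accounting cannot work.
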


\begin{thm}\label{p4p5}
    $s(n,\# P_4,P_5)=(3+o(1))n$.
\end{thm}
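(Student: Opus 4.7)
The plan is to prove $s(n,\#P_4,P_5)=(3+o(1))n$ by establishing matching strategy-based upper and lower bounds. Unlike in the simpler saturation problems treated earlier in the paper, the extremal function $\ex(n,\#P_4,P_5)$ is here of order $n^2$: the double star $D_{p,p}$ with two adjacent centers $u,v$ each carrying $p$ leaves is $P_5$-saturated, has $n=2p+2$ vertices, and contains $p^2=\Theta(n^2)$ copies of $P_4$. So neither bound is automatic. The common target configuration for both players turns out to be a disjoint union of $\lfloor n/4\rfloor$ copies of $K_4$, which is $P_5$-saturated with exactly $12\lfloor n/4\rfloor\approx 3n$ copies of $P_4$: Max favors it because it is the densest $P_5$-saturated structure (three copies of $P_4$ per vertex), while Mini favors it because avoiding any vertex of degree $\ge 4$ is the only way to prevent the $\Theta(n^2)$ blow-up that a double star would achieve.

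For the lower bound $s\ge(3-o(1))n$, I would give Max a strategy that forces many disjoint $K_4$-components. Max fixes a partition of the vertex set into $\lfloor n/4\rfloor$ quadruples and plays only edges inside quadruples, prioritizing the completion of each quadruple into a $K_4$. Any Mini move inside a quadruple only accelerates Max's work; any Mini move across two quadruples is legal only while the combined 8-vertex component is small, and once played it tightens the $P_5$-forbiddance so much that Max can finish off the $K_4$'s on the affected quadruples with a bounded look-ahead. A careful bookkeeping argument should then show that Mini can spoil at most $o(n)$ quadruples, yielding $\#P_4\ge 12(\lfloor n/4\rfloor-o(n))=3n-o(n)$.

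For the upper bound $s\le(3+o(1))n$, I would give Mini a strategy that caps the maximum degree of the final graph at $3$. The key structural lemma is: in any $P_5$-free graph $G$ with $\Delta(G)\le 3$ we have $\#P_4(G)\le 3n$, since each vertex is an endpoint of at most $6$ copies of $P_4$ (the bound $3\cdot 2\cdot 1$ attained in $K_4$, with $P_5$-freeness blocking anything larger), so $2\#P_4(G)\le 6n$ by summing over endpoints. Mini's strategy to enforce the degree cap is preemptive: as soon as two neighbors $a,b$ of a vertex $v$ have appeared in the game graph, Mini plays the edge $ab$, so that $v$ sits inside a triangle before Max can raise $v$'s degree; continuing this way Mini pushes $v$'s component toward $K_4$, so that when $v$'s degree finally reaches $3$ its component is already a $K_4$, and any further edge at $v$ creates a $P_5$ via the internal $P_3$ of that $K_4$ based at $v$.

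The hardest part of the argument is Mini's upper-bound strategy. Max can try to drive a vertex $v$'s degree above $3$ by playing edges at $v$ faster than Mini can seal $v$'s neighborhood, or by working on several candidate centers in parallel. Mini's response must therefore be scheduled globally, and I expect the proof to rely on a potential-function argument tracking all degree-$3$ vertices whose components are not yet $K_4$'s; establishing that Mini always has a legal sealing move while keeping this potential bounded is the technical core of the argument.
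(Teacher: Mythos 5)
Your target structure (disjoint $K_4$'s, three copies of $P_4$ per vertex) and your identification of the double star as the $\Theta(n^2)$ danger are both correct, but both of your strategies have genuine gaps. For the lower bound, a fixed partition into quadruples cannot work: Mini makes as many moves as Max, so she can interfere with a constant fraction of the quadruples, not $o(n)$ of them; and a single cross edge joining two half-built quadruples (say $ab$ in $Q_1$ and $ef$ in $Q_2$, Mini plays $be$) already makes it impossible to complete either quadruple as a $K_4$ on its designated vertex set, since any further edge at $a$ inside $Q_1$ or at $f$ inside $Q_2$ closes a $P_5$. You also never address Mini's cheapest counterplay, isolated $K_3$ components, which contain no $P_4$ at all and which (compare Theorem \ref{T1}) she can produce in abundance unless Max actively prevents triangles. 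The paper's Max strategy is adaptive rather than partition-based: he keeps every non-trivial component triangle-free and of one of a few shapes ($K_2$, $P_4$, $C_4$), so that at the end almost every component is forced to become a $K_4$; and in the one configuration where Mini's interference produces a $D_{2,2}$, Max abandons the $K_4$ plan and grows that double star to $\Theta(\sqrt n)$ leaves per center, harvesting $4n$ copies of $P_4$, which exceeds the target.

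The upper bound is where the proposal really breaks. Mini cannot cap the maximum degree at $3$: a triangle with $k$ pendant edges attached to one of its vertices is $P_5$-free for every $k$, and once a vertex $v$ has two pendant neighbours no $P_4$ can end at $v$ (such a $P_4$ together with a second pendant at $v$ would already be a $P_5$ in the current graph), so adding a further pendant at $v$ is always legal and your sealing move $ab$ does nothing to forbid it. Concretely, if Max joins two disjoint edges into a path $a_1b_1a_2b_2$, both internal vertices are threatened simultaneously and one triangle-closing move cannot neutralize both; Max then drives one of them to degree $4$ and beyond. Moreover, large degree is not in itself harmful (a star or a triangle with many pendants has only $O(1)$ copies of $P_4$ per vertex); what Mini must prevent is components on five or more vertices, in particular two adjacent high-degree vertices. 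The paper's Mini instead closes every $P_3$ into a triangle and every $P_4$ into a $C_4$, trapping every component inside a $K_3$ or a $K_4$ (at most three copies of $P_4$ per vertex) with a bounded number of exceptions. Your potential-function sketch cannot be repaired within the degree-capping framework, because the invariant you are trying to maintain is not enforceable.
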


The rest of the paper is organized as follows. In Section \ref{ps}, we prove Theorem \ref{pathstars}. In Section \ref{deg2}, we consider $S_4$-saturated games and prove Theorem \ref{t:p3}. Section \ref{tree} addresses cycle-saturation games, while in Section \ref{p5} we consider $P_5$-saturation games. We end the paper with concluding remarks.

\section{Paths vs stars}\label{ps}

In this section, we prove Theorem \ref{pathstars}. First we show part (1). Observe that the number of $S_\ell$'s in a graph $G$ is $\sum_{v\in V(G)}\binom{\deg_G(v)}{\ell-1}$. Therefore, the upper bound $s(n,\#S_\ell,P_s)=O_{\ell,s}(n^{\ell-1})$ follows from the result of Erd\H os and Gallai \cite{EG} that states $\ex(n,P_s)\le \frac{s-2}{2}n$ and so $\sum_{v\in V(G)}\deg_G(v)\le (s-2)|V(G)|$ for any $P_s$-free graph $G$.

\medskip

To be able to formulate Max's strategy showing the lower bound in Theorem \ref{pathstars} (1), we need some definitions and some preliminary lemmas. We say that the component $C$ of a graph $G$ is \textit{Hamiltonian} if $|V(C)|\le 2$ or $C$ contains a Hamiltonian cycle.
For an $n$-vertex $F$-free graph $G_n$, in the Max/Mini-start $(G_n,F)$-game the two players claim edges of $E(K_n)\setminus E(G_n)$ alternately such that the claimed edges together with the edge set $E(G_n)$ stays $F$-free.

\begin{lem}\label{addhanging}
    Suppose $G_n$ contains a component $C$ and a vertex $v\in V(C)$ such that $C\setminus \{v\}$ has at least three components $C_1,C_2,C_3$ with $C_i\cup \{v\}$ containing a path of length $t$ that ends in $v$. Suppose the number of Hamiltonian components of $G_n$ of size at most $t$ is $T$. Then Max in the Mini-start $(G_n,P_j)$-game can achieve that $v$ has degree at least $T/3$ at the end of the game.
\end{lem}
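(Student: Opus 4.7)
The plan is to equip Max with a greedy strategy targeting the $T$ small Hamiltonian components one at a time, and then to bound Mini's ability to sabotage this via a potential argument.

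Max's strategy: on each of his turns, Max tries to claim an edge $vu$ such that $u$ lies in some Hamiltonian component $H$ of $G_n$ of size at most $t$ that he has not yet connected to, chosen so that the updated graph remains $P_j$-free; if no such edge is legal, Max plays arbitrarily. Call such a component \emph{available}, and write $A$ for the number of available components at a given moment. My first step is to verify that $A=T$ initially: using the hypothesis that each of $C_1,C_2,C_3$ contains a path of length $t$ issuing from $v$, combined with the $P_j$-freeness of $G_n$, the longest branch from $v$ in $C$ has length at most $j-2-t$ (otherwise concatenation with a length-$t$ branch through another $C_i$ would give a forbidden $P_j$). Hence for any $u$ in a Hamiltonian component $H$ of size $\le t$, the edge $vu$ extends paths through $v$ to length at most $(j-2-t)+1+(|H|-1)\le j-2$, so $G_n+vu$ stays $P_j$-free. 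At the end of the game we must have $A=0$, since any available $H$ would still offer Max a legal strategy move.

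The heart of the argument is a unit-drop claim: each Max strategy move decreases $A$ by exactly $1$ and each Mini move decreases $A$ by at most $2$. The Max half requires checking that claiming $vu$ with $u\in H$ does not inadvertently kill other available components; this uses that the longest path from $v$ in the enlarged component is still at most $\max(L_v,|H|)\le j-2-t$, provided $2t\le j-2$ (a parameter condition respected in the applications). The Mini half is the main combinatorial obstacle: if Mini's edge $xy$ kills an available $H$, then for every $u\in H$ there is a $P_j$ in the updated-graph-plus-$vu$ that uses both $xy$ and $vu$; such a path of length $j-1$ through two prescribed edges has a rigid two-arm structure at $xy$, and exploiting the $P_j$-freeness of the game graph after $xy$ (to bound the $v$-side) together with $|H|\le t$ (to bound the $H$-side) should force the killed $H$ to occupy at most two possible positions around $xy$.

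With the unit-drop claim in hand, the bookkeeping is routine. Writing $K$ for the number of $vu$-edges Max claims by the first time $A$ reaches $0$ and $m$ for the number of Mini moves up to that time, the claim yields $T\le K+2m$; in a Mini-start game $m\le K+1$, hence $3K\ge T-2$ and $\deg(v)\ge K\ge T/3$ up to an additive constant (which can be absorbed for the regime of interest, since the lemma is trivial for small $T$). The main obstacle in executing this plan is the Mini half of the unit-drop claim, namely ruling out that a single Mini edge simultaneously lengthens $v$'s branch in $C$ and spoils connections to many different $H$'s; this is the structural step for which a careful case analysis of the two orientations of $xy$ along the forced $P_j$ will be required.
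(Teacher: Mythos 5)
Your high-level accounting (three small components lost per round, one of which Max converts into a neighbour of $v$, hence degree roughly $T/3$) matches the paper's, but there is a genuine gap exactly where you flag one: the ``Mini half'' of your unit-drop claim is asserted rather than proved, and your legality argument for Max's moves does not survive past the opening position. The bound ``every branch of $v$ has length at most $j-2-t$'' relies on there being two \emph{distinct} branches of $C\setminus\{v\}$ each carrying a length-$t$ path ending at $v$; Mini is free to merge $C_1,C_2,C_3$ with one another (and with other components), and the components you attach to $v$ need not restore such witnesses when $|H|<t$, so the inequality $(j-2-t)+1+(|H|-1)\le j-2$ is not available at a general mid-game position. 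Without it you have neither a proof that an available component can always be legally joined to $v$, nor any bound on how many components a single Mini edge can render unattachable --- and the rigidity analysis of a $P_j$ through two prescribed edges that you defer is precisely the step that would have to carry the whole argument.

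The paper closes this gap with a mechanism that makes your ``Mini half'' vacuous. It maintains the invariant that after each Max move there are at least three components of $C\setminus\{v\}$ whose union with $\{v\}$ contains a path ending in $v$ with \emph{strictly more vertices than any remaining untouched small Hamiltonian component} --- a relative rather than absolute length condition --- and Max always attaches the \emph{largest} remaining such component, which is what restores the invariant after Mini destroys at most one witness branch. Legality is then proved by an exchange argument: a $P_j$ created by the edge $vw$ with $w\in K^h$ uses at most one branch of $C\setminus\{v\}$ besides $K^h$, so its $K^h$-portion can be replaced by a spare long branch, exhibiting a $P_j$ already present before Max's move, a contradiction. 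Hence every untouched small Hamiltonian component is always legally attachable, and the only way Mini can remove one from the pool is to play an edge incident to it, at most two per move; this also yields the clean bound $T/3$ rather than your $(T-2)/3$. To salvage your route you would need to supply an invariant-plus-replacement step of this kind, or actually carry out the two-arm case analysis you postpone.
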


\begin{proof}
Let $K^1,K^2,\dots,K^{m_i}$ be those Hamiltonian components of $G_n$ of size at most $t$ to which no adjacent edges have been played by either of the players before Max's move in round $i$. We claim that in his next move, Max can join any vertex $u$ of a largest $K^h$ to $v$. As Mini can touch at most two components per round, if we can prove the above claim, then the statement of the lemma follows.

We prove the following statement by induction on the number of rounds passed:  after Max's move, if $C^i$ denotes the component containing $v$ of the game graph after round $i$, then there are at least three components $C'$ of $C^i\setminus \{v\}$ such that $C'\cup \{v\}$ contains a path ending in $v$ that has strictly more vertices than any $K^j$. This is certainly true at the beginning of the game as this is the assumption of the lemma. So suppose the statement is true after round $i$. Then Mini can play an edge that keeps the game graph $P_j$-free and after her move, there are at least two components of $C^i \setminus \{v\}$ that together with $v$ contain a path ending in $v$ with strictly more vertices than a largest Hamiltonian component $K^h$. 

Now Max plays an edge $vu$ with $u\in K^h$, and we need to show that the game graph remains $P_j$-free. Suppose not. Then a copy of $P_j$ is created that uses $vu$ and possibly some part of a Hamiltonian path in $K^h$. Observe that $P_j$ can use at most one other component of $C^i\setminus \{v\}$, so the part of the copy of $P_j$  on the Hamiltonian path on $K^h$ can be replaced with the path on the non-used component ending in $v$. This is another copy of $P_j$ that was already present after Mini's move. Hence, Max can play as described.

That move of Max adds the third component containing a path ending in $v$, with strictly more vertices than any $K^j$. This concludes the inductive step, and the proof of the lemma.
\end{proof}

Let us extract the statement from the end of the proof of Lemma~\ref{addhanging}.

\begin{lem}\label{duplication}
If the current game graph $G$ contains a vertex $v$ and two induced Hamiltonian subgraphs $C_1,C_2$, all mutually vertex disjoint, such that $v$ is connected to both $C_1$ and $C_2$, 
then joining a component $C_0$ not containing $v$, satisfying $|C_0|\le \min\{|C_1|,|C_2| \}$, to $v$ is a legal move.
\end{lem}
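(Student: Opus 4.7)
The plan is to replicate, in the notation of the extracted statement, the final argument of the proof of Lemma~\ref{addhanging}. I would let $j$ denote the forbidden path length of the underlying $P_j$-saturation game, pick any $u \in V(C_0)$, and assume for contradiction that adding the edge $vu$ to $G$ creates a copy of $P_j$. Since this $P_j$ must traverse the new edge, I would write it as $x_1 x_2 \cdots x_j$ with $x_p = v$ and $x_{p+1} = u$, and split at $vu$ to produce two sub-paths already present in $G$: the $v$-side $P^v = x_1 \cdots x_{p-1} v$ on $p$ vertices ending at $v$, and the $u$-side $P^u = u x_{p+2} \cdots x_j$ on $j - p$ vertices starting at $u$. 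Since $C_0$ is a whole connected component of $G$ not containing $v$, the sub-path $P^u$ lies entirely in $C_0$, yielding $j - p \le |C_0| \le \min\{|C_1|, |C_2|\}$.

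The crucial step is to observe that $P^v$ with its last vertex $v$ deleted is a connected path in $G - v$, so it meets only a single connected component of $G - v$. Reading the hypothesis in the spirit of Lemma~\ref{addhanging}, with $C_1$ and $C_2$ sitting as distinct induced Hamiltonian components of $G - v$ each joined to $v$, this forces $V(P^v) \setminus \{v\}$ to meet at most one of $V(C_1), V(C_2)$. I would then pick $k \in \{1, 2\}$ with $V(P^v) \cap V(C_k) = \emptyset$, let $v_k \in V(C_k)$ be a neighbor of $v$ in $G$, and take a Hamiltonian path $H_k$ of $C_k$ starting at $v_k$, which exists since $C_k$ is Hamiltonian. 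Concatenating $P^v$ with the edge $v v_k$ followed by $H_k$ gives a simple path $Q$ in $G$ on $p + |C_k|$ vertices, because $P^v$ avoids $V(C_k)$ entirely; from $|C_k| \ge |C_0| \ge j - p$ I would conclude $|Q| \ge j$, producing a $P_j$ in $G$ itself and contradicting the $P_j$-freeness of the current game graph.

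The only delicate point is the disjointness observation that $P^v$ avoids at least one of $C_1, C_2$. This is what the ``$P_j$ can use at most one other component of $C^i \setminus \{v\}$'' step in the proof of Lemma~\ref{addhanging} specializes to in the present two-component extraction, and it is automatic once $C_1$ and $C_2$ are read as distinct components of $G - v$, exactly the configuration inherited from the closing lines of that proof. Once that is granted, the remainder of the argument is a direct concatenation together with an elementary length count.
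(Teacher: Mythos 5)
Your proof is correct and is essentially the paper's own argument: the paper gives no standalone proof of Lemma~\ref{duplication}, stating only that it is extracted from the end of the proof of Lemma~\ref{addhanging}, and that closing argument is exactly the split-at-the-new-edge, avoid-one-side, replace-by-a-Hamiltonian-path computation you reconstruct. Your flagged ``delicate point'' is genuinely needed and correctly resolved: as literally stated the lemma is false (e.g.\ in a $P_6$-free game with $G$ a $5$-cycle through $v$, taking $C_1,C_2$ to be the two edges of the cycle opposite to $v$ and $C_0$ an isolated edge), so one must read $C_1,C_2$ as lying in distinct components of $G\setminus\{v\}$, which is the configuration in every application in the paper.
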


\begin{lem}\label{nicecomp}
Let $2\le S\le L<s$ be positive integers and $\varepsilon>0$ a positive real. Then in the $(E_n,P_s)$-game, where $E_n$ is an empty graph, Max can either achieve that there exists a vertex in the game graph with degree $\varepsilon \frac{n}{3}$, or that the game graph has $\frac{(1-\varepsilon)n}{S+L}$ Hamiltonian components, each of sizes $S$ and $L$.
\end{lem}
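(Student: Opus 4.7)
The plan is for Max to play a phase-based strategy on blocks of $S+L$ fresh vertices, building one $C_S$ and one $C_L$ per block, and to argue that unless Mini's disruptions produce a vertex of degree $\varepsilon n/3$, Max succeeds in completing at least $(1-\varepsilon)n/(S+L)$ such blocks. The strategy is as follows: Max processes fresh vertices in disjoint blocks $U_1,U_2,\ldots$, each of size $S+L$. Inside the current block $U=\{v_1,\ldots,v_S,u_1,\ldots,u_L\}$, Max alternates between extending the active path $v_1v_2\cdots v_i$ by playing $v_iv_{i+1}$ with $v_{i+1}$ still fresh in $U$, and eventually closing the cycle with $v_Sv_1$; he then does the same for the $u$'s. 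If at Max's turn the active path cannot be cleanly extended or closed because of Mini's play, Max abandons $U$, treats the partial structure as a lost block, and starts a new block of $S+L$ fresh vertices.

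The core of the proof is to classify each of Mini's moves. I would group them into three categories: (a) edges internal to the current block $U$, which in the worst case force Max to abandon $U$ and waste at most $S+L$ vertices; (b) edges incident to at least one non-fresh vertex outside $U$, each of which raises the degree of some already-touched vertex by one; and (c) edges between two isolated vertices lying outside $U$, which simply anticipate Max's work or create small extra components. Assume from now on that no vertex ever reaches degree $\varepsilon n/3$ during the game. Then the total number of moves of type (b) is bounded by $\varepsilon n/3$ times the number of vertices ever touched by an edge outside $U$, which is at most $n$; a careful counting gives that altogether Mini spends at most $\varepsilon n/(S+L)$ of her moves destroying blocks in category (a), so all but at most an $\varepsilon$-fraction of Max's phases complete successfully, yielding the required $(1-\varepsilon)n/(S+L)$ pairs of Hamiltonian cycles.

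The main obstacle I anticipate is verifying that Max's closing edge $v_Sv_1$ (and analogously $u_Lu_1$) is actually a \emph{legal} move in the $P_s$-saturation game, since after Mini's intermediate moves the active block $U$ may be joined to outside structure in ways that threaten to create a copy of $P_s$ upon closing. Here Lemma~\ref{duplication} should be invoked in the same spirit as in the proof of Lemma~\ref{addhanging}: any putative copy of $P_s$ using the edge $v_Sv_1$ must leave $U$ through some vertex, and we can reroute the portion inside $U$ through the alternative arc of the cycle (which exists precisely because the target cycle size is $<s$, so the cycle is strictly shorter than the forbidden path). A secondary subtlety is handling type-(c) moves — where Mini quietly builds small components of unintended sizes — for which I would simply absorb the affected vertices into the "lost" count, which remains within the $\varepsilon n$ budget.
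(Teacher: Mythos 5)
Your block-and-budget accounting has a genuine gap: nothing prevents Mini from spending \emph{every} move inside Max's current block. In your scheme each such disruption costs Max a whole block of $S+L$ vertices, and since the game lasts $\Theta(n)$ rounds Mini can in principle destroy $\Theta(n)$ blocks, wiping out far more than an $\varepsilon$-fraction of the vertices. The step where you claim "a careful counting gives that Mini spends at most $\varepsilon n/(S+L)$ of her moves in category (a)" is unsupported; the type-(b) bound you derive (at most $\varepsilon n/3$ per touched vertex, times up to $n$ vertices) is of order $\varepsilon n^2$ and says nothing useful, and in any case the hypothesis "no vertex ever reaches degree $\varepsilon n/3$" is not the same as the disjunct the lemma requires, namely that \emph{Max has a strategy forcing} such a vertex. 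The paper's proof does not treat Mini's interference as a loss to be amortized at all: the engine is Lemma~\ref{addhanging}, a threat mechanism. Max builds the current component by repeatedly attaching an isolated vertex $w$ to a component carrying a Hamiltonian path, and Mini is \emph{forced} to respond by joining $w$ into that component --- if she plays anywhere else (in particular, if she ever touches a finished nice component from outside, or ignores the dangling vertex), Max immediately pivots: he creates a vertex $v$ with three short paths hanging off it and invokes Lemma~\ref{addhanging} with $t=1$ against the linearly many untouched nice components, winning the first alternative of the lemma outright. So either Mini cooperates on every move and Max completes $\frac{(1-\varepsilon)n}{S+L}$ components of each size essentially undisturbed, or a single deviation hands Max a vertex of degree $\varepsilon\frac{n}{3}$. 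This forcing step is the missing idea in your proposal; without it the two alternatives of the lemma are never linked, and the budget argument cannot be repaired. (Your rerouting argument for the legality of the closing edge is in the right spirit, but in the paper it is largely moot, since the threat mechanism keeps the component under construction disconnected from the rest of the graph.)
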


\begin{proof}
Max will complete his goal no later than when the game graph has at most $\varepsilon n$ isolated vertices. Max works in stages: in every odd stage, he builds a Hamiltonian component of size $S$ and in every even stage, he builds a Hamiltonian component of size $L$. We will show that he can build such a component claiming only edges in that component such that at the end of the stage there is at most one edge claimed by Mini that does not belong to any of the components of size $S$ or $L$. (We call these the \textit{nice components}.)

Eventually, we will prove that either Max is able to create $\frac{(1-\varepsilon)n}{S+L}$ nice components, each of sizes $S$ and $L$, or he is able to apply the strategy of Lemma \ref{addhanging} to obtain a vertex of degree at least $\varepsilon\frac{n}{3}$. First observe that if Mini ever claims an edge $uv$ with $v$ being a vertex of a nice component $C$ and $u\notin C$, then Max can claim an edge $vw$ with an isolated vertex $w$ and Lemma \ref{addhanging} can be applied with $t=1$ and Max creates a vertex of degree $\varepsilon\frac{n}{3}$. Therefore, from now on we can assume that this never happens.

Suppose Max has created $r$ nice components such that at most one edge claimed by Mini does not belong to these components. (The base case $r=0$ certainly holds at the beginning of the game.) Now Max wants to create another Hamiltonian component of size $M$, which is either $S$ or $L$ depending on the parity of $r$. If $M=2$, then if there is an edge $e$ played by Mini that does not belong to components created by Max, Max calls $e$ a nice component and stage $r+1$ is finished, while if there is no such edge, Max claims an isolated edge and calls that a nice component and stage $r+1$ is finished. 

Now we can assume $M\ge 3$. Suppose there exists an isolated edge $e$ played earlier by Mini that does not belong to nice components. Then Max claims an edge that is adjacent to $e$ and an isolated vertex, and from here on he always adds an isolated vertex $w$ to this current component $C$ such that $C$ contains a Hamiltonian path. If $C$ has at least 3 vertices, then Mini must claim an edge between $w$ and a vertex of $C$ as otherwise, Max claims an edge $xw'$, where $x$ is the unique neighbor of $w$, and $w'$ is an isolated vertex, and then Max can apply Lemma \ref{addhanging} with $v=x$, $t=1$. So, Max reaches a component of size $M$ with a Hamiltonian path, and in his next move he closes this path to a Hamiltonian cycle. Also, if $M=3$, then Mini has to close the $P_3$-component to a triangle as otherwise Max can apply Lemma \ref{addhanging}. This finishes stage $r+1$, possibly after Mini's move.

Finally, if after Max finishes a stage there are no isolated edges, then Max claims an isolated edge $e_1$, and Mini claims another isolated edge $e_2$, as otherwise Max creates a copy of $S_4$ and applies Lemma \ref{addhanging}. 

If $M=3$, then Max extends his edge to a copy of $P_3$ which Mini has to close into a triangle, as otherwise Max can create a copy of $S_4$, or a $P_l$, $l\in \{4,5\}$, with a hanging edge (depending on whether Mini touched $P_3$ or not), and apply Lemma \ref{addhanging}. So, if $M=3$, then stage $r+1$ is finished. 

If $M\ge 4$, then Max connects edges $e_1$ and $e_2$ to obtain a copy of $P_4$ that Mini next has to close to a $C_4$, as otherwise Max can connect an isolated vertex to the neighbor of a degree one vertex of the copy of $P_4$ and apply Lemma \ref{addhanging}. From here on, Max can follow the same strategy as above to obtain his Hamiltonian component of size $M$ and finish stage $r+1$.
\end{proof}

Now we are ready to prove the lower bound in Theorem \ref{pathstars} (1). The following theorem clearly implies the bound.

\begin{thm}\label{star}
    If $s\ge 7$, then in the $(E_n,P_{s})$-game Max can achieve that there exists a vertex in the game graph with degree $\Theta_s(n)$.
\end{thm}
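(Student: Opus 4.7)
We propose to combine Lemmas \ref{nicecomp} and \ref{addhanging} via an intermediate ``spider-setup'' step in which Max configures a hub vertex $v$ with three small components attached to it. Fix a small $\varepsilon > 0$ and apply Lemma \ref{nicecomp} with $S = L = 2$, a choice legitimate since $2 \le 2 < s$. If this already delivers a vertex of degree $\varepsilon n/3 = \Theta_s(n)$, we are done; otherwise the current game graph contains $\Omega_s(n)$ untouched isolated edges, with at most $\varepsilon n$ isolated vertices.

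In the latter case, Max sets up the hypothesis of Lemma \ref{addhanging}. Picking an untouched isolated edge $vw$, he declares $v$ the hub; in each of his next two moves he selects another isolated edge $u_iu_i'$ untouched by Mini's intervening plays (still possible since $\Omega_s(n)$ of them remain while Mini can affect only a bounded number per round) and plays $vu_i$. Every such move stays $P_s$-free: any path through $v$ at this point uses at most two branches, each of at most two vertices apart from $v$, giving a path on at most five vertices, which is strictly less than $s \ge 7$. If Mini perturbs a branch, for instance by claiming $vu_i'$ to close it into a triangle through $v$, Max simply attaches a further fresh isolated edge and proceeds. After these two (or marginally more) Max moves, $v$ has degree at least three in the game graph, $C \setminus \{v\}$ has at least three components, and each such component $C_i$ satisfies that $C_i \cup \{v\}$ contains a $P_3$, i.e.\ a path of length $t = 2$.

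Finally, invoke Lemma \ref{addhanging} with $t = 2$. The parameter $T$ there counts Hamiltonian components of size at most $2$ (isolated edges and vertices), and remains $\Omega_s(n)$ after subtracting the $O(1)$ components disturbed during setup. The lemma then guarantees that Max can push $\deg_G(v) \ge T/3 = \Theta_s(n)$, which is the conclusion we want. A minor technicality is to match turn parity between the two phases (Lemma \ref{addhanging} is stated for Mini-start games), which Max can resolve by wasting at most a single move, e.g.\ claiming an extra isolated edge before the transition.

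The principal obstacle is the spider-setup phase: ensuring Max can always legally attach three branches to $v$ regardless of Mini's optimal replies, and that these branches persist long enough for Lemma \ref{addhanging} to take over. This is exactly where the assumption $s \ge 7$ is used critically, providing a two-vertex margin between the maximum path ever created through $v$ (of at most five vertices) and the forbidden $P_s$; this slack both keeps Max's setup moves legal and allows Lemma \ref{addhanging} to keep accepting new attachments of size-$\le 2$ components throughout its execution.
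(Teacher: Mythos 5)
There is a genuine gap, and it sits exactly where you flag ``the principal obstacle'': the legality of the spider setup does not survive Mini's best reply. Your claim that during setup ``any path through $v$ \dots uses at most two branches, each of at most two vertices apart from $v$, giving a path on at most five vertices'' fails after a single move of Mini, because she can \emph{grow} a branch at its far end rather than perturb it at $v$. Concretely, for $s=7$: after Phase~I the board is essentially a perfect matching; Max plays $vu_1$ (with $vw$, $u_1u_1'$ isolated edges), creating the path $w$--$v$--$u_1$--$u_1'$; Mini replies $u_1'x$ where $xx'$ is a fresh isolated edge, creating the $6$-vertex path $w$--$v$--$u_1$--$u_1'$--$x$--$x'$. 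Now \emph{every} further attachment of an isolated edge $u_2u_2'$ to $v$ creates the $7$-vertex path $u_2'$--$u_2$--$v$--$u_1$--$u_1'$--$x$--$x'$, i.e.\ a forbidden $P_7$, so the hub $v$ is dead after one attachment. The only sabotage you consider (Mini closing a triangle through $v$) is the harmless one. This is precisely why the paper does \emph{not} take $S=L=2$: it builds nice components of size roughly $s/2$ (chosen so that $3S\ge s$ and $S+L<s$), so that Mini \emph{cannot} extend a branch hanging off the hub without herself creating a $P_s$, and it adds a clique-forcing/parity argument (Phase~II) so that it is \emph{Mini} who is forced to make the first connection between components, giving Max the tempo to attach his second and third branches before Mini can interfere. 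A secondary, fixable issue: with $t=2$ the invariant in the proof of Lemma~\ref{addhanging} requires each of the three components of $C\setminus\{v\}$ to contain a path on strictly more than $2$ vertices ending at $v$, and your branch $\{w\}$ (the other endpoint of $v$'s own matching edge) supplies only a $2$-vertex path; you therefore need three genuine attachment moves, not two, which gives Mini more sabotage opportunities, compounding the first problem.

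Your outline can probably be repaired for $s$ somewhat larger than $7$ (where Mini's extension move itself creates a $P_s$ once two branches are in place), and even for $s=7$ one can sometimes relocate the hub to a vertex of the extended path (e.g.\ to $u_1$ in the scenario above, whose two branches then have $3$- and $4$-vertex paths ending at it). But that requires a case analysis of Mini's replies that your writeup does not contain, and it is not the paper's route. As written, the argument is not correct: the two-vertex ``slack'' you invoke for $s\ge 7$ is consumed by Mini's very first counter-move.
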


\begin{proof}
We describe a strategy for Max that consists of three phases.

\smallskip

\textsc{Phase I} Building Hamiltonian components of size $L$ and $S$

\smallskip

In this first phase Max applies Lemma \ref{nicecomp} with $L$ and $S$ as below and $\varepsilon:=\frac{1}{18(S+L)}$. The value of $S$ and $L$ depends on $s$ as follows:
\begin{itemize}
\item
if $s=4k$, then $L=2k-1$, $S=2k-2$;
\item
if $s=4k+1$ or $s=4k+2$, then $L=2k$, $S=2k-1$;
\item
if $s=4k+3$, then $L=2k+1$, $S=2k$.
\end{itemize}
Note that at the end of this phase, the game graph either contains a vertex of high degree (in which case we are done) or at least $\frac{n}{2(S+L)}$ nice components, each of sizes $L$ and $S$, so the number of such components is at least 5 times the number of vertices that do not belong to nice components (with room to spare). We call these vertices \textit{bad}. In the remainder of the proof, w.l.o.g.~we assume the existence of the nice components and that $s\ge 11$ or $s=9$ which implies $3S\ge s$. We will deal with the cases $s=7,8,10$ separately.

\medskip

\textsc{Phase II} Forcing Mini to connect two nice components

\smallskip

At the beginning of Phase II, all components are Hamiltonian as nice components are such by definition, and the proof of Lemma \ref{nicecomp} ensures that all other components are isolated vertices with the exception of possibly one isolated edge. During this phase, components may merge. At any point of the game, a component that contains a bad vertex is a \textit{bad component}. If Max connects two nice components, then we call the resulting component a \textit{joined component}. A joined component $C$ becomes a \textit{problematic} component if Mini connects it to a bad component $C'$ with $|C|+|C'|\ge s$. A component $C$ is \textit{good} if no nice component can be connected to $C$ at any of its vertices. (Observe that a component can be good \textit{and} bad at the same time.) 

This phase terminates if Mini claims an edge that connects two Hamiltonian components of size at least $S$. Our aim is to show that Max has a strategy to terminate this phase while there are $\Theta_s(n)$ nice components of size both of $S$ and $L$ that are not connected to other components.

Max plays according to the following strategy. His first priority is that after his moves, all components should be either Hamiltonian or good. If Mini connects two Hamiltonian components, at least one of which is not an isolated vertex, then this merged component contains a Hamiltonian path, and so Max with his next move will make the union Hamiltonian again without increasing the length of the longest path and thus by a valid move. If Mini plays an edge within a Hamiltonian or a good component, then Max, if possible, responds by claiming an edge within any Hamiltonian or good component. If Mini connects two isolated vertices, the created component is Hamiltonian. Also, if two isolated vertices exist, then Max can connect them and still have all components Hamiltonian or good. 

Hence, Max cannot keep all components Hamiltonian or good only if all Hamiltonian components are cliques and good components are maximal. In this case, Max claims an edge connecting two nice components of \textit{odd} size (note that independently of $s$ exactly one of $L$ and $S$ is odd), and thus Max creates a joined component. Note that whenever Max creates a joined component, he joins two odd cliques. Also, note that by the assumption $3S\ge s$, a bad Hamiltonian component can contain at most 2 nice components.

Suppose there exists a joined component $C$ that is not yet good. Then Max plays as follows: he keeps his strategy of keeping all other components Hamiltonian or good with the modification that if Mini plays within a component, then Max claims an edge that makes $C$ Hamiltonian. If Mini connects a bad component $C'$ to a joined component $C$ making it bad, then if $|C|+|C'|< s$, it is valid to claim any edge within this component, and Max will claim an edge that will create a cycle containing all vertices of $C$. As $3S\ge s$ independently of $s$, no further nice components can be added to $C\cup C'$, thus this component becomes both good and bad and the number of components that are neither good nor bad (that are not nice) is zero again.

Next, suppose Mini connects the joined component $C$ to a bad Hamiltonian component $C'$ with $|C|+|C'|\ge s$. If that happens, we still refer to this newly formed component as $C$. Similarly, later in the proof, adding new components to $C$ will cause this component to grow. Let $C_1,C_2$ be the nice components joined in $C$. Let $uv$ be the edge joining them with $v\in C_1,u\in C_2$. As $C_1$ and $C_2$ are cliques (otherwise, as we already concluded, Max would not have been forced to create $C$) the edge $e$ connecting $C$ and $C'$ must contain $u$ or $v$, since otherwise $e$ would create a copy of $P_s$. Without loss of generality, we can assume $e=vw$ with $w\in C'$. 

If $|C'|\ge S$, then Max claims an edge $vz$ with $z\in C''$, a nice component of size $S$. This is a legal move by Lemma \ref{duplication}. 
Now Max applies Lemma \ref{addhanging} with $t=S$ and $v$ to obtain a vertex of degree $\Theta_s(n)$. 

Similarly, if $|C_1|=|C_2|=L$, i.e.~$s=4k$ or $s=4k+3$, then the same move of Max lets him apply Lemma \ref{addhanging} even if $|C'|<S$ as then $C_1\setminus \{v\},C_2,C''$ are all of size at least $S$.

Now we can assume $s=4k+1$ or $s=4k+2$ and $|C'|<S$. Then, $|C_1|=|C_2|=S$, and Max claims an edge $vu_3$ with $u_3\in C_3$ a nice component of size $L$ (a legal move as the largest path goes from $C_3$ to $C_2$ and contains $L+1+S<s$ vertices). Now if Mini does not claim an edge connecting $C_2$ and $C_3$, then in his next move Max claims an edge $vu_4$ with $u_4\in C_4$ a nice component of size $S$ and applies Lemma \ref{addhanging} with $v$ and $t=S$ ($C_2,C_3,C_4$ are all of size at least $S$), to obtain a vertex of degree $\Theta_s(n)$. Therefore, assume that Mini claims an edge $e$ connecting $C_2$ and $C_3$. Observe that $e=uu_3$ as all other edges would yield a path on $L+S+S\ge s$ vertices (remember, all nice components are cliques by now). 

From here on Max will always claim an edge $u_hx$ with $x$ being one of $u,v$ or the previous $u_i$'s, and $u_h$ a vertex of a nice component $C_h$ of size $S$, where $h$ is always one larger than the previously used index. The fact that this is a legal move can always be seen by applying Lemma \ref{duplication}. Note that the set $U=\{u_1=v,u_2=u,u_3,u_4,\dots\}$ disconnects the $C_i$'s, so a longest path from $C_{i}$ to $C_j$ contains $|C_i|+|C_j|+a_{ij}$ vertices where $a_{ij}$ is the most number of inner vertices a path from $u_i$ to $u_j$ in $G[U]$ may contain. If Mini responds by claiming an edge $e$, then $e=u_hy$ with $y\in U$. Indeed, connecting $C_h$ to a vertex in $C_i\setminus U$  or a vertex from $U\setminus \{x\}$ to a vertex $C_h\setminus \{u_h\}$ would create a copy of $P_s$, while not connecting $C_h$ to any vertex of $U\setminus \{x\}$ allows Max to connect a $C_{h+1}$ to $x$ and apply Lemma \ref{addhanging} to obtain a linear degree vertex.

Let us take a closer look at how Max will connect new components.
Max first claims the edge $u_3u_4$, $u_4\in C_4$, where $C_4$ is a nice component of size $S$. If Mini connects $u_4$ to $u_1$ or $u_2$, then a copy of $P_{4k+1}$ is created from $C_4$ to $C_3$. So, if $s=4k+1$, this is not possible, and hence with his next move Max can claim $u_3u_5$ and apply Lemma \ref{addhanging} to obtain a vertex of linear degree.

If $s=4k+2$, then Mini has to claim either $u_4u_1$ or $u_4u_2$, say she claims $u_4u_2$. Then Max claims $u_5u_3$, and then Mini either does not connect $C_5$ to $U$ and lets Max claim $u_3u_6$ and obtain a linear degree vertex via Lemma \ref{addhanging} or her only possibility is to connect $u_5$ to $u_2$ as connecting to $u_1$ or $u_4$ would create a copy of $P_s$ from $C_5$ to $C_3$. But now the problematic component $C$ has become good, as connecting any further nice component to $C$ would create a copy of $P_s$.

\medskip

Let us now analyze how many nice components are merged to other components during this phase. As $3S\ge s$, any Hamiltonian component can contain at most 2 originally nice components. The way how good components are created shows that any good component can contain at most 5  originally nice components. How many times can Max be forced to join two odd nice components? As the number of edges between two such components is odd, therefore if the players only play these edges, then Max will claim the last one, and thus Mini will need to move next. Therefore, between two rounds when Max creates a joined component, either the number of bad components decreases or a joined component is connected to a bad component. We obtained that the total number of nice components that increase their size is at most six times the number of bad vertices at the beginning of the phase, so at most $\frac{n}{3(S+L)}$. Therefore, when Phase II finishes we still have at least $\frac{n}{6(S+L)}$ nice components, each of sizes $S$ and $L$.

\medskip

\textsc{Phase III}

\smallskip

We distinguish cases based on how Phase II terminates. 

\smallskip

\textsc{Case I} Mini connects two nice components at least one of which has size $L$.

Let $uv$ be the edge claimed by Mini connecting the two nice components with $v$ being in a component of size $L$. Then Max claims an edge $vw$ where $w$ is a vertex of a nice component of size $S$. This is a valid move as in the newly formed component the longest path contains at most $L+S<s$ vertices. Now, similarly to previous cases, Max can apply Lemma \ref{addhanging} with $v$ and $t=S$ to obtain a vertex of degree at least $\Theta_s(n)$.

\smallskip

\textsc{Case II} Mini connects two nice components of size $S$.

Let $u_1u_2$ be the edge claimed by Mini connecting the two nice components $C_1, C_2$ of size $S$. Then Max claims an edge $u_2u_3$ where $u_3$ is a vertex of a nice component of size $L$. This is a valid move as in the newly formed component the longest path contains  $L+S+1<s$ vertices. Note that this time we only know that nice components are Hamiltonian, but they are not necessarily cliques. However, in a Hamiltonian component of size $m$, between any pair of vertices, there exists a path on $\lceil \frac{m+2}{2}\rceil$ vertices. First observe that if Mini does not connect $C_3$ to $C_1$ or $C_2\setminus \{u_2\}$, then Max can claim an edge $u_2u_4$ with $u_4\in C_4$, $|C_4|=S$ (a legal move by Lemma \ref{duplication}!) and apply Lemma \ref{addhanging} with $v=u_2$ and $t=S$ to obtain a linear degree vertex. 

Next, observe that if Mini claims any edge $e$ with $e\neq u_3u_1$ that connects $C_3$ to $C_1$ or to $C_2\setminus \{u_2\}$, then a path on $L+\lceil \frac{S+2}{2}\rceil+S$ or $S+\lceil \frac{L+2}{2}\rceil+S$ vertices would be created. Considering the four cases of the possible residue class of $s$, one can easily check that $L+\lceil \frac{S+2}{2}\rceil+S\ge s$ and $S+\lceil \frac{L+2}{2}\rceil+S\ge s$ hold if $s\ge 9$.

Finally, if Mini claimed $u_3u_1$, Max claims an edge $u_3u_4$ with $u_4$ being a vertex in a nice component $C_4$, where the size of $C_4$ is $S$ if $s=4k+1$ or $s=4k+3$, while $|C_4|=L$ if $s=4k$ or $s=4k+2$. These are legal moves as the longest path is of size $2S+2$ in the former case, and $S+L+2$ in the latter case. Also, Mini next cannot claim any edge connecting $C_4$ to $C_1\cup C_2\cup C_3 \setminus \{u_3\}$ as that would create a path on at least $2L+2\ge s$ vertices if $s=4k$ or $4k+2$, and a path on $S+L+2\ge s$ vertices if $s=4k+1$ or $4k+3$. So in his next move, Max can claim an edge $u_3u_5$ with $u_5\in C_5$ a nice component of size $S$ and can apply Lemma \ref{addhanging} to obtain a vertex of linear degree.

\medskip

We now settle the remaining cases $s=7,8,10$. The proofs are all similar to the one above, so we do not elaborate on all the details. In all three cases, Max applies Lemma \ref{nicecomp} to build nice components of sizes 2 and 3 if $s=7$, of sizes 2, 3, and 4 if $s=8$ (formally, this requires a straightforward modification of the lemma), and of sizes 3 and 4 if $s=10$. In all cases, we set $\varepsilon=\frac{1}{100}$. Then Max tries to force Mini to connect two nice components. After creating the nice components there are $\frac{n}{100}$ bad vertices that are not in nice components. Max tries to keep every component Hamiltonian. A bad Hamiltonian component can contain at most three originally nice components. Max is only unable to follow this strategy if all components are cliques and it is his turn to move. Then he connects two nice triangles. Then three possibilities can happen: either the players take edges between the triangles alternately and thus Max passes the move to Mini, or Mini joins this joined component to another nice component and then Max can obtain a linear degree vertex using Lemma \ref{addhanging}, or Mini connects the joined component to a bad clique in which case one can see that either Max can finish with Lemma \ref{addhanging} or keep the component of bounded size. So altogether bad components can absorb at most 10 times their number of nice components and thus Mini will indeed be forced to connect two nice components. The below case analysis finishes the proof.

\textsc{Case A} $s=7$

If Mini connects a nice triangle $T$ to another nice component $C$ by claiming an edge $e=uw$ with $u\in T, w\in C$, then Max claims $uw'$ with $w'\in C'$ another nice isolated edge and applies Lemma \ref{addhanging} with $v=u$, $t=2$ to finish the proof. If Mini claims $u_1u_2$ with $u_1v_1,u_2v_2$ being nice edges, then Max claims an edge $u_1u$ with $u\in T$ a nice triangle. As $P_7$ is forbidden, within $T\cup \{u_1,v_1,u_2,v_2\}$ Mini can only claim $uu_2$ or an edge that is incident to $u_1$. If Mini does not claim $uu_2$ (including the possibility that she claims an edge not within this component), then Max in his next move claims $u_1u_3$ with $u_3$ in a third nice isolated edge and
 applies Lemma \ref{addhanging} with $v=u_1$, $t=2$ to finish the proof. If Mini claims $uu_2$, then Max claims an edge $uu_4$ with $u_4$ in another isolated edge and applies Lemma \ref{addhanging} with $v=u$, $t=2$ to finish the proof.


\smallskip

 \textsc{Case B $s=8$}

 If Mini claims an edge $u_1u_2$ with $u_1\in C_1,u_2\in C_2, |C_1|\ge 3$ and $C_i$ nice, then Max claims $u_1u_3$ with $u_3$ in a nice isolated pair and then applies Lemma \ref{addhanging} with $v=u_1$, $t=2$. If Mini claims an edge $u_1u_2$ with $u_1\in C_1,u_2\in C_2, |C_1|=|C_2|= 2$, then Max claims $u_1u_3$ with $u_3\in C_3$, $|C_3|=4$ a nice component. As the game is $P_8$-free, within $C_1\cup C_2\cup C_3$ Mini can play either $u_2u_3$ or an edge containing $u_1$. If Mini does not claim $u_2u_3$ in her next move, then Max claims $u_1u_4$ with $u_4$ in an isolated edge (a legal move by Lemma \ref{duplication}!) and applies Lemma \ref{addhanging} with $v=u_1$ and $t=2$. If Mini claims $u_2u_3$, then Max claims $u_3u_4$ with $u_4$ in an isolated edge and applies Lemma \ref{addhanging} with $v=u_3$ and $t=2$.

\smallskip

 \textsc{Case C $s=10$}

 If Mini claims an edge $u_1u_2$ with $u_1\in C_1,u_2\in C_2, |C_1|= 4$ and $C_i$ nice, then Max claims $u_1u_3$ with $u_3$ in a nice triangle and then applies Lemma \ref{addhanging} with $v=u_1$, $t=3$. If Mini claims an edge $u_1u_2$ with $u_1\in C_1,u_2\in C_2, |C_1|=|C_2|= 3$, then Max claims $u_1u_3$ with $u_3\in C_3$, $|C_3|=4$ a nice component such that within $C_3$ there is a Hamiltonian path from $u_3$ to any other vertices of $C_3$. There exists such a vertex as it can be read out from the proof of Lemma \ref{nicecomp} that $C_3$ has at least 5 edges on 4 vertices. As the game is $P_{10}$-free, within $C_1\cup C_2\cup C_3$ Mini can play either $u_2u_3$ or an edge containing $u_1$. If Mini does not claim $u_2u_3$ in her next move, then Max claims $u_1u_4$ with $u_4$ in a nice triangle (a legal move by Lemma \ref{duplication}!) and applies Lemma \ref{addhanging} with $v=u_1$ and $t=3$. If Mini claims $u_2u_3$, then Max claims $u_3u_4$ with $u_4$ in a nice triangle and applies Lemma \ref{addhanging} with $v=u_3$ and $t=3$.
 \end{proof}

Now we turn our attention to the proofs of (2), (3), and (4) of Theorem \ref{pathstars}. All lower bounds follow from the following simple lemma.

\begin{lem}\label{maxlinear}
  For any $2\le \ell\le s-1$, we have $s(n,\#S_\ell,P_{s})\ge (1+o(1))\binom{s-2}{\ell-1}n $.
\end{lem}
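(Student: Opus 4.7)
I would derive Lemma \ref{maxlinear} from Lemma \ref{nicecomp}, applied with the degenerate choice $S = L = s - 1$ (which is permissible since $2 \le s - 1 < s$) and an arbitrary fixed $\varepsilon > 0$. The resulting strategy for Max ensures one of the following outcomes: either some vertex of the game graph attains degree at least $\varepsilon n / 3$ (in which case that vertex alone contributes $\binom{\varepsilon n / 3}{\ell - 1} = \Theta(n^{\ell - 1})$ copies of $S_\ell$, far exceeding the desired bound), or the game graph already contains $(1 - \varepsilon) n / (s - 1)$ vertex-disjoint Hamiltonian components of size $s - 1$, spanning $(1 - \varepsilon) n$ vertices in total.

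The heart of the argument is the second case, and the key claim is that every nice Hamiltonian component of size $s - 1$ \emph{must} end the game as a complete graph $K_{s - 1}$, no matter how the two players continue to play. Two elementary observations give this. First, any edge from a nice component $C$ to an outside vertex $u$ (whether $u$ lies in another nice component or among the ``bad'' vertices) is illegal, because $C$ contains a Hamiltonian path ending at any prescribed endpoint, and so together with the new edge it yields a path on $s$ vertices (or more). Second, any missing chord inside $C$ cannot produce a path on more than $s - 1$ vertices, so it is a legal move throughout the remainder of the game. Since the saturation game continues until no legal move remains, every chord of every nice component is eventually claimed by one of the players, turning each nice component into a $K_{s - 1}$.

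Once this structural fact is established, the counting is immediate: every vertex in a nice component has degree $s - 2$ at the end, so the $(1 - \varepsilon) n / (s - 1)$ disjoint copies of $K_{s - 1}$ contribute
\[
\frac{(1 - \varepsilon) n}{s - 1} \cdot (s - 1) \binom{s - 2}{\ell - 1} = (1 - \varepsilon) \binom{s - 2}{\ell - 1}\, n
\]
copies of $S_\ell$, and letting $\varepsilon \to 0$ yields the stated bound. The one point that merits care, and which I expect to be the main (albeit minor) obstacle, is verifying that Lemma \ref{nicecomp} really applies with $S = L$: its proof alternates odd and even stages building components of sizes $S$ and $L$, and one must inspect that no step implicitly requires $S < L$. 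The construction of a single Hamiltonian component of any prescribed size $M \ge 3$ makes no such distinction, so this reduces to a routine check.
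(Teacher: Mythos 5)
Your proposal is correct and follows essentially the same route as the paper: apply Lemma~\ref{nicecomp} with $S=L=s-1$ and $\varepsilon$ arbitrarily small, observe that no nice component of size $s-1$ can ever be joined to anything outside it (a $P_s$ would arise via a Hamiltonian path ending at the attachment vertex), and conclude that saturation forces each such component to become a clique $K_{s-1}$, which gives the stated count. The paper states this in two sentences without spelling out the dichotomy or the legality of internal chords; your write-up supplies exactly those details, and your caveat about $S=L$ in Lemma~\ref{nicecomp} is harmless since the paper itself invokes the lemma with that degenerate choice.
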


\begin{proof}
Max can apply Lemma \ref{nicecomp} with $S=L=s-1$ and $\varepsilon$ arbitrarily small to obtain pairwise vertex-disjoint Hamiltonian components of size $s-1$ that cover almost all vertices. These components cannot be joined either to each other or to other vertices as a copy of $P_s$ would be created, so at the end of the game, they should all be cliques yielding the lower bound.
\end{proof}

\begin{lem}\label{minlinear}
    For any $4\le s\le 6$, in the $P_s$-game Mini can keep all components of bounded size: 
    \begin{itemize}
        \item 
        in the $P_4$-free game, all components of size at most 3,
        \item 
        in the $P_5$-free game, all but one component of size at most 4, and the remaining component of size at most 5,
        \item 
        in the $P_6$-free game, all components of size at most 6.
    \end{itemize}
\end{lem}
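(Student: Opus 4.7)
The plan is to prove each of the three bullets by exhibiting an explicit strategy for Mini that maintains a component-wise invariant: after every Mini move every component of the game graph is a ``small, saturated'' $P_s$-free graph, and any extension Max creates is closed by Mini on her very next move.

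For the $P_4$-free game I would take the invariant \emph{every component lies in $\{K_1,K_2,K_3\}$}. Under this invariant a short enumeration shows that Max's only legal moves are (i) joining two isolated vertices into a new $K_2$, and (ii) extending a $K_2$ to a $P_3$; any other candidate edge (extending a $K_3$, joining two $K_2$'s, or joining a $K_2$ to a $K_3$) immediately creates a $P_4$. Mini closes the fresh $P_3$ to a $K_3$ in case (ii) and plays another isolated edge in case (i). The only delicate moment is when fewer than two isolated vertices remain after Max's move: Mini then extends a $K_2$ to a $P_3$, and Max's unique legal reply is to close this $P_3$ to $K_3$, after which the game ends with every component of size at most $3$.

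For the $P_5$-free game I would enlarge the invariant to \emph{every component lies in $\{K_1,K_2,K_3,\mathrm{paw},C_4,K_4-e,K_4\}$}, a family of $P_5$-free graphs on at most four vertices. Enumerating Max's moves from such a state, the only ones that leave this set are $K_2+K_1\to P_3$ and $K_2+K_2\to P_4$; Mini responds with the canonical closing moves $P_3\to K_3$ and $P_4\to C_4$ respectively, or, if Max's move already preserved the invariant, with a fresh isolated edge. When no pair of isolated vertices is available, she falls back on deepening an existing saturated component ($C_4\to K_4-e$, $K_4-e\to K_4$, or $\mathrm{paw}\to K_4-e$). A direct check confirms each closing move is legal; the single size-$5$ component permitted by the statement serves as a safety margin for the last-move boundary, where parity may force Mini's very last move to extend one component by an extra vertex before Max is obliged to close.

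For the $P_6$-free game the same template applies, but with a longer catalogue of allowed $P_6$-saturated components of size at most $6$ --- for example $K_5$, $C_5$, $K_5-e$, small book graphs, and related $P_6$-free structures admitting no legal extending edge. After cataloguing these, Mini classifies each possible extending move of Max and responds with the canonical closing move that returns the graph to the invariant set. The main obstacle, common to all three games, is bookkeeping: verifying that each of Mini's closing moves is legal (creates no $P_s$) both inside the affected component and across the rest of the graph, and that the fallback moves are always available. For $P_4$ this is immediate; for $P_5$ it reduces to a short case analysis on (touched component, Max's move); for $P_6$ the enumeration of saturated small components and their extensions I expect to be the most laborious part of the proof.
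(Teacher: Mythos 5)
Your $P_4$ case is correct and essentially matches what the paper does there. The problem is in the $P_5$ case, and it is a genuine gap rather than a bookkeeping issue: you include the paw (a triangle $abc$ with a pendant $d$ at $a$) in the set of components allowed to survive Mini's move, and your claim that the only Max moves leaving your invariant set are $K_2+K_1\to P_3$ and $K_2+K_2\to P_4$ is false. From a paw, Max can legally attach an isolated vertex $e$ to the apex $a$: any Hamiltonian path of the resulting $5$-vertex graph would need both degree-one vertices $d$ and $e$ as endpoints, which is impossible since both are adjacent only to $a$, so no $P_5$ is created. The resulting component ($K_3$ with two pendants at one vertex) admits \emph{no} legal internal edge for Mini (every non-edge $d_id_j$, $d_ib$, $d_ic$ completes a $P_5$), so she cannot saturate it, and it accepts arbitrarily many further pendants at $a$ while remaining $P_5$-free. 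Max adds one vertex to it per round while Mini can retire at most two isolated vertices per round, so this component grows to linear size, destroying the claimed bound. The fix is that Mini must treat the creation of a paw as an emergency and immediately play the edge turning it into $K_4-e$; it cannot be a mere fallback move. Your $P_6$ sketch inherits the same defect in a worse form (e.g.\ $K_4$ with any number of pendants at a single vertex is $P_6$-free and unclosable), and the catalogue-plus-case-analysis you defer there is exactly where the real work lies.

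It is worth contrasting this with the paper's strategy, which sidesteps the pendant danger structurally: Mini's first-phase invariant is that every component is \emph{Hamiltonian} (size at most $2$ or containing a Hamilton cycle). Whenever Max merges two Hamiltonian components, the union has a Hamiltonian path, so Mini can close it into a Hamiltonian component without lengthening the longest path; in particular no pendant vertex ever survives her move, which is precisely what kills the triangle-with-pendants attack. She is forced out of this phase only when the graph is a disjoint union of cliques with at most one isolated vertex, and from that highly constrained position a short case analysis (per $s$) bounds all component sizes. If you want to keep your catalogue approach, you must purge from the invariant every component possessing a vertex to which an isolated vertex can be legally attached, and verify that Mini can always restore that property in one move --- which is, in effect, rediscovering the Hamiltonian invariant.
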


\begin{proof}
   In the first phase of the strategy, Mini tries to keep all components of the game graph $H$ Hamiltonian. If Max joins two components $C_1$ and $C_2$ at least one of which is not an isolated vertex, then Mini can make the merged component Hamiltonian with a properly selected edge joining $C_1$ and $C_2$. Note, that as $C_1$, $C_2$ were Hamiltonian before Max's move, therefore after Max's move, the merged component contains a Hamiltonian path, so Mini's edge does not increase the length of the longest path, and thus it is legal in the game. If after Max's move, all components are Hamiltonian, then Mini can preserve this property, if there is a non-complete component or there are at least two isolated vertices. So Mini will be forced to move to a second phase of her strategy when the game graph is a union of pairwise vertex-disjoint cliques out of which at most one is an isolated vertex. A short case analysis depending on $s$ will finish the proof.

\smallskip

\textsc{Case I} $s=4$

If there is no isolated vertex in the game graph, then the game is over as joining two isolated edges would create a copy of $P_4$. If there is an isolated vertex, and an isolated edge, then Mini joins these, and Max can make this copy of $P_3$ a triangle, but then the game is over with all components at the end are either edges or triangles. A last subcase is when there is an isolated vertex, but no isolated edge in which case, the game is over with an induced triangle matching. 

\smallskip

\textsc{Case II} $s=5$

At the moment when Mini cannot follow her strategy from the first phase, the components of the game graph are $K_p$'s with $p\le 4$ with at most one isolated vertex. Mini will join two smallest such components. After this move, because of the $P_5$-free property, the only components that can be merged are isolated edges and possibly a single copy of $P_3$. Whenever Max connects two isolated edges, Mini will make that component a copy of $C_4$ thus making it unable to be connected to any other components. The only other option for Max is to connect an isolated edge $uv$ to the middle vertex $y$ of the $P_3$ $xyz$, say Max takes the edge $vy$. Then Mini takes the edge $uy$ and makes this component non-connectable to any other components. If later in the game, Mini has to join two isolated edges, then that copy of $P_4$ is not joinable to any further components. Therefore, at the end of the game, all components will have size at most 4 and at most one component of size 5.   

\smallskip

\textsc{Case III} $s=6$

At the moment when Mini cannot follow her strategy from the first phase, the components of the game graph are $K_p$'s with $p\le 5$ with at most one isolated vertex. Mini will join the two smallest such components. As two triangles cannot be joined, if there is at most one component of size smaller than 3 before Mini's move, then Mini joins this component to a larger one, and the final components are established, all of size at most 5. So Mini either joins an isolated vertex to an isolated edge to obtain a copy of $P_3$ or two isolated edges to obtain a copy of  $P_4$. We will call this component the \textit{current component}, and in general, there will be exactly one component, where Mini makes the first move in the second phase. Also, in this component, the players claim edges alternatingly, although not necessarily in a consecutive manner. Mini can achieve this, as Max may join two copies of $K_2$ or a copy of $K_2$ and a copy of $K_3$, but in this case, Mini makes this component Hamiltonian with 2 or 4 edges remaining to be claimed, and Mini only claims one such edge, if Max claims one. 

In the remainder of the proof, we present a short case analysis of how Mini can keep the current component of bounded size.

\smallskip

\textsc{Subcase III/A} With her first move in the second phase, Mini creates a copy of $P_4$ $uvwz$

If Max then plays an edge within this component, then Mini makes this component Hamiltonian, and as it is of size 4, it is not joinable to further components. When all edges are claimed, then Mini again joins two isolated edges. If Max joins the copy of $P_4$ to another component, then because of the $P_6$-free property, it must be an isolated edge $xy$ and the only possible connection, up to isomorphism, is the edge $xv$. Then Mini plays $xw$ and obtains a \textit{maximal} $P_6$-free component. 

\smallskip

\textsc{Subcase III/B} With her first move in the second phase, Mini creates a copy of $P_3$ $uvw$

If Max claims the edge $uw$, Mini returns to Subcase III/A. 

If Max joins a copy of $P_3$ to a triangle $xyz$, then the only way to join them (up to isomorphism) is by claiming the edge $xv$. Then, Mini can claim the edge $vy$ and after that, knowing that there are no further isolated vertices, the current component cannot be joined to further components. After that Mini gets back to Subcase III/A.

If Max joins a copy of $P_3$ to a $K_2$ such that the current component becomes a copy of $P_5$, then Mini closes this component to a copy of $C_5$, and the current component cannot be joined to further components. After claiming the remaining 5 edges of this component, Mini will be back to Subcase III/A.

Finally, suppose Max connects a copy of $P_3$ to an isolated edge $xy$ by claiming $xv$. Then  Mini plays $yu$. It can be checked that the current component cannot be joined to any further components, so after playing the remaining 5 edges of the component, Mini is back to Subcase~III/A.
\end{proof}

Now to get upper bounds in Theorem~\ref{pathstars}, parts 2, 3 and 4, we look at the cases within the proof of Lemma~\ref{minlinear}. We are interested in the set of components at the end of the game that gives the largest s. In the first case, that is an induced triangle matching, hence $s(n,\# S_3,P_4)\leq n-o(n)$. For the second case, that is a disjoint union of $K_4$'s, which gives $s(n,\# S_3,P_5)\leq 3n-o(n)$ and $s(n,\# S_4,P_5)\leq n-o(n)$. In the last case, the graph is a disjoint union of $K_p$'s, with $p \leq 5$, and possible a number of specially formed components on $6$ vertices. Hence, for the upper bound we can assume that at the end of the game we have almost a disjoint union of $K_5$'s, which gives $s(n,\# S_3,P_6)\leq 6n-o(n)$, $s(n,\# S_4,P_6)\leq 4n-o(n)$ and $s(n,\# S_5,P_6)\leq n-o(n)$. 

\section{$S_4$-saturation games}\label{deg2}

\subsection{Counting $P_3$'s}

The game graph at the end of the game is saturated and contains no copy of $S_4$, so its maximum degree is at most $2$. Hence, at most one of its connected components is not a cycle -- and it can be either an isolated edge or an isolated vertex. Counting copies of $P_3$ in such a graph with $n$ vertices, clearly, the number is either $n-2$, if there is an isolated edge, $n-1$, if there is an isolated vertex, or $n$, if all components are cycles.

To simplify the analysis of this game, we introduce some notation. We define the \emph{game deficit} as $s_i(n, \# P_3, S_4)-n$. As we concluded above, the game deficit can be either $-2$, $-1$, or $0$.

If we look at the game graph mid-game, the components that are cycles cannot be touched for the remainder of the game, and the number of copies of $P_3$ in them is equal to the number of vertices. Therefore, we can remove them from the graph for the rest of the game without influencing the deficit. 

Any connected component that is a path with at least three vertices is referred to as a \emph{long path}. Note that at any point in the game, any long path can be replaced with another (longer or shorter) long path without affecting the game deficit. The reason for that is that during the game long paths can be connected to other path-components, creating a new long path, or they can be closed into a cycle, in which case they can be removed from the game. For both of these actions, the length of the long path is irrelevant.

Therefore, looking at the game graph mid-play, the deficit of the game is uniquely determined just by knowing the following three parameters -- the number of isolated vertices $n^{(v)}$, the number of isolated edges $n^{(e)}$, and number of long paths $n^{(\ell)}$, and knowing whose turn it is. We denote such a game by $[n^{(v)}, n^{(e)}, n^{(\ell)}]_t$, where $t\in\{1,2\}$ depending on whether Max or Mini is next to play. With a slight abuse of notation, we will also use $[n^{(v)}, n^{(e)}, n^{(\ell)}]_t$ to denote the deficit of that game. Finally, if $n^{(\ell)} = 0$ we will omit it, writing $[n^{(v)}, n^{(e)}]_t$, and if both $n^{(e)}=n^{(\ell)} = 0$ we will omit them both, writing simply $[n^{(v)}]_t$. Hence, $[3,1,2]_1$ denotes the game with three isolated vertices, one isolated edge, and two long paths where Max starts, $[0, 3]_2$ denotes the game with three isolated edges on six vertices where Mini starts, and $[n]_i := s_i(n, \# P_3, S_4) - n$.

To prove Theorem~\ref{t:p3}, we need to determine $[n]_i$ for all $n$ and $i$. 

First, we present a simple but useful observation, stemming from the fact that the first player in their very first move of the game essentially has no choice -- all the potential moves are the same up to isomorphism.

\begin{obs}
    \label{o:first_move}
    $[n]_i = [n-2, 1]_{3-i}$, for every $n$ and $i$.
\end{obs}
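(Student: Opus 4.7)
The approach is a direct symmetry argument. The plan is to observe that in the position $[n]_i$ (for $n \geq 2$) the first player's only option is to claim some edge $uv$ of $K_n$; such a move is automatically legal because a single edge cannot contain a copy of $S_4$. Since the automorphism group of the edgeless graph on $n$ labelled vertices is $S_n$ and acts transitively on $E(K_n)$, any two possible first moves produce the same configuration up to relabelling: one isolated edge, $n-2$ isolated vertices, no long paths, and player $3-i$ to move.

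The remaining step is to convert this ``same up to isomorphism'' statement into an equality of deficits. I would invoke the reduction already established just before the observation: the deficit of the game from any intermediate position depends only on the triple $(n^{(v)}, n^{(e)}, n^{(\ell)})$ together with the index of the player to move next (because cycle components can be discarded and long paths can be swapped for one another without affecting optimal play). Applying this reduction to the configuration reached after the first move gives deficit $[n-2,1]_{3-i}$, and since this is by definition the value of the original game $[n]_i$, the claimed equality follows.

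There is essentially no obstacle here. The only thing one must be slightly careful about is the convention for small $n$: for $n\leq 1$ the right-hand side is not meaningful, and the statement should be read as applying to those $n$ for which the first player has a legal move, i.e.\ $n \geq 2$. This is consistent with the use of the observation in the subsequent analysis, where it is applied to reduce $[n]_i$ to a game with strictly smaller vertex count.
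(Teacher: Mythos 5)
Your argument is correct and matches the paper's reasoning exactly: the paper justifies this observation purely by noting that all possible first moves are isomorphic, yielding one isolated edge, $n-2$ isolated vertices, and the other player to move, which is precisely the position $[n-2,1]_{3-i}$ under the reduction established beforehand. Your remark about restricting to $n\ge 2$ is a sensible clarification but does not change the substance.
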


We now handle the finite cases for $n\leq 7$.
\begin{lem} \label{l:p3_finite_cases}
The values of $[n]_i$, for $n\leq 7$ and $i\in\{1,2\}$, are as given in Table~\ref{table:p3}.
\end{lem}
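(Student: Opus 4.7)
The plan is a bottom-up recursive computation of the game deficit over all states reachable within $7$ vertices. The cases $n=1,2$ are immediate: with one vertex nothing can be played, so $[1]_1=[1]_2=-1$; with two vertices the unique move is forced and the game ends in an isolated edge, giving $[2]_1=[2]_2=-2$. For $n\ge 3$ I would invoke Observation~\ref{o:first_move} to reduce $[n]_i$ to $[n-2,1]_{3-i}$, and then compute the latter recursively within the state space $[n^{(v)},n^{(e)},n^{(\ell)}]_t$.

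The engine of the proof is a list of all legal move types from a generic state, each producing a deterministic successor state under the long-path abstraction. These are: (a) join two isolated vertices, $[n^{(v)},n^{(e)},n^{(\ell)}]_t\to[n^{(v)}-2,n^{(e)}+1,n^{(\ell)}]_{3-t}$; (b) join an isolated vertex to an isolated edge, $\to[n^{(v)}-1,n^{(e)}-1,n^{(\ell)}+1]_{3-t}$; (c) join an isolated vertex to a long-path endpoint, $\to[n^{(v)}-1,n^{(e)},n^{(\ell)}]_{3-t}$; (d) join two isolated edges, $\to[n^{(v)},n^{(e)}-2,n^{(\ell)}+1]_{3-t}$; (e) join an isolated edge to a long-path endpoint, $\to[n^{(v)},n^{(e)}-1,n^{(\ell)}]_{3-t}$; (f) join two long paths, $\to[n^{(v)},n^{(e)},n^{(\ell)}-1]_{3-t}$; (g) close a long path into a cycle, removing it from the state, $\to[n^{(v)},n^{(e)},n^{(\ell)}-1]_{3-t}$. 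I would note explicitly that no other legal move exists, because every vertex in a long path other than its two endpoints has degree~$2$, every long path has exactly one non-edge joining its endpoints (closing it to a triangle or, more generally, leaving maximum degree $2$), and merging longer structures would require multiple edges.

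With these transitions in hand, I would perform the recursion bottom-up from the base cases $[0]_t=0$, $[1]_t=-1$, $[0,1]_t=-2$, and $[0,0,1]_t=0$ (the last because from a single long path the only legal move is closing it, and the resulting cycle contributes $0$ to the deficit). From these one computes $[1,1]_t$, $[2,1]_t$, $[0,2]_t$, $[1,0,1]_t$, then $[3,1]_t$, $[1,2]_t$, $[2,0,1]_t$, $[0,1,1]_t$, and so on up to $[5,1]_t$, each time taking the maximum of the successor deficits when $t=1$ and the minimum when $t=2$. Because at most $7$ vertices are in play, the total number of reachable states is small (a few dozen), and each is a simple max/min over at most seven numbers.

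The main obstacle is bookkeeping: the values of $[0,1,1]_t$, $[1,0,1]_t$, and $[2,0,1]_t$ differ between $t=1$ and $t=2$, and these asymmetries propagate through to the larger states, so one must be careful to pass the correct parity to each sub-game. To make the computation verifiable I would collect all the intermediate values into a single table, organized by $n^{(v)}+2n^{(e)}+n^{(\ell)}\le 7$, and indicate in each row the optimal move achieving the listed value. The final entries $[1,1]_t$, $[2,1]_t$, $[3,1]_t$, $[4,1]_t$, $[5,1]_t$ then yield $[n]_i=[n-2,1]_{3-i}$ for $3\le n\le 7$, and comparison with Table~\ref{table:p3} finishes the proof.
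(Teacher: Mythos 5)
Your approach is sound but genuinely different from the paper's. The paper handles each $n\le 7$ by an ad hoc pair of strategies (e.g.\ for $n=5$ with Max first: Max completes a $P_4$ to force deficit $\ge -1$, Mini completes a $P_3$ and later closes a short cycle to force deficit $\le -1$), whereas you propose a uniform bottom-up minimax computation over the abstracted states $[n^{(v)},n^{(e)},n^{(\ell)}]_t$ --- which is in fact the method the paper reserves for Lemma~\ref{l:minimax-tree}, where it only needs one-sided bounds and therefore only records Mini's replies. Your move enumeration (a)--(g) is complete: in the $S_4$-free abstraction the only playable vertices are those of degree at most one, namely isolated vertices, endpoints of isolated edges, and endpoints of long paths, and the seven unordered pairings of these (with the same-path pairing giving cycle closure) are exactly your list; your terminal states $[0]_t$, $[1]_t$, $[0,1]_t$ and the forced value $[0,0,1]_t=0$ are also right. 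I spot-checked the recursion (e.g.\ $[5]_1=[3,1]_2=\min\{[1,2]_1,[2,0,1]_1\}=-1$ and $[6]_2=[4,1]_1=\max\{[2,2]_2,[3,0,1]_2\}=-1$) and it reproduces Table~\ref{table:p3}. What each approach buys: yours is mechanical, exhaustively verifiable, and reuses the same infrastructure for Lemma~\ref{l:minimax-tree}; the paper's is shorter on the page and exposes the strategic ideas (path completion, cycle closing) that are recycled in Lemmas~\ref{l:max_path_extension} and~\ref{l:reduction_by_345}. The one thing you must still supply is the actual table of intermediate values with the optimizing move in each row --- as written, the proof is a fully specified recipe rather than a completed verification, and for a finite-case lemma the computed values are the proof.
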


\begin{proof}
    The cases $n\leq 3$ are trivial as for them the saturated graph is unique.

    For $n=4$ the deficit is 0. Max as first completes a copy of $P_4$ in his second move, and Max as second claims an isolated edge in his first move. Then in both cases, the only saturated supergraph is $C_4$.

    If $n=5$ and Max is first, he can complete a copy of $P_4$ in his second move guaranteeing $[5]_1 \geq -1$. On the other hand, Mini in her second move can complete a copy of $P_3$ and then, if Max does not close it into a 3-cycle, Mini closes the long path (with less than $5$ vertices) into a cycle in her following move, guaranteeing $[5]_1 \leq -1$.

    For $n=5$ with Max second, he claims an isolated edge in his first move and then completes a copy of $P_5$ in his second move, thus guaranteeing a zero deficit.

    If $n=6$, Max as first makes sure that after his second move one of the two cases holds: either there is a triangle, in which case the game is reduced to $[3]_2$, or there are three isolated edges, which he can complete into a copy of $P_6$ in his third move. In both cases there is a zero deficit.

    When $n=6$ and Mini is first, she can make sure that she completes a copy of $P_4$ in her second move. If Max does not close this path into a cycle (otherwise the game is reduced to $[2]_2$), there will be a long path after his move, so Mini can close the long path into a cycle (not spanning all the vertices), thus ensuring $[6]_2 \leq -1$. On the other hand, Max can complete a copy of $P_3$ in his first move, and unless Mini closes it into a triangle (in which case the game is reduced to $[3]_1$), Max can ensure a copy of $P_5$ after his second move. Thus $[6]_2 \geq -1$.

    For $n=7$, Max as first makes sure that after his second move one of the two cases holds: either there is a triangle, in which case the game is reduced to $[4]_2$, or there are three isolated edges. In the latter case Mini's next move creates a long path, that Max closes into a cycle in his following move, ensuring a zero deficit.

    If $n=7$ and Max is second, he can complete a copy of $P_3$ in his first move. Then he can make sure that after his second move, the graph is either a triangle and an isolated edge, which by Observation~\ref{o:first_move} reduces to $[4]_1$, or it is a copy of $C_4$, which reduces to $[3]_2$.
\end{proof}

We need to look at another three finite cases to complete a base for our general argument.

\begin{lem} \label{l:minimax-tree}
    $[12]_2, [11]_1, [8]_2 \leq -1$.
\end{lem}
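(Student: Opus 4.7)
The plan is to reduce all three positions via Observation~\ref{o:first_move} and then prove the three claims in the order $[8]_2$, $[11]_1$, $[12]_2$, using each as input to the next. Written post-reduction, the positions are $[6,1]_1$, $[9,1]_2$, and $[10,1]_1$; in each, Mini must force the final game graph to contain either an isolated vertex or an isolated edge.

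Mini's guiding principle will be to \emph{close a triangle at the first opportunity}: closing a long path on three vertices into a $C_3$ removes three vertices from play in a single move, and if the residual game is already known to have deficit at most $-1$, Mini wins. A secondary tactic is to \emph{create} a $P_3$ by joining an isolated vertex to an isolated edge, so as to set up a triangle-closure on Mini's next turn.

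For $[6,1]_1$: if Max extends the edge to a $P_3$, reaching $[5,0,1]_2$, Mini closes the triangle and wins via $[5]_1=-1$ from Lemma~\ref{l:p3_finite_cases}; otherwise Max creates a second isolated edge, giving $[4,2]_2$, and Mini responds by joining an isolated vertex to an edge to reach $[3,1,1]_1$. One then checks by a case split on Max's legal move types (at most five: close the $P_3$, join two isolated vertices, extend the edge to a $P_3$, extend the long path with an isolated vertex, or merge the edge into the long path) that Mini either closes a triangle immediately or merges the edge with the path, reducing to small positions such as $[1,2]_1$, $[2,0,1]_1$, or $[3,1]_2$, each of which one verifies has deficit $\leq -1$ directly. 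For $[9,1]_2$: Mini extends the edge to a $P_3$, obtaining $[8,0,1]_1$; Max's only options are to close the $P_3$ (yielding $[8]_2\leq -1$ by the first step), to create a new isolated edge (giving $[6,1,1]_2$, from which Mini closes to $[6,1]_1\leq -1$), or to extend the path (giving $[7,0,1]_2$, from which Mini extends once more to $[6,0,1]_1$, verified by one more short case split). For $[10,1]_1$: if Max extends the edge, Mini closes the triangle and reduces to $[9]_1\leq -1$ (checked exactly as inside the $[9,1]_2$ step); if Max creates a second isolated edge, reaching $[8,2]_2$, Mini merges the two edges into a $P_4$, reducing to $[8,0,1]_1\leq -1$ from the previous step.

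The main obstacle is not conceptual but combinatorial: the argument admits no slick invariant and becomes a bounded minimax search on the game tree. At each of the handful of intermediate positions ($[3,1,1]_1$, $[2,0,1]_1$, $[8,0,1]_1$, $[6,0,1]_1$, \ldots) one must enumerate all legal Max move types and exhibit a winning Mini response. The ordering of the three claims is essential, since each is invoked as a black box inside the next to cut off what would otherwise be an unbounded recursion.
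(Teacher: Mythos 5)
Your proof is correct and takes essentially the same approach as the paper: both arguments are a bounded minimax search of the game tree, reducing via Observation~\ref{o:first_move} and terminating in positions settled by Lemma~\ref{l:p3_finite_cases} (the paper simply presents the tree as a figure, while you write it out and order the three claims so each feeds the next). I checked your intermediate positions ($[3,1,1]_1$, $[2,0,1]_1$, $[8,0,1]_1$, $[6,0,1]_1$, $[1,2]_1$, etc.) and all the stated reductions hold, with only a cosmetic imprecision: in the branch where Max merges the edge into the long path, Mini's winning reply is to extend the path with an isolated vertex (reaching $[2,0,1]_1$), which is neither of the two tactics you name, though it is among the target positions you list.
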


\begin{proof}
    We show that in all three games Mini can guarantee a negative deficit, by applying the Minimax Algorithm (see, e.g.,~\cite{RN}). We need to demonstrate that for each of the three games, every possible move of Max has one response of Mini that leads to a game position with a negative deficit, recursively for the whole game tree. 

    Due to Observation~\ref{o:first_move}, it is enough to look at games $[10,1]_1$, $[9,1]_2$ and $[8]_2$. We build a game tree that covers all three games, see Figure~\ref{f:minimax}. 
    \begin{figure}[htb]
         \centering
        \includegraphics[width=0.9\textwidth]{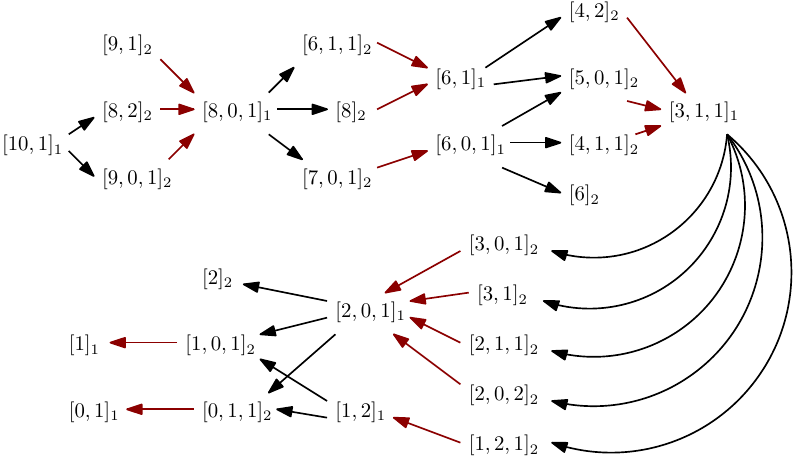}
        \caption{The Minimax game tree, giving an upper bound on the deficit for the three games in Lemma~\ref{l:minimax-tree}.}
        \label{f:minimax}
    \end{figure}
    The chosen moves of Mini that guarantee a negative deficit are denoted by red arrows. Note that in all the leaves of the game tree there are games with a negative deficit, either by Lemma~\ref{l:p3_finite_cases} and/or trivially. This concludes the proof.
    \end{proof}

\begin{lem} \label{l:max_path_extension}
    Let $n\geq 8$. If $n$ and $i$ have different parity, then $[n]_i = 0$. If $n$ and $i$ have the same parity, then $[n]_i \geq -1$.
\end{lem}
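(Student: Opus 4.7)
The plan is to prove both parts of Lemma \ref{l:max_path_extension} simultaneously by strong induction on $n \ge 8$. The upper bound $[n]_i \le 0$ in the different-parity case is automatic, since the deficit is never positive; the substance is a lower bound, for which I would exhibit a strategy for Max achieving deficit $0$ (different parity) or at least $-1$ (same parity).

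First I would dispose of the base cases $n\in\{8,9,10,11,12\}$. Three of the same-parity upper bounds are already given by Lemma \ref{l:minimax-tree}; the matching lower bounds and all different-parity base values can be verified by direct Minimax analysis on small game trees, exactly in the spirit of the proof of Lemma \ref{l:minimax-tree}. For the inductive step $(n \ge 13)$, Max's strategy follows a single uniform recipe: \emph{whenever a long path is on the board at Max's turn, close it with move G}. Each such closure reduces the game to some state of the form $[n-k]_j$, and the key bookkeeping is to show that the parity relation between $n-k$ and $j$ is the same as that between $n$ and $i$, so that the inductive hypothesis can be applied.

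By Observation \ref{o:first_move}, after the (unique up to isomorphism) first move of creating an isolated edge, the game is in state $[n-2, 1]_{3-i}$, and the next player has only two options: move A (create another isolated edge) or move B (extend the existing edge to a $P_3$). If she plays B, Max plays G on his turn to reach $[n-3]_{3-i}$; note that $n-3$ and $3-i$ have the same parity relation as $n$ and $i$ (both indices flip parity), so the inductive hypothesis directly gives the required value. If she plays A, we reach $[n-4, 2, 0]$ with Max to move; Max plays C, merging the two isolated edges into a $P_4$, producing $[n-4, 0, 1]$ with the opponent to move. Whichever move she then chooses (A, E, or G), within at most one additional round Max plays G and reduces the game to $[n-k]_j$ with $k \in \{4, 5\}$, or via Observation \ref{o:first_move} to a state identified with $[n-4]_{j'}$. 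In every sub-branch the parity relation is preserved, so induction finishes.

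The main obstacle is the case analysis and parity tracking: Max must never choose a move that lands the game in a position intrinsically doomed to deficit $-2$, such as $[2, 0, 0]_j$, $[0, 1, 1]_2$, or $[2, 2, 0]_2$. The hardest branch is the one in which Mini consistently refuses to create long paths by always choosing move A; Max then has to string together two or three forced responses before a long path appears and can be closed. A short auxiliary classification of the values of small endgame positions $[n', a, b]_j$ with $a+b \le 3$ would considerably streamline this analysis.
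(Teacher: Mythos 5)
Your overall shape (induction on $n$ with reductions to smaller games, plus Minimax verification of base cases) inverts the paper's route: there, Max never closes a path himself but \emph{extends} a single path throughout, maintaining the invariant that after each of his moves the board consists of cycles plus exactly one path; whatever Mini does (close, extend, or play an isolated edge) Max restores the invariant, and the whole game collapses to a handful of explicit endgames once fewer than seven isolated vertices remain. Closing paths to shrink the game is what the paper has \emph{Mini} do (in the reduction lemma for the upper bounds). The two approaches are genuinely different, but yours has a gap that the paper's design specifically avoids.

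The gap is in the branch you yourself flag as hardest, and it is not merely a matter of bookkeeping. First, ``whenever a long path is on the board at Max's turn, close it'' is not a strategy: it says nothing about the turns on which no long path exists, which includes Max's very first move when $i=2$ and, in the worst branch, roughly every other turn. Second, if Mini keeps playing isolated edges, the game does not reduce ``within one additional round'' to a position of the form $[n-k]_j$ or $[n-k,1]_{j'}$; instead you are pushed through mid-game positions $[m,a]_j$ with $a\geq 2$, which are neither initial positions (so the inductive hypothesis does not apply) nor small endgames. Concretely, following the natural completion of your recipe from $[n]_2$ with $n\equiv 2\pmod 4$ (a same-parity case, target deficit $\geq -1$), the loop ``Mini plays an isolated edge, Max merges two edges into a $P_4$, Mini plays an isolated edge, Max closes the $C_4$'' terminates at $[0,3]_1$: three isolated edges, Max to move, no isolated vertices. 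There Max must join two of the edges, Mini closes the resulting $P_4$ into a $C_4$, and the third edge is stranded --- deficit $-2$. Escaping this forces Max to absorb isolated edges into a growing path rather than close it, which is exactly the path-extension strategy; once you adopt it, the induction and the ten Minimax base cases for $8\leq n\leq 12$ become unnecessary, since the invariant runs uniformly until at most six isolated vertices remain and only the paper's short list of terminal positions needs checking.
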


\begin{proof}
    From the beginning of the game, Max will follow the \emph{path extension strategy}, making sure that after each of his moves the played edges form a number of cycles, possibly none, and exactly one path of positive length. Let us first show that, as long as there are at least two isolated vertices in the graph, he can follow his strategy. Indeed, independent of who starts the game, Max in his very first move can create a single path, either a copy of $P_2$ or a copy of $P_3$. Then, if we assume that Max followed this strategy for a number of rounds, in her following move Mini can do one of three things: close a cycle, in which case Max claims an isolated edge, extend the path, in which case Max extends the path, or claim an isolated edge, in which case Max joins the isolated edge to the path, thus again extending it.

    Max follows the path extension strategy until the number of isolated vertices drops below $7$. If $n$ and $i$ originally had different parity, then there are exactly six isolated vertices at that point, and the game is reduced to either $[6,1,0]_2$ or $[6,0,1]_2$. In $[6,1,0]_2$ Max can guarantee that after his move there is one copy of $P_4$ and $4$ isolated vertices. Whatever Mini plays, Max can reduce the game to either $[3]_2$ or $[2,1]_2 = [4]_1$, by Observation~\ref{o:first_move}. In both cases the game deficit is zero. In $[6,0,1]_2$ Mini has several options. She can close the long path into a cycle, reducing the game to $[6]_1$. Alternatively, she can claim an isolated edge, to which Max can respond by closing the long path into a cycle, reducing the game to $[4,1,0]_2 = [6]_1$, by Observation~\ref{o:first_move}. Finally, she can add one edge to the long path, to which Max can again respond by closing the long path into a cycle, reducing the game to $[5]_2$. In all cases the game deficit is zero.

    If $n$ and $i$ originally had the same parity, when Max stops following the path extension strategy there are exactly five isolated vertices, and the game is reduced to either $[5,1,0]_2$ or $[5,0,1]_2$. By Observation~\ref{o:first_move}, the deficit of the first one is $[7]_1$, which is zero. In $[5,0,1]_2$, Mini has three options. She can close the path into a cycle, reducing the game to $[5]_1$. Or she can extend the path, in which case Max closes that path and we have $[4]_2$. Finally, if Mini plays an isolated edge, Max closes the path into a cycle reducing the game to $[3,1,0]_2 = [5]_1$, by Observation~\ref{o:first_move}. For all three options, the game deficit is at least $-1$.
\end{proof}

\begin{lem}\label{l:reduction_by_345}
    Let $\ell\geq 3$. We have $[2\ell]_2 \leq \max\{[2\ell -4]_2, [2\ell -5]_1 \}$ and $[2\ell +1]_1 \leq \max\{[2\ell -2]_2, [2\ell -3]_1 \}$.
\end{lem}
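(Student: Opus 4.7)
The plan is to exhibit, in each part, an explicit Mini strategy that closes a small cycle within at most five moves, freezes that cycle out of play, and leaves a position isomorphic to one of the two subgames on the right-hand side. The key is that once $k$ vertices are packaged into a $C_k$-component they contribute exactly $k$ to the $P_3$-count and cannot be touched again, so the remaining position is a clean $[n-k]_i$-game whose turn parity is determined by how many moves were spent forming the cycle.

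For the first inequality, Mini's (forced) first move leads to $[2\ell-2,1]_1$. Max's only non-isomorphic replies are to claim a new isolated edge (giving $[2\ell-4,2]_2$) or to extend the existing edge into a $P_3$ (giving $[2\ell-3,0,1]_2$). In the first case Mini welds the two isolated edges into a $P_4$; in the second case she extends the $P_3$ to a $P_4$. Either way, after three moves the position is $[2\ell-4,0,1]_1$. At this pivotal state, $S_4$-freeness forbids any edge touching an interior vertex of the $P_4$, so Max has exactly three legal replies: (A) close the $P_4$ into a $C_4$, reaching $[2\ell-4]_2$; (B) extend the $P_4$ to a $P_5$, to which Mini responds by closing to $C_5$, reaching $[2\ell-5]_1$; or (C) claim a new isolated edge, to which Mini responds by closing the $P_4$ to $C_4$, reaching $[2\ell-6,1,0]_1=[2\ell-4]_2$ by Observation~\ref{o:first_move}. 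Every branch therefore ends with deficit at most $\max\{[2\ell-4]_2,[2\ell-5]_1\}$.

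The second inequality is handled by a completely parallel argument. Max's forced first move in $[2\ell+1]_1$ produces $[2\ell-1,1]_2$, after which Mini extends the edge into a $P_3$, producing $[2\ell-2,0,1]_1$. Again by $S_4$-freeness, Max has exactly three legal replies from this state: close the $P_3$ to a $C_3$ (yielding $[2\ell-2]_2$); extend the $P_3$ to a $P_4$, to which Mini responds by closing into $C_4$ (yielding $[2\ell-3]_1$); or claim a new isolated edge, to which Mini responds by closing the $P_3$ to $C_3$ (yielding $[2\ell-4,1,0]_1=[2\ell-2]_2$ by Observation~\ref{o:first_move}). Every branch again ends with deficit at most $\max\{[2\ell-2]_2,[2\ell-3]_1\}$.

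The only genuine step of care is the correct enumeration of Max's legal replies at the two pivotal intermediate positions $[2\ell-4,0,1]_1$ and $[2\ell-2,0,1]_1$, but this is forced by the degree-two cap (any edge touching an interior vertex of the long path would create an $S_4$), so no hidden branches are missed. The hypothesis $\ell\geq 3$ is exactly what guarantees that enough isolated vertices remain for every extension and closure performed in the strategy, and nothing else in the argument requires it.
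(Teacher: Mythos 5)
Your proof is correct and follows essentially the same route as the paper's: Mini builds a $P_4$ (respectively a $P_3$) by her second (respectively first) move, and then the three possible replies of Max — close, extend-then-Mini-closes, or play an isolated edge then Mini closes and invokes Observation~\ref{o:first_move} — give exactly the two subgames on the right-hand side. Your version only spells out in more detail the two ways the pivotal position $[2\ell-4,0,1]_1$ can arise, which the paper leaves implicit.
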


\begin{proof}
    If $n=2\ell$ and Mini is first to play, she makes sure to complete a copy of $P_4$ after her second move. Then Max can play the following three moves. If he closes the path into a cycle, the game reduces to $[2\ell -4]_2$. If he claims an isolated edge, Mini closes the copy of $P_4$ into a cycle, thus reducing the game to $[2\ell -6,1]_1 = [2\ell -4]_2$, by Observation~\ref{o:first_move}. Finally, if he extends the path into $P_5$, Mini closes it into a cycle reducing the game to $[2\ell -5]_1$.

    If $n=2\ell+1$ and Mini is second to play, she completes a copy of $P_3$ in her first move. Then similarly to the previous case, Max can play the following three moves. If he closes the path into a cycle, the game reduces to $[2\ell -2]_2$. If he claims an isolated edge, Mini closes the copy of $P_3$ into a cycle, thus reducing the game to $[2\ell -4,1]_1 = [2\ell -2]_2$. Finally, if he extends the path into $P_4$, Mini closes it into a cycle reducing the game to $[2\ell -3]_1$.
\end{proof}

We are ready to wrap up the proof of the main theorem.

\begin{proof}[Proof of Theorem~\ref{t:p3}]
All the cases $n\leq 7$ are covered by Lemma~\ref{l:p3_finite_cases}.

Furthermore, for all the cases where $n\geq 8$ and $n$ and $i$ are of the same parity, we have that $[n]_i \leq -1$, either by Lemma~\ref{l:minimax-tree} or by recursively applying Lemma~\ref{l:reduction_by_345}. When this conclusion is coupled with the second part of Lemma~\ref{l:max_path_extension} we get $[n]_i = -1$ in that case.

Finally, when $n\geq 8$ and $n$ and $i$ are of different parity, we have $[n]_i = 0$ by the first part of Lemma~\ref{l:max_path_extension}. This concludes the proof of the theorem.
\end{proof}

\subsection{Counting $P_4$'s}\label{subs3.2}

The case of counting $P_4$'s is in many ways similar to the one of counting $P_3$'s in the same setting, and much of our setup and approach from the previous section can be used here as well. Having this in mind, along with the fact that we do not aim at determining the exact values of the score for all $n$ but rather some non-trivial upper and lower bounds, we will present proofs in a more condensed fashion.

Let us first observe that in a saturated $S_4$-free graph the numbers of copies of $P_3$ and $P_4$ are the same on all connected components except on triangles, where each such component has three copies of $P_3$ and zero copies of $P_4$. One immediate consequence follows.
\begin{obs} \label{o:p4-less-than-p3}
    $s_i(n, \# P_4, S_4) \leq s_i(n, \# P_3, S_4)$.
\end{obs}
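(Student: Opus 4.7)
The plan is a pointwise comparison at the end of the game, followed by a standard monotonicity argument through the game tree. First, for any $S_4$-free saturated graph $G$ on $n$ vertices I would observe directly that $\#P_4(G)\le \#P_3(G)$. Indeed, $G$ has maximum degree at most $2$, so each component is a cycle or an isolated edge or an isolated vertex, with at most one non-cycle component; and a direct count gives $\#P_4(C_k)=k=\#P_3(C_k)$ for $k\ge 4$, $\#P_4(C_3)=0<3=\#P_3(C_3)$, while $\#P_4=\#P_3=0$ on isolated edges and on isolated vertices. Summing over components yields the claim (with the gap equal to three times the number of triangles of $G$, as already noted in the paragraph preceding the observation).

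Next, I would propagate this endgame inequality to the min--max value of the game by backward induction on the game tree. The key point is that the two games $(K_n,S_4)$ with scoring function $\#P_3$ or $\#P_4$ share the \emph{same} game tree: the set of legal moves at any state depends only on the $S_4$-free saturation constraint and not on the choice of $H$. Writing $v^H(\sigma)$ for the optimal $\#H$-score of the remainder of the game starting from state $\sigma$ with the designated player to move, I claim $v^{P_4}(\sigma)\le v^{P_3}(\sigma)$ for every $\sigma$. At a leaf (a saturated position) this is exactly the inequality above. At an internal node where it is Max's turn, if $c^\ast$ is any child maximising $v^{P_4}$, then by the inductive hypothesis
\[
v^{P_4}(\sigma)=v^{P_4}(c^\ast)\le v^{P_3}(c^\ast)\le \max_c v^{P_3}(c)=v^{P_3}(\sigma).
\]
At a Mini node the same conclusion follows from the monotonicity of $\min$: picking $c^\ast$ to minimise $v^{P_3}$ gives $v^{P_4}(\sigma)\le v^{P_4}(c^\ast)\le v^{P_3}(c^\ast)=v^{P_3}(\sigma)$. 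Applied at the initial position with the appropriate player to move, this yields $s_i(n,\#P_4,S_4)\le s_i(n,\#P_3,S_4)$.

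There is no substantive obstacle to this plan; the argument is a clean induction and the only point that needs acknowledgement is that the two game trees really do coincide, which is immediate from the fact that the $S_4$-free constraint does not involve $H$. Note that the optimal strategies for the two games can well differ, so the endgame graphs under optimal play may not be the same; the inequality is obtained state by state and so it directly compares the two equilibrium values without any need to analyse those strategies.
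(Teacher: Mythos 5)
Your proposal is correct and follows essentially the same route as the paper: the paper's entire justification is the preceding remark that in a saturated $S_4$-free graph every component contributes equally many $P_3$'s and $P_4$'s except triangles, which contribute three $P_3$'s and no $P_4$'s, after which the inequality on game values is declared immediate. You merely make explicit the routine backward-induction step (shared game tree, monotonicity of $\min$ and $\max$) that the paper leaves implicit.
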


It will again be useful to define the \emph{deficit of the game} as $s_i(n, \# P_4, S_4)-n$. We will show that this value is between $-1$ and $-3$, except for several small values of $n$. Also, we use a short notation $[n]_i$ to denote the game on $n$ vertices, with $i$ being $1$ or $2$ depending if Max or Mini is first to play. With slight abuse of notation, we also use $[n]_i$ to denote the game deficit under optimal play.

Clearly, if during the game there is a cycle with at least four edges it can be removed from the game, along with its vertices, without affecting the deficit in the rest of the game.

We first handle some cases of small $n$.
\begin{lem} \label{l:p4-finite-cases}
    For $n\leq 3$ we have $[n]_i = -n$. Also, $[4]_1 = [4]_2 = [5]_2=0$, $[5]_1\geq-1$, and $[8]_1=[9]_2 \leq -2$.
\end{lem}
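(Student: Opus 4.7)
The cases $n\le 3$ are immediate: a $P_4$ requires four vertices, so the game graph contains no $P_4$ and $s_i(n,\#P_4,S_4)=0$, giving $[n]_i=-n$.

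For $n=4$ and $[5]_2$, I would first enumerate the saturated $S_4$-free graphs on these vertex counts: on $4$ vertices, only $C_4$ (with $4$ copies of $P_4$) and $K_3\cup K_1$ (with $0$); on $5$ vertices, only $C_5$ ($5$), $C_4\cup K_1$ ($4$), and $C_3\cup K_2$ ($0$). I would then exhibit a Max strategy that avoids the triangle-containing outcomes: Max opens with (or responds with) a \emph{disjoint} isolated edge, never creating an adjacent $P_3$. A short branch check over the at most two remaining rounds shows Max can always force a $P_4$ (resp.\ $P_5$) whose only legal closure yields $C_4$ (resp.\ $C_5$ or $C_4\cup K_1$), giving deficit $0$ in all three cases.

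For $[8]_1\le -2$, I would follow the Minimax-tree template of Lemma~\ref{l:minimax-tree} and describe Mini's strategy: after Max's opening isolated edge, Mini extends it adjacently to a $P_3$, reaching state $P_3+5K_1$ with Max to move. I would then verify each essentially distinct Max response. If Max closes the $P_3$ to a triangle, the residual game on $5$ isolated vertices has value $[5]_2=0$ from the previous part, so the final graph is $C_3$ plus a $5$-vertex deficit-$0$ subgraph, giving total deficit $\le -3$. If Max extends the $P_3$ to a $P_4$, Mini further extends to a $P_5$, and against each of Max's follow-ups (close $P_5$, extend to $P_6$, or play a disjoint isolated edge) Mini can force a saturated end state containing at most $6$ copies of $P_4$, such as $C_5+C_3$ or $C_6+K_2$. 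If Max instead plays a disjoint isolated edge, Mini closes her $P_3$ to a triangle herself, again locking in deficit $\le -3$ via the reduced game. The case $[9]_2$ is handled analogously, with Mini opening the isolated edge herself and then extending Max's next edge into a $P_3$ whenever possible; every branch terminates in a subgame on $\le 7$ vertices already resolved here or reducible via an analogue of Observation~\ref{o:first_move}.

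The main obstacle I anticipate is the $P_4$-extension branch, where Max may try to absorb Mini's components into a single long cycle of length $7$ or $8$; controlling it requires Mini to time her closing move on the long path so that at least one triangle or a $K_2$ attached to a $C_6$ survives. Since every branch eventually reduces to a subgame on at most $5$ vertices already settled in this lemma, the whole case analysis is finite and fits into a modest game-tree figure in the spirit of Figure~\ref{f:minimax}.
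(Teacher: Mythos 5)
Your overall approach is the same as the paper's: trivial counting for $n\le 3$, a short forcing argument for $n\in\{4,5\}$, and for $[8]_1$, $[9]_2$ a Mini strategy that builds a $P_3$, forces Max to extend it (on pain of a triangle, which alone gives deficit $\le -3$ since every component of a saturated $S_4$-free graph contributes nonpositive deficit), grows the path to length $5$ or $6$, and closes it into a cycle so that a triangle or a leftover $K_2$ survives. Your branch analysis for $[8]_1$ (close the triangle / extend / play a disjoint edge) is in fact slightly more explicit than the paper's, and your reduction of the triangle branch to $[5]_2=0$ is fine.

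There is, however, one concrete error in the $[5]_2$ part. You allow the game to end in $C_4\cup K_1$ and assert this gives deficit $0$; but $C_4\cup K_1$ contains only $4$ copies of $P_4$ on $5$ vertices, so its deficit is $-1$, which would only prove $[5]_2\ge -1$, not $[5]_2=0$. The point is that Max must actually force a $P_5$ (not merely a $P_4$ plus an isolated vertex): if only a $P_4$ is on the board with a spare vertex, Mini closes the $C_4$ and strands the fifth vertex. The paper's argument is that Max, playing second, answers Mini's opening edge with a disjoint edge and then, whatever Mini does, can complete a $P_5$ on his second move; a $P_5$ has $C_5$ as its unique saturated supergraph (any chord creates a degree-$3$ vertex), giving deficit $0$. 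You do claim Max "can always force a $P_5$", so the fix is simply to delete the $C_4\cup K_1$ alternative and verify the two-move forcing of the $P_5$; as written, though, the step does not establish $[5]_2=0$.
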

\begin{proof}
    The cases $n\leq 3$ are trivial. 
    
    On $4$ vertices, depending on who starts, Max can create either a copy of $P_4$ or two disjoint edges, and in both cases, the only saturated supergraph is a copy of $C_4$. 
    
    On $5$ vertices, Max as second claims an isolated edge in his first move, and then creates a copy of $P_5$ in his second move. Max as first in his second move creates a copy of $P_4$ and thus ensures that the deficit is at least $-1$.

    In the game $[8]_1$ Mini right away creates a copy of $P_3$, and Max has to extend it to avoid the creation of a triangle. Then Mini further extends it, getting to three isolated vertices and a long path. From there, all possible moves result in a deficit of at least $-2$ as Mini will close the long path to a cycle.

    In the game $[9]_2$, Max has to claim an isolated edge in his first move to avoid the creation of a triangle right away. If Mini then extends that path into a copy of $P_3$, Max has to play adjacent to it. In any case Mini can complete a copy of $P_6$ in her next move, and make sure that the long path is closed into a cycle in the round that follows. 
\end{proof}

We now show that Max can prevent the deficit from going below $-3$.
\begin{lem} \label{l:p4-lower-bd}
    We have $[n]_i \geq -3$, for all $n$ and $i$.
\end{lem}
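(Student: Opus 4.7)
The plan is to describe a strategy for Max that guarantees the final deficit is at least $-3$. Since the final graph (being saturated $S_4$-free) is a disjoint union of cycles and at most one isolated vertex or isolated edge, and since only triangle components contribute to the deficit among cycles (each triangle contributing $-3$ on $3$ vertices, while a $C_k$ with $k\geq 4$ contributes $0$), the goal reduces to ensuring that at most one triangle is formed throughout the game and, if one is, no additional small component remains at the end. Equivalently, writing the final deficit as $-3t - s$ with $t$ the number of triangles and $s\in\{0,1,2\}$, Max wants to rule out $(t,s)$ with $3t+s \geq 4$.

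Max's strategy is a refinement of the path extension strategy from Lemma~\ref{l:max_path_extension}: he maintains the invariant that, after each of his moves, the game graph consists of cycles of length at least $4$, isolated vertices, and a single additional non-cycle component, which is either an isolated edge ($P_2$) or a path on at least $4$ vertices---in particular, never a $P_3$. Since no $P_3$ is present after Max's move, Mini cannot close one into a triangle on her turn. Max's responses mirror the earlier strategy: if Mini closes the active path (necessarily into a $C_k$ with $k\geq 4$), Max starts a fresh $P_2$ using two isolated vertices; if Mini extends the active path by one vertex (possibly turning a $P_2$ into a $P_3$ transiently), Max extends it by one more vertex so that its length becomes at least $4$; if Mini plays a disjoint edge, Max merges it into the active path. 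When Mini plays first, Max opens with a $P_2$ disjoint from hers, and within one or two further rounds he restores the single-active-path invariant regardless of Mini's intermediate play.

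The invariant can fail only in the endgame, once isolated vertices run out. The sole scenario that forces a triangle is the following: Max has just played a fresh $P_2$ with exactly one isolated vertex remaining; Mini uses that vertex to extend it to a $P_3$; Max, now with no isolated vertex left and no other non-cycle components (by the invariant), has no choice but to close the $P_3$ into a $C_3$. Crucially, after this closure every vertex has degree $2$, so the game ends immediately; the final graph is a disjoint union of cycles of length $\geq 4$ together with exactly one triangle, yielding deficit exactly $-3$. In every other branch the strategy eventually closes any remaining active path into a cycle of length $\geq 4$ and leaves at most one small component (isolated vertex or edge) without ever creating a triangle, giving a deficit of at least $-2$.

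The main technical obstacle is the careful endgame case analysis, in particular verifying that when the forced-triangle configuration arises it truly terminates the game---no orphan isolated vertex or edge can survive alongside the triangle---and handling the Mini-first opening, where Max's first response leaves two disjoint $P_2$'s and the invariant must be restored by absorbing Mini's next move (whether a merger, a disjoint edge, or an extension that must be answered by joining $P_3$ and $P_2$ into a $P_5$).
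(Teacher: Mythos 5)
Your overall approach is the same as the paper's: Max plays a path-extension strategy, the only danger is a $P_3$ being closed into a triangle, and the worst case is a single forced triangle that ends the game at deficit exactly $-3$. For the Max-first case your invariant (the active path is a $P_2$ or has at least $4$ vertices after Max's move) is equivalent to the paper's observation that the active path always has an even number of vertices after Max's move, and that half of your argument is sound.

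The gap is in the Mini-first case. Your claim that Max ``restores the single-active-path invariant within one or two further rounds regardless of Mini's intermediate play'' is false: after Max's opening disjoint edge there are two $P_2$-components, and if Mini answers every merge of Max by playing a fresh disjoint edge, the number of non-cycle path components never drops below two (Mini adds one component per round, Max's merge removes one), so the stated invariant is never restored. Since your endgame analysis --- in particular the claim that the configuration you describe is ``the sole scenario that forces a triangle'' --- is derived from that invariant, it does not cover the positions that actually arise, e.g.\ the position consisting of cycles, one $P_2$ and one isolated vertex with \emph{Max} to move, where Max himself is forced to create the $P_3$ that Mini then closes. The bound survives (a weaker invariant --- at most two non-cycle path components, none of them a $P_3$, after each of Max's moves --- still forces any triangle to be the terminal move of the game), but you have not proved that weaker invariant either. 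The paper avoids this issue differently: as second player Max opens with an isolated edge, steers his second move to produce either a $P_5$ (after which the parity/path-extension argument applies) or a $P_4+P_2$, and disposes of the latter by a short case analysis that falls back on the finite cases $n\le 5$, $n=8$, $n=9$ settled in Lemma~\ref{l:p4-finite-cases}. You need either that kind of reduction or an explicit proof of the corrected invariant for the Mini-first game.
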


\begin{proof}
    If Max is first, he follows the path extension strategy similar to the one presented in the proof of Lemma~\ref{l:max_path_extension}. If Mini ever decides to close the path into a cycle, it will be an even cycle and thus not a triangle. Max keeps following the path extension strategy as long as there are isolated vertices, and that clearly ensures a deficit of at least $-3$.

    If Max is second, he claims an isolated edge in his first move. Then after his second move, he can make sure that the played edges form either a copy of $P_5$ or a $P_4 + P_2$. In the former case, he applies the path extension strategy, and if Mini ever completes a cycle Max proceeds as first on the remaining isolated vertices. In the latter case, Mini can either create a copy of $C_4$, or Max can respond by creating a $C_4 + P_2 + P_2$, or Max can respond by creating a copy of $P_7$. If a copy of $P_7$ is created, Max simply follows the path extension strategy similarly as above, and if instead $C_4 + P_2 + P_2$ is created, one can ignore $C_4$ and hence proceed inductively back to the initial condition of this case.
 
\end{proof}

\begin{lem} \label{l:mini-extends-matching}
    For $\ell\geq 0$, we have $[4\ell+3]_2 = -3$.
\end{lem}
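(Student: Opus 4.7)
The plan is to establish the upper bound $[4\ell+3]_2 \leq -3$ by describing a strategy for Mini that forces the final game graph to contain at least one triangle; the matching lower bound is already provided by Lemma~\ref{l:p4-lower-bd}. Since any saturated $S_4$-free graph is a vertex-disjoint union of cycles together with at most one isolated edge or isolated vertex (but not both), and each $C_m$ with $m \geq 4$ contributes $0$ to the deficit while a $K_3$ contributes $-3$, an isolated vertex $-1$, and an isolated edge $-2$, the mere presence of a single triangle guarantees deficit at most $-3$.

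Mini's strategy follows a priority rule on each of her turns: (i) if there is a $P_3$ component, close it to a $K_3$; (ii) else, if there is a $P_4$ component, close it to a $C_4$ (whose vertices now have degree $2$ and become inert for the rest of the game); (iii) else, play an isolated edge if at least two isolated vertices remain. I would prove by induction on $\ell$ that this strategy forces a triangle, with the base case $\ell = 0$ given by Lemma~\ref{l:p4-finite-cases}.

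Under this strategy, the non-inert part of the graph (ignoring completed cycles) always consists of isolated vertices and isolated edges only, because every $P_3$ or $P_4$ that appears on Max's turn is immediately processed by Mini. Denote such a state by $(a,b)$ with $a$ isolated vertices and $b$ copies of $K_2$; after Mini's opening move we reach $(4\ell+1, 1)$ with Max to move. The only moves for Max that do not immediately hand Mini a triangle (by creating a $P_3$) are (A) play an isolated edge, reaching $(a-2, b+1)$, and (B) join two $K_2$'s into a $P_4$ (requires $b \geq 2$), to which Mini responds by closing the $P_4$ to $C_4$, yielding the state $(a, b-2)$ together with a new inert $C_4$.

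The hard part is to verify that Max cannot indefinitely avoid creating a $P_3$. If Max always plays (A), then after $r$ rounds of iso-edge play the active state is $(4\ell+3-2r, r)$, and at $r = 2\ell+1$ it equals $(1, 2\ell+1)$ with Max to move. Move (A) is then unavailable, and Max must play (B), sending the state to $(1, 2\ell-1)$ with Max to move; iterating (B) preserves the odd parity of $b$, eventually yielding state $(1,1)$ with Max to move, where Max's only legal move creates a $P_3$ that Mini closes to $K_3$. Alternatively, if Max plays (B) earlier at some state $(a, b, \text{Max's turn})$ with $a+2b = 4\ell+3$ and $b \geq 2$ odd, then after Mini's $C_4$ closure the active state $(a, b-2)$ satisfies $a + 2(b-2) = 4(\ell-1)+3$ and coincides exactly with a state reachable during Mini's iso-edge play in the $[4(\ell-1)+3]_2$ game (the iso-vertex count, $K_2$ count, and turn all match), so the inductive hypothesis forces a triangle from there as well, completing the argument.
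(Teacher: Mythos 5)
Your proposal is correct and follows essentially the same strategy as the paper's proof: Mini extends the matching with isolated edges, closes any $P_4$ into a $C_4$, and punishes any $P_3$ by closing it into a triangle, with parity forcing Max into the losing endgame of one isolated vertex plus one isolated edge. Your explicit state-based induction on $\ell$ just fills in the detail behind the paper's one-line ``due to parity'' argument, and it does so correctly.
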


\begin{proof}
    We present a strategy for Mini that will ensure the creation of a triangle. Together with Lemma~\ref{l:p4-lower-bd}, this will complete the proof.

    Throughout the game, Mini extends the matching of the game graph by playing isolated edges. Note that to avoid the creation of a triangle, Max cannot connect an isolated edge with an isolated vertex. If he ever connects two isolated edges to create a copy of $P_4$, Mini closes it into a copy of $C_4$ and removes it from the game. Due to parity, towards the very end of the game there will be only one isolated edge and one isolated vertex, with Max to play, and the triangle will eventually be created.
\end{proof}

\begin{lem} \label{l:p4-recursion-mini}
    We have $[n]_1\leq [n-4]_1$, and $[n]_2\leq \max \{ [n-4]_2, [n-5]_1\}$.
\end{lem}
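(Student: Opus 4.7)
My plan is to give an explicit strategy for Mini in each of the two games, following the template of Lemma~\ref{l:reduction_by_345}. The main structural observation I will use is that every cycle $C_k$ with $k\ge 4$ contains exactly $k$ copies of $P_4$ and therefore contributes $0$ to the deficit once it is frozen, whereas a $C_3$ contributes $-3$. I will also invoke the global bounds $[n]_i\le 0$ (from Observation~\ref{o:p4-less-than-p3} combined with Theorem~\ref{t:p3}) and $[n]_i\ge -3$ (Lemma~\ref{l:p4-lower-bd}).

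\emph{For the second inequality.} Mini will open with $e_1=v_1v_2$. Up to isomorphism, Max's reply is then either $v_2v_3$ (forming a $P_3$) or a disjoint edge $v_3v_4$, and in either case Mini can complete a $P_4$ on $\{v_1,v_2,v_3,v_4\}$ with her second move by playing the missing edge. The degree-$2$ constraint will then force Max's move~$4$ into exactly three options: close the $C_4$ with $v_1v_4$ (reducing the game to $[n-4]_2$); extend the $P_4$ at an endpoint to a $P_5$, after which Mini closes a $C_5$ on her move~$5$ and reduces the game to $[n-5]_1$; or play an isolated edge outside $\{v_1,\dots,v_4\}$, after which Mini closes the $C_4$ herself and reduces the game, via Observation~\ref{o:first_move}, to $[n-6,1]_1=[n-4]_2$.

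\emph{For the first inequality.} After Max's opening $e_1=v_1v_2$, Mini will respond with $v_2v_3$, forming a $P_3$. Since $v_2$ now has degree $2$, the legal third moves of Max fall into three classes. If Max closes a triangle with $v_1v_3$, three vertices are frozen with deficit $-3$ and the residual game is $[n-3]_2$. If Max extends the $P_3$ to a $P_4$ at either endpoint, Mini closes the induced $P_4$ to a $C_4$ on move~$4$, reducing the game to $[n-4]_1$. If Max plays an isolated edge outside $\{v_1,v_2,v_3\}$, Mini closes the triangle with $v_1v_3$ on move~$4$; by Observation~\ref{o:first_move} the residual game is $[n-5,1]_1=[n-3]_2$. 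In the two triangle branches the total deficit is $-3+[n-3]_2\le -3\le [n-4]_1$, while in the $C_4$ branch it is exactly $[n-4]_1$.

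\emph{Main obstacle.} The trickiest point will be the first inequality: its right-hand side lacks an $[n-5]_2$ term, so Mini cannot afford to let Max force a $P_5$. My strategy handles this by having Mini pre-emptively close a triangle in the isolated-edge branch rather than attempt to extend the $P_3$ to a $P_4$; the universal bounds $[n-3]_2\le 0$ and $[n-4]_1\ge -3$ then collapse the arithmetic. The remaining verifications -- that each of Mini's intended edges respects the $S_4$-constraint and that the three responses of Max exhaust his legal moves -- are routine given the degree-$2$ restriction.
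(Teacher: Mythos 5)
Your proposal is correct and follows essentially the same route as the paper: for $[n]_2$ you give the identical strategy (Mini builds a $P_4$ in two moves, then closes a $C_4$ or $C_5$ depending on Max's reply), and for $[n]_1$ you use the paper's strategy of building a $P_3$ so that Max must extend it to a $P_4$ or else a triangle is created. Your version merely makes explicit the bounds ($[m]_i\le 0$ and $[n-4]_1\ge -3$ from Lemma~\ref{l:p4-lower-bd}) that the paper leaves implicit in the phrase ``Max has to extend it to $P_4$ to avoid the creation of a triangle,'' which is a welcome clarification but not a different argument.
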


\begin{proof}
    Mini as second can create a copy of $P_3$, and then Max has to extend it to a copy of $P_4$ to avoid the creation of a triangle. Mini then closes it into a copy of $C_4$ and the game is reduced to $[n-4]_1$.

    If Mini is first she can create a copy of $P_4$ after two moves. Then if Max closes it into a $C_4$ or claims an isolated edge, Mini responds by doing the other one of those two things reducing the game to $[n-4]_2$. If, however, Max extends the copy of $P_4$ into a $P_5$, Mini closes it into a cycle reducing the game to $[n-5]_1$.
\end{proof}

Putting together the results from Observation~\ref{o:p4-less-than-p3}, Lemma~\ref{l:p4-finite-cases}, Lemma~\ref{l:p4-lower-bd}, Lemma~\ref{l:mini-extends-matching} and Lemma~\ref{l:p4-recursion-mini}, this is what we know about the score of the game.

\begin{thm} \label{t:p4}
    The values of $s_i(n, \# P_4, S_4)$ are as given in Table~\ref{table:p4}, with universal lower bound of $n-3$ everywhere.
\end{thm}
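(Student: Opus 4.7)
The plan is to assemble Theorem~\ref{t:p4} by combining the lemmas just established, treating the universal lower bound separately from the upper bounds displayed in Table~\ref{table:p4}. The lower bound $s_i(n,\#P_4,S_4)\ge n-3$ is exactly Lemma~\ref{l:p4-lower-bd}, so nothing further is needed on that front. What remains is to justify, entry by entry, the upper bounds listed in the table.

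For small $n$ the entries are handed to us directly: Lemma~\ref{l:p4-finite-cases} gives $[n]_i=-n$ for $n\le 3$, together with $[4]_1=[4]_2=[5]_2=0$ and $[8]_1,[9]_2\le -2$. These take care of the small-$n$ columns. For larger $n$, the idea is to propagate the upper bounds by induction using Lemma~\ref{l:p4-recursion-mini}, which yields $[n]_1\le [n-4]_1$ and $[n]_2\le \max\{[n-4]_2,[n-5]_1\}$. Combined with the base cases above, this quickly gives: for $n\equiv 3\pmod 4$, iterating $[n]_1\le [n-4]_1$ from $[3]_1=-3$ forces $[n]_1=-3$, and Lemma~\ref{l:mini-extends-matching} already delivers $[n]_2=-3$; for $n\equiv 0\pmod 4$ with $n\ge 8$, iterating from $[8]_1\le -2$ yields $[n]_1\le -2$; and likewise for $n\equiv 1\pmod 4$ with $n\ge 9$, iterating the Mini-start recursion from $[9]_2\le -2$ yields $[n]_2\le -2$. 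In the remaining residue classes no improved base case is available, and there we simply appeal to Observation~\ref{o:p4-less-than-p3} together with Theorem~\ref{t:p3} to import the mild bound $[n]_i\le -1$ whenever $n$ and $i$ have the same parity, and $[n]_i\le 0$ otherwise.

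The main obstacle is really bookkeeping rather than a genuine combinatorial difficulty: the Mini-start recursion couples $[n-4]_2$ with $[n-5]_1$, so the two sequences have to be tracked in lockstep, and for each cell of Table~\ref{table:p4} one must check which of the four available tools (direct finite computation via Lemma~\ref{l:p4-finite-cases}, the $P_3$-comparison of Observation~\ref{o:p4-less-than-p3} applied to Theorem~\ref{t:p3}, the modulo-$4$ recursion of Lemma~\ref{l:p4-recursion-mini}, or the triangle-forcing matching strategy of Lemma~\ref{l:mini-extends-matching}) produces the tightest bound listed. Once the case analysis is organized cleanly by residue class modulo $4$ and by the identity of the first player, Theorem~\ref{t:p4} follows by a short induction on $n$ whose base is provided by Lemma~\ref{l:p4-finite-cases}, completing the proof.
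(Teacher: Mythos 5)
Your proposal follows exactly the same route as the paper: the paper's ``proof'' of Theorem~\ref{t:p4} is literally the one sentence preceding it, assembling Observation~\ref{o:p4-less-than-p3}, Lemma~\ref{l:p4-finite-cases}, Lemma~\ref{l:p4-lower-bd}, Lemma~\ref{l:mini-extends-matching} and Lemma~\ref{l:p4-recursion-mini}, and your write-up is a correct and in fact more explicit version of that assembly. One small slip in your residue-class bookkeeping: the column $s_1(4k+6)\le n-2$ is not delivered by your stated case division. For $n\equiv 2\pmod 4$ with Max starting, $n$ and $i$ have different parity, so your catch-all via Observation~\ref{o:p4-less-than-p3} and Theorem~\ref{t:p3} only gives $[n]_1\le 0$; the intended bound comes from iterating $[n]_1\le[n-4]_1$ of Lemma~\ref{l:p4-recursion-mini} all the way down to the base case $[2]_1=-2$ supplied by Lemma~\ref{l:p4-finite-cases}. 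This is repaired with the tools you already list, so the gap is one of omission rather than of method.
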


\begin{table}[h]
\begin{center}
\begin{tabular}{c|ccccc|cccc}
 $n$ & 1& 2& 3& 4& 5& 4k+6& 4k+7& 4k+8& 4k+9 \\ \hline
 $s_1$ & $n-1$& $n-2$& $n-3$& $n$& $n-1$& $\leq n-2$& $n-3$& $\leq n-2$& $\leq n-1$ \\ 
 $s_2$ & $n-1$& $n-2$& $n-3$& $n$& $n$& $\leq n-1$& $n-3$& $\leq n-1$& $\leq n-2$
\end{tabular}

\caption{Values of $s_1(n, \# P_4, S_4)$ and $s_2(n, \# P_4, S_4)$, for $k\geq 0$, with universal lower bound of $n-3$ everywhere in the table.}
\label{table:p4}
\end{center}
\end{table}

\subsection{Counting $P_5$'s}

\begin{proof}[Proof of Theorem~\ref{P5S3}]

We give a strategy for Mini, without specifying the first player.

\medskip 

{\bf Stage 1:} There is at least one isolated vertex. 

Let $S$ be the set of all nontrivial connected components that are not cycles, before a move of Mini. Before each of her moves, Mini chooses the first of the following three rules whose assumption is satisfied:
\begin{enumerate}
    \item\label{P5S3S11} $S$ contains a copy of $P_i$, where $i \in \{3,4\}$. Mini 'closes' such a path to a copy of $C_i$.
    \item\label{P5S3S12} $S$ consists of edges. Mini tries to claim an isolated $K_2$. If that is not possible, Mini connects one edge with an isolated vertex creating a copy of $P_3$.
    \item\label{P5S3S13} $S$ is an empty set. Mini plays an isolated $K_2$ if she can. If she can not, then the graph of the played edges consists of cycles and the game is over. 
\end{enumerate}

{\bf Stage 2:} There is no isolated vertex.

At this stage, Mini chooses one of the following options depending on the type of connected components 
beside the cycles and $K_2$'s.
\begin{enumerate}
    \item\label{P5S3S21} One copy of $P_3$ and one copy of $P_4$. Mini closes a copy of $C_4$. Note that after Max's next move, the game is continued in one of the following cases: \ref{P5S3S21}, \ref{P5S3S24}, or \ref{P5S3S25}.
    \item\label{P5S3S22} One copy of $P_3$. Mini closes a triangle. Then until the end of the game, she just closes copies of $C_4$. 
    \item\label{P5S3S23} One copy of $P_4$. Mini closes it. Then until the end of the game, she just closes copies of $C_4$.
    \item\label{P5S3S24} One copy of $P_5$. Mini closes a copy of $C_5$. Note that after Max's next move, the game is continued in: \ref{P5S3S23}.
    \item\label{P5S3S25} None. Mini connects two isolated edges and completes a copy of $P_4$ and then until the end of the game, she always closes cycles. If there are two paths, she closes one arbitrarily.
\end{enumerate}

Now we prove that Mini can follow her strategy. It is clear that she can follow rules \ref{P5S3S11} and \ref{P5S3S12} from Stage 1. 
If it is Mini's turn to play by rule \ref{P5S3S13} and there is just one isolated vertex, all the remaining components are cycles, so the game is over. Therefore, it is clear that Mini can follow Stage 1.

Note that since the length of the cycles that can be created in Stage 1 are at most 4, the score of the game is 0, if the game is finished in Stage 1.
\smallskip 

Observe that after Mini's last move played in Stage 1 the graph of the played edges is a disjoint union of the following components, some copies of $C_3$, some copies of $C_4$, some copies of $K_2$, and at most one copy of $P_3$. Since a cycle cannot be connected to any other component, all possible scenarios before the Mini's first move in Stage 2 are listed in the five cases above.

If Mini applies one of the rules \ref{P5S3S22} or \ref{P5S3S23}, it is clear that she can close the cycle and after that, only the pairs of isolated edges can be connected into one component, hence the only possible move for Max is to complete a copy of $P_4$, and then Mini closes a copy of $C_4$ and these moves will be repeated to the end of the game. Here the graph at the end of the game does not contain a copy of $P_5$, hence the score is $s=0$.

If Mini applies rule \ref{P5S3S24}, the score is $s=5$.
If Mini applies rule \ref{P5S3S25}, that is the only possible move for her and after that, Max can complete another copy of $P_4$ component or create a copy of $P_6$. In both of these cases, Mini will close a cycle in her following move. If she closes a copy of $C_6$ (that is possible just once), the game is reduced to \ref{P5S3S23}. Otherwise, Mini closes a copy of $C_4$ and repeats closing a cycle such that at the end of the game the graph contains at most one copy of $C_6$ and all the remaining cycles have at most $4$ vertices. Therefore, the score of the game is $s \leq 6$.
This concludes the first part of the theorem.
\medskip 

Now we look at Max's strategy when he is the second player and $n=4k$ or $n=4k+1$. The game stops at the moment when the first cycle on at least $5$ vertices is made, showing that the score is at least $5$. 

While there is no cycle in the graph:
\begin{enumerate}
    \item If there is a $P_i$ component, where $i \geq 5$, Max closes a copy of $C_i$ and the game is over.
    \item If Mini plays an isolated $K_2$, Max does the same (because of the form of $n$).
    \item\label{P5S33} Else, if Mini plays in a component that is a copy of $P_i$ ($i \ge 3$) after her move, Max adds one isolated edge to this path, making a copy of $P_{i+2}$, or if there is no isolated edge he adds an isolated vertex, making a copy of $P_{i+1}$.
\end{enumerate} 

When Max has to apply rule \ref{P5S33} for the first time, Mini has completed either a copy of $P_3$ or a copy of $P_4$. In case this component is a copy of $P_3$, there has to be at least one isolated edge because the first round has been finished with two isolated edges. Otherwise, if the component is a copy of $P_4$ there must be either an isolated edge or vertex. Therefore, after this move of Max, there must be a $P_i$ component, where $i \geq 5$, so the game will be over in the next round with $s_2 \geq 5$.

If the graph is matching and there is at most one isolated vertex, it must be Mini's turn (because of the condition for $n$). Therefore, Mini has to complete either a copy of $P_3$ or a copy of $P_4$, and in his following move, Max completes a copy of $P_i$ where $i \geq 5$, following rule \ref{P5S33}, so the game will be finished in the next round with $s_2 \geq 5$.
\end{proof}

\subsection{Counting $P_6$'s}

\begin{proof}[Proof of Theorem~\ref{P6S3}]
We give a strategy for Mini, independently of who is the first player.

We look at the graph of already played edges before a move of Mini and check the set $S$ of nontrivial connected components that are not cycles. Before each of her moves, Mini chooses the first of the following four rules, where the assumption is satisfied:
\begin{enumerate}
    \item $S$ contains a copy of $P_i$, where $i \in \{3,4,5\}$. Then Mini closes it to a copy of $C_i$.
    \item $S$ consists of two independent edges. Then Mini completes a copy of $P_4$, connecting these isolated edges. 
    \item\label{P6S33} $S$ consists of a single edge. Mini completes a copy of $P_3$, connecting one isolated vertex to $K_2$.
    \item\label{P6S34} $S$ is an empty set. Mini plays an isolated $K_2$. 
\end{enumerate}

The only possible cases where Mini cannot answer using her strategy are rules \ref{P6S33} and \ref{P6S34}, but then the game is over before that move because either $S=K_2$ and there is no isolated vertex, or $S= \emptyset$ and there is at most one isolated vertex and all the remaining connected components are cycles on $3$, $4$ or $5$ vertices. Therefore, when Mini cannot follow this strategy the game is over. 

At the moment when Mini has finished her move, the graph contains, some copies of $C_3$, some copies of $C_4$, some copies of $C_5$, and at most one component from the set $\{K_2, P_3, P_4\}$. Therefore, after Max's move the set $S$ is exactly one of the $\{\emptyset, K_2, 2K_2, P_3, P_4, P_5, P_3+K_2, P_4+K_2\}$. Following her strategy, Mini ensures that there is no cycle on more than $5$ vertices in the graph. Hence, $\displaystyle{s(n,\#P_6,S_4)= 0}$. 
\end{proof}

\section{$\cC$-saturation games}\label{tree}

In this section, we consider games with the set of all cycles $\cC=\{C_3,C_4,\dots\}$ being forbidden. Note that in any such game, there are exactly $n-1$ claimed edges during the game, and the resulting game graph is a tree. We start with a simple lemma. 

\begin{lem}\label{lemaTn}
Let $T$ be a tree on $\lfloor\frac{n}{2}\rfloor+1$ vertices. If the players alternately
claim edges of $K_n$ and neither is allowed to play a cycle, a player can build $T$. 
\end{lem}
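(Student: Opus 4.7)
The plan is to designate the first player as the \emph{Builder} and have him embed $T$ vertex by vertex into the game graph. Fix an ordering $v_1,v_2,\ldots,v_k$ of $V(T)$ (with $k=\lfloor n/2\rfloor+1$) coming from a BFS or DFS traversal of $T$ from some root, so that for every $i\geq 2$ the vertex $v_i$ has a unique neighbour $p(v_i)\in\{v_1,\ldots,v_{i-1}\}$ in $T$. In particular, every prefix $T[\{v_1,\ldots,v_j\}]$ is itself a tree, so the embedded vertices will always span a connected subgraph of the game graph.

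Builder maintains a partial injective embedding $\phi\colon\{v_1,\ldots,v_j\}\to V(K_n)$ with the invariant that all edges $\phi(v_s)\phi(p(v_s))$ for $2\le s\le j$ have been claimed. On his first move he plays an arbitrary edge and assigns its endpoints to $\phi(v_1)$ and $\phi(v_2)$. On his $j$-th move ($j\ge 2$) he wants to place $v_{j+1}$, whose parent $v_m$ already sits at $\phi(v_m)$. Builder picks any vertex $u$ lying in a component of the current game graph different from the component containing $\phi(v_m)$, claims the edge $\phi(v_m)u$ (legal since the two endpoints lie in different components, so no cycle is created), and sets $\phi(v_{j+1}):=u$. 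This $u$ is automatically distinct from $\phi(v_1),\ldots,\phi(v_j)$ because all embedded vertices lie in the component of $\phi(v_m)$, by the invariant.

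The crux, and really the only nontrivial step, is verifying that such a $u$ exists on every Builder move $j\le k-1=\lfloor n/2\rfloor$. Before Builder's $j$-th move, Builder and Opponent have each played $j-1$ edges, so the game graph is a forest with $2j-2$ edges, and hence with exactly $n-(2j-2)=n-2j+2$ components. For $j\le\lfloor n/2\rfloor$ this quantity is at least $2$, so there is always a component disjoint from the one housing $\phi(v_1),\ldots,\phi(v_j)$ and Builder can pick $u$ from it. Finally, Builder indeed gets to make all $\lfloor n/2\rfloor$ necessary moves: in a game lasting exactly $n-1$ rounds (since the final graph is a spanning tree), the starting player claims $\lceil (n-1)/2\rceil=\lfloor n/2\rfloor$ edges regardless of the parity of $n$, which matches the $k-1$ edges of $T$ precisely.

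Everything else is pure bookkeeping: the greedy attachment step preserves the invariant because connecting $\phi(v_m)$ to a fresh vertex $u$ in a new component produces exactly one new tree-edge $\phi(v_{j+1})\phi(p(v_{j+1}))$ and keeps the embedded image connected. Thus, after Builder's $(k-1)$-st move, the embedding $\phi$ is complete and all edges of $T$ appear in the game graph, proving the lemma.
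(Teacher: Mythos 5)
Your argument is essentially the paper's: both proofs order $V(T)$ so that every prefix induces a subtree and greedily attach the next vertex by claiming an edge from the image of its parent to a vertex in a different component, with the same component count $n-(2j-2)\ge 2$ and move count $\lceil(n-1)/2\rceil=\lfloor n/2\rfloor$ justifying feasibility. The one real shortfall is that you only treat the case where the builder moves \emph{first}, whereas the lemma is later invoked for Max and for Mini ``regardless of whether he is the first or the second player,'' so the second-player case is genuinely needed. The paper's proof covers it with one extra observation that your setup would also accommodate: a second-moving builder simply adopts the opponent's first edge as the embedding of $T_2$ (any single edge is isomorphic to $T_2$), after which he needs only $\lfloor n/2\rfloor-1$ further attachment moves, and he has $\lfloor(n-1)/2\rfloor\ge\lfloor n/2\rfloor-1$ moves available; the component-counting check goes through unchanged. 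You should add this case (or at least remark that the identical argument handles it) for the lemma to support its applications.
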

\begin{proof}
During the game, a player that we call TreeBuilder builds greedily a connected component $C$ such that at the end of the game, this component will contain a tree isomorphic to $T$. Let $V(T)=\{v_1,v_2,\dots,v_{|V(T)|}\}$ such that for any $1\le j \le |V(T)|$ the graph $T_j:=T[v_1,\dots,v_j]$ is a tree. Suppose TreeBuilder managed to build a tree isomorphic to $T_j$ in $C$, for some $j$, and he is to move next and the game is not yet over. Then there exists a component $C'\neq C$, and TreeBuilder claims an $u'u$, where $u'$ is an arbitrary vertex of $C'$, and $u$ is the vertex that plays the role of the only neighbor of $v_{j+1}$ in $T_{j+1}$.

If TreeBuilder starts the game, then his first claimed edge creates $T_2$ and he will play $\lceil \frac{n-1}{2}\rceil-1$ more moves, and thus can build any tree on $2+\lceil \frac{n-1}{2}\rceil-1=\lfloor \frac{n}{2}\rfloor+1$ vertices. If TreeBuilder's opponent starts the game, then the opponent's first edge builds $T_2$ and TreeBuilder still has $\lfloor \frac{n-1}{2}\rfloor$ moves yielding the same bound.
\end{proof}

\begin{lem}\label{numberstar}
    If a tree $T$ on $n$ vertices contains at least $x$ vertices of degree at least 2, then the number of copies of $S_k$ in $T$ is at most $\binom{n-x}{k-1}$, provided $k\ge 4$.
\end{lem}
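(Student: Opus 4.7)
The plan is to reduce everything to a convexity argument on the degree sequence of $T$. I would begin with the standard identity that the number of copies of $S_k$ in any graph equals $\sum_{v \in V(T)} \binom{\deg_T(v)}{k-1}$, as each star is counted exactly once at its center. Since $k \ge 4$ implies $\binom{d}{k-1} = 0$ for all $d \le 2$, only vertices of degree at least $3$ actually contribute; in particular, leaves do not matter.

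Next, let $y$ be the \emph{exact} number of vertices of $T$ of degree $\ge 2$, so that $y \ge x$. From $\sum_v \deg_T(v) = 2(n-1)$ and the fact that the $n - y$ leaves contribute $n - y$ to the sum, I get $\sum_{v\colon \deg_T(v) \ge 2} \deg_T(v) = n + y - 2$. Thus the task reduces to bounding $\sum_{i=1}^{y} \binom{d_i}{k-1}$ from above, subject to $d_i \ge 2$ and $\sum_{i=1}^y d_i = n + y - 2$.

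The main step is a transfer / majorization move. Using discrete convexity of $f(t) = \binom{t}{k-1}$, for any $a \ge b \ge 2$ one has $f(a) + f(b) \le f(a+b-2) + f(2)$, and since $k \ge 4$ the term $f(2)$ vanishes. Applying this repeatedly to pairs of internal degrees, I can pile all of the internal degree onto a single vertex. After $y - 1$ transfers, one internal vertex carries degree $(n + y - 2) - 2(y-1) = n - y$ and the remaining $y-1$ internal vertices have degree exactly $2$, contributing $0$. Therefore $\sum_v \binom{\deg_T(v)}{k-1} \le \binom{n-y}{k-1} \le \binom{n-x}{k-1}$, using $y \ge x$ and monotonicity of $\binom{\cdot}{k-1}$ in the last step.

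I do not expect any serious obstacle: all the work is in checking the transfer inequality, which is immediate from convexity of $f$ on the integers, and in disposing of the trivial boundary cases $y \le 1$ (where $T$ is an edgeless vertex or a star and the bound is direct). The hypothesis $k \ge 4$ is essential precisely because it makes $f(2) = 0$, which is what lets the transfers kill off every internal vertex but one; for $k = 3$ the same scheme would leave a residual term $(y-1)\binom{2}{2}$, which is exactly the discrepancy one sees in the $P_3$-counting results of Section~\ref{deg2}.
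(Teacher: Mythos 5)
Your proposal is correct and follows essentially the same route as the paper: write the number of $S_k$'s as $\sum_v \binom{\deg_T(v)}{k-1}$ and smooth the degree sequence via the convexity transfer $\binom{a}{k-1}+\binom{b}{k-1}\le\binom{a-1}{k-1}+\binom{b+1}{k-1}$, using $k\ge 4$ to make degree-$2$ vertices contribute nothing, so that all excess degree can be piled onto a single vertex of degree at most $n-x$. Your version is just a slightly more explicit bookkeeping of the same majorization argument (introducing the exact internal-vertex count $y$ and the degree-sum identity), and it is sound.
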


\begin{proof}
    The number of copies of $S_k$ is $\sum_{i=1}^n\binom{d_i}{k-1}$, where $d_i$ is the degree of vertex $v_i\in V(T)$. For any $a\le b$, we have $\binom{a}{k-1}+\binom{b}{k-1}\le \binom{a-1}{k-1}+\binom{b+1}{k-1}$. Therefore, under the conditions of the lemma, $\sum_{i=1}^n\binom{d_i}{k-1}$ is maximal if $x-1$ vertices have degree 2, $n-x$ vertices have degree 1, and one vertex has degree $n-x$. This is realizable by a $P_{x+2}$ with all vertices not on the path connected to the same vertex of the path that is not end-vertex.
\end{proof}

The \textit{double star} $D_{x,y}$ is the tree on $x+y+2$ vertices with $x+y$ leaves and a vertex of degree $x+1$ and a vertex of degree $y+1$ (the \textit{centers} of the double star). If $|x-y|\le 1$, then we say the double star is \textit{balanced}.

\begin{lem}\label{doublestar}
    If a tree $T \neq D_{x,y}$ on $n$ vertices contains a double star $D_{x,y}$, then $T$ contains at least $xy+\min\{x,y\}+(n-x-y-2)-1$ copies of $P_4$.
\end{lem}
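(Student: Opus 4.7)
The plan is to use the identity $P_4(T)=\sum_{ab\in E(T)}(d(a)-1)(d(b)-1)$, valid for any tree since each $P_4$ has a unique middle edge. Without loss of generality assume $x\le y$, and let $u,v$ be the centres of $D_{x,y}\subseteq T$, so $d(u)\ge x+1$, $d(v)\ge y+1$, and $uv\in E(T)$. Write $d(u)=x+1+\delta_u$, $d(v)=y+1+\delta_v$ with $\delta_u,\delta_v\ge 0$. Deleting the edge $uv$ splits $T$ into two subtrees $T_u\ni u$ and $T_v\ni v$ of sizes $x+1+p$ and $y+1+q$, where $p+q=m:=n-x-y-2$ and $\delta_u\le p$, $\delta_v\le q$. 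The edge $uv$ itself contributes $(x+\delta_u)(y+\delta_v)$ to the above sum.

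The technical heart is the following subclaim, to be proved by induction on $s:=|V(S)|$: if $S$ is the subtree hanging off a vertex $w$ along an edge $wr$ of $T$ (with $d(w)\ge 2$) and $s\ge 2$, then the joint contribution of the edge $wr$ and all edges of $S$ to the above sum is at least $d(w)+s-3$. The base case $s=2$ gives exactly $d(w)-1$. For the inductive step, let $r$ have $t\ge 1$ children in $S$; applying the induction hypothesis to each nontrivial child-subtree (with $r$, of degree $t+1\ge 2$, as the new attachment vertex) and rearranging reduces the desired inequality to $(t-1)(d(w)-2+k)\ge 0$, where $k$ counts the nontrivial child-subtrees; this is nonnegative since $d(w)\ge 2$.

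Summing the subclaim over the $d(u)-1$ subtrees attached to $u$ in $T_u$, and using that the nontrivial ones account for a total of $p-\delta_u+k_u$ vertices (where $k_u$ is their count), I obtain $R_u\ge k_u(x+\delta_u-1)+(p-\delta_u)$, where $R_u$ is the total contribution from the edges of $T_u$. In particular $R_u\ge x+p-1$ whenever $p>\delta_u$ (which forces $k_u\ge 1$), and $R_u\ge 0$ otherwise; the analogous bounds hold for $R_v$. The proof then splits into four cases according to whether $p=\delta_u$ or $p>\delta_u$, and similarly for $q$ versus $\delta_v$. Combining $P_4(T)\ge (x+\delta_u)(y+\delta_v)+R_u+R_v$ with these bounds and substituting $m=p+q$, each case reduces to a sum of nonnegative terms using $y\ge x\ge 1$; the tightest subcase is $p=\delta_u,\,q=\delta_v$ (when $T$ is actually the larger double star $D_{d(u)-1,\,d(v)-1}$), where the target inequality becomes $(x-1)(q-1)+(y-1)(p-1)+pq+(y-1)\ge 0$, which is valid since $p+q=m\ge 1$.

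The main obstacle will be the combinatorial bookkeeping: tracking $\delta_u,\delta_v,p,q,k_u,k_v$ through the case analysis, and ensuring the subclaim's recursion preserves the hypothesis $d(w)\ge 2$ (which it does, as the new attachment degree is $t+1\ge 2$). The extremal configuration realising equality — a chain of $m$ extra vertices hanging off a single leaf on the smaller side of the double star — serves as a useful sanity check throughout.
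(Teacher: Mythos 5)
Your proof is correct, but it takes a genuinely different and much more technical route than the paper's. The paper argues directly by exhibiting the required copies of $P_4$: the double star itself contains $xy$ of them; each of the $n-x-y-2$ vertices outside $D_{x,y}$ is the endpoint furthest from $D_{x,y}$ of at least one $P_4$ (walk from that vertex toward the double star and, if needed, continue inside it), and these are pairwise distinct and distinct from the internal $xy$; finally, the vertex $u^*$ outside $D_{x,y}$ that is adjacent to it is an endpoint of at least $\min\{x,y\}$ copies of $P_4$ whose other endpoint lies in $D_{x,y}$, and at most one of these was already counted --- whence the $-1$. That is the entire proof. Your middle-edge identity $\sum_{ab\in E(T)}(d(a)-1)(d(b)-1)$ is exact on trees, and I checked that the subclaim's recursion (reducing to $(t-1)(d(w)-2+k)\ge 0$), the bound $R_u\ge k_u(x+\delta_u-1)+p-\delta_u$, and all four cases go through, including the tight case $p=\delta_u$, $q=\delta_v$; so your argument does establish the lemma. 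What it buys is a systematic scheme that could in principle yield sharper, structure-dependent bounds, at the cost of bookkeeping the problem does not require. One shared caveat: both your argument (via ``$m\ge 1$'') and the paper's (via the existence of $u^*$) silently assume $n>x+y+2$; for $T=D_{x,y}$ itself the stated bound can fail (e.g.\ $D_{2,2}$ has $4$ copies of $P_4$, not $5$), though in the paper's application $n-x-y-2\ge 1$ always holds.
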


\begin{proof}
    As $T$ is connected, every vertex $u\notin D_{x,y}$ is the end-vertex of a $P_4$ such that out of the two end-vertices it is further from $D_{x,y}$. Also, there is a vertex $u^*\notin D_{x,y}$ that is connected to $D_{x,y}$ by an edge, and therefore $u^*$ is an end-vertex of at least $\min\{x,y\}$ copies of $P_4$, and the other end-vertex is on $D_{x,y}$.
\end{proof}

\begin{lem}\label{l:fns}
    Let $T$ be a tree on $n\ge s\ge 4$ vertices. If $T$ contains a copy of $P_s$, then the number of copies of $P_4$ that $T$ contains is at most 
    $$f(n,s):=\begin{cases}
        s-3+2(n-s)+\lfloor \frac{n-s}{2}\rfloor\cdot \lceil \frac{n-s}{2}\rceil, &\quad\text{if}\ s\ge 6,  \\
        \lfloor\frac{n-2}{2}\rfloor\cdot \lceil \frac{n-2}{2}\rceil, &\quad\text{if}\ s=4 ~\text{or}\ s=5.
    \end{cases}$$
\end{lem}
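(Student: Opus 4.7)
The starting point is the identity
\[
\#P_4(T)\;=\;\sum_{uv\in E(T)}\bigl(d(u)-1\bigr)\bigl(d(v)-1\bigr),
\]
which holds because every copy of $P_4$ in $T$ is counted exactly once via its unique middle edge (each middle edge $uv$ with $d(u),d(v)\ge 2$ is the middle of $(d(u)-1)(d(v)-1)$ distinct $P_4$'s). Setting $w(v):=d(v)-1$, we have $w(v)=0$ iff $v$ is a leaf, $\sum_v w(v)=n-2$, and the identity reads $\#P_4(T)=\sum_{uv\in E(T)}w(u)w(v)$.

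For $s\in\{4,5\}$ the bound is immediate. Fix a 2-coloring $(A,B)$ of $T$ (as a bipartite graph); every edge crosses $(A,B)$, so
\[
\#P_4(T)\;\le\;\Bigl(\sum_{v\in A}w(v)\Bigr)\Bigl(\sum_{v\in B}w(v)\Bigr)\;\le\;\Bigl\lfloor\tfrac{n-2}{2}\Bigr\rfloor\Bigl\lceil\tfrac{n-2}{2}\Bigr\rceil,
\]
the second inequality being integer AM--GM on two non-negative integers summing to $n-2$.

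For $s\ge 6$ the plan is to reduce to the \emph{caterpillar} case, in which $T$ is a path $p_1 p_2\cdots p_s$ together with $a_i\ge 0$ pendant leaves attached to $p_i$ (and $a_1=a_s=0$, $\sum_{i=2}^{s-1}a_i=n-s=:k$). In that case $w(p_i)=1+a_i$ for $2\le i\le s-1$, pendant-leaf edges contribute $0$, and
\[
\#P_4(T)\;=\;\sum_{i=2}^{s-2}(1+a_i)(1+a_{i+1})\;=\;(s-3)+(2k-a_2-a_{s-1})+\sum_{i=2}^{s-2}a_i a_{i+1}.
\]
Splitting the indices $\{2,\ldots,s-1\}$ by parity and applying the bipartition inequality on the path,
\[
\sum_{i=2}^{s-2}a_i a_{i+1}\;\le\;\Bigl(\sum_{i\text{ even}}a_i\Bigr)\Bigl(\sum_{i\text{ odd}}a_i\Bigr)\;\le\;\lfloor k/2\rfloor\lceil k/2\rceil.
\]
Dropping the non-positive $-a_2-a_{s-1}$ gives the required bound $\#P_4(T)\le(s-3)+2k+\lfloor k/2\rfloor\lceil k/2\rceil=f(n,s)$.

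The main obstacle is the reduction to a caterpillar. Given a branch $B$ of $T$ attached at an internal path vertex $p_i$ with $k_i:=|B|-1$ non-path vertices (of which $b_i$ are adjacent to $p_i$), replacing $B$ by $k_i$ pendant leaves at $p_i$ changes $\#P_4(T)$ by $(k_i-b_i)\bigl(w(p_{i-1})+w(p_{i+1})\bigr)-\beta_i$, where $\beta_i$ is the contribution to $\sum w(u)w(v)$ of the edges incident to or inside $B$. This difference is non-negative whenever $w(p_{i-1})+w(p_{i+1})\ge 2$---which covers $p_i\in\{p_3,\ldots,p_{s-2}\}$---but it can be negative at the boundary positions $p_2,p_{s-1}$. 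I would therefore precede each flattening by a swap that relocates a boundary branch one step deeper along the path, and verify that this relocation never decreases $\#P_4$. Alternatively, one can bypass the explicit reduction by carrying the bipartition argument through on $T$ itself, bounding the correction $\sum_{(u,v)\in A\times B\setminus E(T)}w(u)w(v)$ from below using the forced non-edges between path vertices of opposite parity at distance $\ge 3$, together with the unbalance term $(S_A-S_B)^2/4$; verifying that these together cover the gap $\lfloor(n-2)/2\rfloor\lceil(n-2)/2\rceil-f(n,s)$ is the technical core of this alternative route.
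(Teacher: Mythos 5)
Your identity $\#P_4(T)=\sum_{uv\in E(T)}(d(u)-1)(d(v)-1)$ is correct, and the bipartition/AM--GM argument does settle the cases $s\in\{4,5\}$ (arguably more cleanly than the paper's diameter-$3$ and diameter-$4$ case analysis); the caterpillar computation for $s\ge 6$ is also correct. But the proof is incomplete, and the step you yourself flag as the main obstacle --- reducing a general tree to a caterpillar --- is not merely unfinished: the monotonicity claim it rests on is false. You assert that flattening a branch $B$ at $p_i$ changes the count by $(k_i-b_i)\bigl(w(p_{i-1})+w(p_{i+1})\bigr)-\beta_i$ and that this is non-negative whenever $w(p_{i-1})+w(p_{i+1})\ge 2$. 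Take $B$ to consist of $m$ children of $p_i$, each carrying $m$ pendant leaves, with $p_{i-1},p_{i+1}$ plain interior spine vertices, so that $w(p_{i-1})+w(p_{i+1})=2$. Then $k_i-b_i=m^2$ while $\beta_i=(1+m)\cdot m\cdot m=m^2+m^3$, so the change equals $2m^2-m^2-m^3=m^2-m^3<0$ for $m\ge 2$; concretely, the unflattened tree contains $s-3+2m+m^2+m^3$ copies of $P_4$ against $s-3+2m+2m^2$ after flattening. (This tree still satisfies the bound $f(n,s)$, since $n-s=m+m^2$ makes the quadratic term of $f$ large enough; it only refutes the reduction, not the lemma.) So flattening can strictly decrease the count even far from the boundary positions, the proposed fix of relocating boundary branches does not touch this failure mode, and your alternative route via correcting the global bipartition bound is likewise only a sketch --- it is exactly there that all the work lies.

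For comparison, the paper avoids any structural reduction to caterpillars. It first replaces $s$ by $d+1$ for $d$ the diameter (using $f(n,s)\ge f(n,s+1)$), then for $d\ge 5$ inducts on $n-s$ along a longest path $P$, splitting the $P_4$'s into those contained in $P$ (exactly $s-3$), those with exactly one endpoint off $P$ (at most $2$ per off-path vertex), and those with both endpoints off $P$ (at most $\lfloor\frac{n-s}{2}\rfloor\cdot\lceil\frac{n-s}{2}\rceil$, proved via a case analysis on a furthest pair of off-path vertices). If you want to rescue your route, you would need either to prove that last bound directly --- which is essentially the paper's Case A/Case B argument --- or to exhibit a modification procedure that is genuinely monotone, which simple flattening is not.
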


\begin{proof}
    We prove by induction on $n-s$ with the base case $n-s=0,1,2$ being easy or checkable by hand. Also, observe that $f(n,s)\ge f(n,s+1)$, so for the diameter $d$ of $T$ we can always assume $s=d+1$.

    If $d=3$, then $T=D_{x,y}$ with $x+y=n-2$ and the number of copies of $P_4$ is $xy$ which is maximized as claimed by the lemma.

    If $d=4$, and so a longest path in $T$ is $u_1u_2u_3u_4u_5$, then  we denote by $a$ the number of neigbors of $u_3$ and we let $N(u_3)=\{v_1,v_2,\dots,v_a\}$. Denote by $b_i$ the number of neighbors of $v_i$ excluding $u_3$ for $1\le i\le a$. Then $1+a+\sum_{i=1}^ab_i=n$, and the number of copies of $P_4$ in $T$ is $(a-1)\sum_{i=1}^ab_i=(a-1)(n-a-1)$. This is maximized when $a=\lfloor \frac{n}{2}\rfloor$.  

    Finally, we consider $d\ge 5$. Let $P=u_1u_2\dots u_s$ be a longest path in $T$. Note that there are $s-3$ copies of $P_4$ on the path, and for any vertex $v\in V(T)\setminus V(P)$, there are at most 2 copies of $P_4$ starting at $v$ and ending on $P$. We need to show that there are at most $\lfloor \frac{n-s}{2}\rfloor\cdot \lceil \frac{n-s}{2}\rceil$ copies of $P_4$ with both end-vertices in $V(T)\setminus V(P)$.    
    Let $v,v'\in V(T)\setminus V(P)$ be two vertices not on the path that are furthest from each other.
    Suppose first that $v$ is not a leaf in $T$. Then $v$ has to be the only neighbor of a vertex of $P$ in $V(T)\setminus V(P)$. But then all copies of $P_4$ with both end-vertices in $V(T)\setminus V(P)$ lie completely in $V(T)\setminus V(P)$. So, the number of such $P_4$'s, by induction, is at most $f(n-s,s')\le f(n-s,4)=\lfloor \frac{n-s-2}{2}\rfloor \cdot \lceil \frac{n-s-2}{2}\rceil$ as claimed. Now we can assume that both $v,v'$ are leaves of $T$.

        \textsc{Case A} $d_T(v,v')\neq 4$

        Then if $S^r_T(w)$ denotes the set of those vertices of $V(T)\setminus V(P)$ that are at distance exactly $r$ from $w$, then $S^3_T(v)\cap S^3_T(v')=\emptyset$, unless $d_T(v,v')= 6$ in which case there can be at most one vertex in the intersection (or if $D_T(v,v')=4$, but that is \textsc{Case B}). We claim $\min\{|S^3_T(v)|, |S^3_T(v')|\}\le \lfloor \frac{n-s}{2}\rfloor$, which is trivial if the intersection is empty, while it follows from the fact that $v,v'\notin S^3_T(v)\cup S^3_T(v')$ if $d_T(v,v')\neq 3$. We can apply induction to $T\setminus \{v^*\}$, where $v^*\in \{v,v'\}$ with $|S^3_T(v^*)|\le \frac{n-s}{2}$ to obtain that the number of copies of $P_4$ in $T$ is at most $f(n-1,s)+2+\lfloor \frac{n-s}{2}\rfloor=f(n,s)$.

        \textsc{Case B} $d_T(v,v')= 4$

        Then consider the minimal subtree $T'$ of $T$ containing all vertices of $V(T)\setminus V(P)$. As it has diameter 4, there exists a vertex $w$ with $d_{T'}(w,y)\le 2$, for every $y \in V(T')$. Let $a$ denote the number of neighbors of $w$ in $V(T)\setminus V(P)$ and let $b$ denote the number of vertices in $V(T)\setminus V(P)$ that are at distance 2 from $w$. Then $a+b$ is either $n-s$ or $n-s-1$ depending on whether $w\in V(P)$ and the number of copies of $P_4$ with both end-vertices not on $P$ is at most $a\cdot b$ which is maximized when $|a-b|\le 1$.
\end{proof}

Note that the bounds in Lemma \ref{doublestar} are tight as shown by the double-star, the spider with $n/2$ legs of length 2, and their 'extended variants' for larger values of $s$.

\smallskip

Now we are ready to prove Theorem \ref{T2} and Theorem \ref{T3}.

\begin{proof}[Proof of Theorem \ref{T2}]
First, we look at a strategy for Max. Using Lemma \ref{lemaTn} we know that Max can build a star on $\lfloor\frac{n}{2}\rfloor+1$ vertices, regardless of whether he is the first or the second player. Therefore, $\displaystyle{s(n,\# S_k,\cC) \geq \binom{\lfloor\frac{n}{2}\rfloor}{k-1}}$.

Now, we give a strategy for Mini. Based on Lemma \ref{numberstar}, she needs to create at least $\lfloor \frac{n}{2}\rfloor$ vertices of degree at least 2 by the end of the game. If Mini starts, then she can make sure that after her second move, the game graph is a copy of $P_4$. From then on, she always connects a leaf of a component to any other component thus creating a new non-leaf vertex. As there are $n-4$ more moves to be played, by the end of the game she ensures $2+\lfloor \frac{n-4}{2}\rfloor=\lfloor \frac{n}{2}\rfloor$ non-leaf vertices.

If Max starts by claiming an edge $e_1$, then Mini claims an edge $e_2$ with $e_1\cap e_2=\emptyset$. If Max claims $e_3$ with $e_3\cap (e_1\cup e_2)\neq \emptyset$, then Mini creates a copy of $P_5$ with her second move and from then on creates one new non-leaf vertex with each of her moves obtaining a total of $3+\lfloor \frac{n-5}{2}\rfloor\ge \lfloor \frac{n}{2}\rfloor$ non-leaves. Finally, if Max claims $e_3$ to obtain a matching with 3 edges, then Mini joins $e_1,e_2$ to obtain a copy of $P_4$. Now if Max, with his next move, connects a copy of $P_4$ and $e_3$, then he creates a non-leaf, and Mini can always create a new non-leaf obtaining a total of $2+1+\lfloor \frac{n-6}{2}\rfloor\ge \lfloor \frac{n}{2}\rfloor$ non-leaves. Or Max does not connect, but then Mini with her first move can connect two leaves and make them non-leaves and by the end of the game achieves the same number of non-leaves as before.
\end{proof}

\begin{proof}[Proof of Theorem \ref{T3}]
First, we describe a strategy for Max. Using Lemma \ref{lemaTn} we know that Max can build a balanced double star $D_{x,y}$ on $\lfloor\frac{n}{2}\rfloor+1$ vertices, regardless of whether he is the first or the second player.
Therefore, $x+y=\lfloor\frac{n-2}{2}\rfloor$ and thus $x=\displaystyle{\Big\lfloor\frac{\lfloor\frac{n-2}{2}\rfloor}{2}\Big\rfloor, y=\Big\lceil\frac{\lfloor\frac{n-2}{2}\rfloor}{2}\Big\rceil}$. Lemma \ref{doublestar} yields that the number of copies of $P_4$ is at least $xy+\min\{x,y\}+(n-x-y-2)-1$. Plugging in we get that $\displaystyle{s(n,\# P_4,\cC) \geq \frac{n^2}{16}+ \frac{n}{8} - \frac{27}{16}}$.
\medskip

Next, we look at Mini's strategy. Using TreeBuilder's strategy from Lemma \ref{lemaTn} Mini can build a path $P$ on $\lfloor\frac{n}{2}\rfloor+1$ vertices, regardless of whether she is the first or the second player.
Applying Lemma \ref{l:fns}, we obtain $\displaystyle{s(n,\# P_4,\cC) \leq \frac{n^2}{16}+ \frac{13n}{8} - \frac{35}{16}}$.
\end{proof}

\section{$P_5$-saturation games}\label{p5}

\begin{proof}[Proof of Theorem \ref{T1}]
First, note that the only way to have more than one triangle in a connected component $C$ of a graph that does not have a copy of $P_5$ is if $C$ is a subgraph of $K_4$.

\smallskip 

Now, we give a strategy for Max. 
Assume first that he is the second player. 
If there is at least one isolated vertex after the move of Mini, he follows Stage 1, otherwise, he proceeds to Stage 2.

\medskip 

\textbf{Stage 1}. Max chooses the first of the following three rules that is satisfied, depending on the type of the connected component $C$ Mini played in her last move:

\begin{enumerate}
    \item \label{1} $C$ is a $K_2$.
    Max tries to claim an isolated $K_2$. If that is not possible, he creates a copy of $P_3$ connecting $C$ with one isolated vertex and then proceeds to Stage 2.
    
    \item \label{2} $C$ is a copy of $P_3$.
    Max claims the edge that completes the triangle.
    
    \item \label{3} $C$ has four vertices:
    \begin{enumerate}
        \item\label{3a} If $C$ is a copy of $P_4$, Max claims the edge that completes the copy of $C_4$.
        \item\label{3b} Else, if there is a free edge in that component Max claims it.
        \item\label{3c} Else, if there are at least two isolated vertices Max claims an isolated $K_2$.
        \item\label{3d} Else, if there is one isolated vertex, and on top of that either an isolated edge or an isolated triangle, Max claims the edge that creates, respectively, a copy of $P_3$ or a triangle with one appended edge, which in this proof we will denote by $G_4^1$. Then he proceeds to Stage 2.
        \item\label{3e} Otherwise, Max proceeds to Stage 2.
    \end{enumerate}
     
\end{enumerate}

\textbf {Stage 2}. Max repeatedly plays any legal move till the end of the game.

\medskip

We now analyze the strategy above. It is clear that Max can follow his strategy in Stage 2. The following claim is an immediate consequence of the above strategy, and so we omit the proof. 

\begin{cl}\label{L1}
After Max has finished his move and remained at Stage 1, every non-trivial connected component of the graph is one of the following: $K_2, C_3, C_4, K_4-e$ or $K_4$.
\end{cl}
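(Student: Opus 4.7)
The plan is to prove Claim~\ref{L1} by induction on the number of rounds played while Max is still in Stage 1. The base case holds trivially: at the start of the game there are no non-trivial components. For the inductive step, assume the claim held after Max's previous move, so right now every non-trivial component lies in $\mathcal{L} := \{K_2, C_3, C_4, K_4-e, K_4\}$. The argument then splits into two parts: first enumerate what the component $C$ Mini has just created or modified can be, then verify that in each branch of Stage 1 Max's response produces something in $\mathcal{L}$.

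For the enumeration, the key observation is that every 4-vertex member of $\mathcal{L}$ already contains a $P_4$. Hence attaching any isolated vertex or any $K_2$ to $C_4$, $K_4-e$, or $K_4$ would create a $P_5$ and is illegal. Similarly, joining a $K_2$ to a $C_3$, or joining any two components of combined order at least $5$ that between them host a $P_3$, produces a $P_5$. Inside a single component, $K_2$, $C_3$, and $K_4$ have no available edges to add. What remains is a finite list of possibilities for $C$: a fresh $K_2$ (from two isolated vertices), a $P_3$ (isolated vertex plus $K_2$), a $P_4$ (from merging two $K_2$'s), the graph $G_4^1$ consisting of a triangle with a pendant edge (from isolated vertex plus $C_3$), a $K_4-e$ (a chord added to an existing $C_4$), or a $K_4$ (the missing edge added to a $K_4-e$).

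Now I match each possibility with the corresponding rule of Max's strategy and check the outcome. Rule~\ref{1} covers $C = K_2$; if Max stays in Stage 1 he claims another isolated $K_2$, still in $\mathcal{L}$. Rule~\ref{2} turns $C = P_3$ into $C_3$. Rule~\ref{3a} closes $C = P_4$ into $C_4$. Rule~\ref{3b} fires precisely when $C$ is $G_4^1$ (two free edges, claiming one gives $K_4-e$) or when $C$ is $K_4-e$ (one free edge, claiming it gives $K_4$), both landing in $\mathcal{L}$. Rule~\ref{3c} applies to $C = K_4$, which has no free edge and is already in $\mathcal{L}$; when Max remains in Stage 1 he merely claims an isolated $K_2$ elsewhere, leaving the $K_4$ intact. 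Finally, Rules~\ref{3d} and~\ref{3e} send Max to Stage 2, so nothing needs to be checked for them here. Components other than $C$ are not touched during this round, so by the inductive hypothesis they remain in $\mathcal{L}$, completing the induction.

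The main obstacle is making the enumeration of Mini's legal moves both exhaustive and tight: once that catalogue is established, Max's strategy is precisely tuned so that each of the six outcomes feeds into a rule whose result lies in $\mathcal{L}$. The delicate part is the $P_5$-freeness check that rules out merging an isolated vertex or $K_2$ with any 4-vertex component of $\mathcal{L}$, since this is what prevents an exotic component (e.g.\ $C_3$ with a hanging $P_2$, or $C_4$ with a hanging vertex) from arising and escaping Max's case-based response.
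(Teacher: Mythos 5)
Your proof is correct, and it takes the only natural route: the paper itself omits the argument, declaring Claim~\ref{L1} ``an immediate consequence of the above strategy,'' so your induction on rounds is exactly the verification the authors leave to the reader. Your enumeration of Mini's legal moves (using the inductive hypothesis to exclude $P_3$, $P_4$ and $K_{1,3}$ as pre-existing components, and $P_5$-freeness to forbid attaching anything to a $4$-vertex component or a $K_2$ to a $C_3$) and the matching of each outcome to the rules \ref{1}--\ref{3e} is complete and accurate.
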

Note that the only connected components on three or four vertices that can be transformed into a connected component on five vertices by adding one edge (without making a copy of $P_5$) are $P_3, P_4, K_{1,3}$ and $G_4^1$. However, following the strategy above, during Stage 1 it is not possible to have any of these components when Mini is to move, as we can see from Claim \ref{L1}.
Therefore, during Stage 1 there is no option for Mini to make a connected component on more than four vertices.

\smallskip

Now we count the number of triangles at the end of the game.

\smallskip

Observe that at the moment Stage 2 is started, besides components provided by Claim \ref{L1}, there can be at most one of the following three components: $P_3$, $G_4^1$, or an isolated vertex.
\begin{itemize}
    \item In case there is an isolated vertex, that means Max played by rule \ref{3e} and each of the remaining connected components in the graph is one of the following: $C_4$, $K_4-e$ or $K_4$. Hence, at the end of the game, there will be $4 \frac{n-1}{4}= n-1$ triangles.
    
    \item In case there is one $G_4^1$, there are no isolated vertices in the graph. Therefore, this component must become a $K_4$ for the rest of the game. 
    
    \item In case there is one $P_3$, that means that Max played by rule \ref{3d} or rule \ref{1} and there are no more isolated vertices in the graph. Here, Mini can make a connected component on five vertices making the central vertex of this path adjacent to one end of an isolated edge. If that happens, just one edge can be added to this component, the one that closes a triangle.
    
\end{itemize}

Later in Stage 2, the only components that can change the number of vertices are pairs of isolated $K_2$'s that can be transformed into a copy of $K_4$ by the end of the game. Therefore, the worst scenario for Max is that at the end of the game, there is one component on 5 vertices, one isolated edge that cannot be added to any of the components, and all of the remaining components are triangles. This implies $$\displaystyle s_2(n,\#K_3,P_5) \geq \frac{n-4}{3}.$$\\
Note that the same strategy works when Max is the first player, hence $$\displaystyle s_1(n, \#K_3, P_5) \geq \frac{n-4}{3}.$$

\bigskip

Now, we give a strategy for Mini.

Let us first suppose that she is the first player. If there is at least one isolated vertex before the move of Mini, she follows Stage 1, otherwise, she proceeds to Stage 2.

\medskip 

\textbf {Stage 1}. Mini chooses the first of the following three conditions that is satisfied.

\begin{enumerate}
    \item \label{21} There is a connected component on four vertices in the graph.\\
    Then Mini chooses a vertex of maximum degree in this component and connects it to an isolated vertex. (Mini makes a connected component on five vertices.)
    \item \label{22} There is a $K_2$ component in the graph.\\
    Then Mini claims an edge that connects it with an isolated vertex, creating a copy of $P_3$.
    \item \label{23} There are two isolated vertices. \\ 
    Then Mini connects them. 
    \item \label{24} Otherwise, she proceeds to Stage 2.
\end{enumerate}

\textbf {Stage 2}. Mini repeatedly plays any legal move to the end of the game.
\medskip

Now we analyze the given strategy. The following claim is an immediate consequence of the above strategy (and so we omit the proof). 

\begin{cl}\label{L2}
During Stage 1, after Mini has finished her move, the graph consists of the following connected components: at most one $K_2$, some $P_3$'s, some $K_3$'s, and some connected components on at least five vertices.
\end{cl}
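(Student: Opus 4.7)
The plan is to complete the proof by first establishing Claim~\ref{L2} and then using it to bound the end-of-game triangle count.

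For Claim~\ref{L2}, I will argue by a short induction on the rounds of Stage~1. Assuming the L2 structure holds just after a Mini move, Max's single edge can create at most one new 4-vertex component, and the only possibilities are combining an isolated vertex with a $P_3$ to yield $P_4$ or $K_{1,3}$, combining an isolated vertex with a $K_3$ to yield $G_4^1$, or combining two $K_2$'s to yield $P_4$. For each of the shapes $P_4$, $K_{1,3}$, $G_4^1$, connecting an isolated vertex to a vertex of maximum degree produces a $P_5$-free 5-vertex component, so Mini's rule~1 is always legal and restores the invariant; denser 4-vertex components $C_4$, $K_4-e$, $K_4$ cannot appear, as their formation would require more than one edge on top of an L2 configuration. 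Mini's rules~2 and~3 are visibly legal and preserve L2.

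The next step is to bound the triangle count at the end of the game. Let $a$, $b$, $m$ denote respectively the numbers of $P_3$, $K_3$, and components of size at least five at the instant Stage~2 begins, and let $d \in \{0,1\}$, $e \in \{0,1\}$ denote the numbers of isolated vertices and $K_2$-components at that moment; the triggers that moved Mini to Stage~2 force $de = 0$ and no 4-vertex component. The vertex count yields $3a + 3b + 5m + d + 2e \le n$, whence $a + b + m \le (n - d - 2e)/3$. By the opening remark of the proof, each component of size at least five contains at most one triangle, and each $P_3$ contributes at most one triangle if later closed to $K_3$. Thus the final triangle count is at most $a + b + m$ plus a Stage~2 bonus. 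I will enumerate the bonus by observing that any edge merging two distinct components of sizes in $\{2, 3\}$ creates a $P_5$ (a direct check over all such pairs), so the only extra merging possibility involves the single isolated vertex (when $d = 1$) being joined to a $P_3$, a $K_3$, or a $\ge 5$-vertex component. Merging with a $\ge 5$-vertex component adds no bonus; merging with a $P_3$ or $K_3$ can only culminate in a $K_4$ subgraph on those four vertices (since $K_5$ contains $P_5$), and the bonus over the baseline attribution of one triangle to that component is at most $4 - 1 = 3$.

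Combining these bounds yields a final triangle count of at most $a + b + m + 3 \le (n-1)/3 + 3 = (n-4)/3 + 4$ in the maximizing case $d = 1$, $e = 0$; the cases $d = 0$ give a strictly smaller upper bound. The case of Mini as the second player is handled by the same strategy, simply treating Max's opening edge as the start of the process. The most delicate point is the Stage~2 bonus analysis --- ruling out any triangle-rich structure beyond $K_4$ and confirming that no other mergers are legal --- which relies precisely on the $P_5$-freeness constraint restricting all triangle-rich components to subgraphs of $K_4$.
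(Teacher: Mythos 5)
Your inductive verification of Claim~\ref{L2} is correct and is essentially the argument the paper has in mind (the paper simply declares the claim an immediate consequence of Mini's strategy and omits the proof): under the invariant, Max's single edge can create a $4$-vertex component only of type $P_4$, $K_{1,3}$ or $G_4^1$, or a second $K_2$, and Mini's rules~1--3 legally restore the invariant. For the claim itself there is nothing to object to.

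However, the continuation of your proposal (the triangle count, which belongs to the proof of Theorem~\ref{T1} rather than to Claim~\ref{L2}) contains genuine errors. First, the assertion that ``any edge merging two distinct components of sizes in $\{2,3\}$ creates a $P_5$'' is false: two $K_2$'s merge into a $P_4$; a $K_2$ joined to the centre of a $P_3$ gives a $5$-vertex tree whose longest path is a $P_4$; two $P_3$'s joined centre-to-centre likewise contain no $P_5$. Second, at the moment Stage~2 begins with no isolated vertex ($d=0$), Max's last move may have consumed the final isolated vertex to create a $P_4$, $K_{1,3}$ or $G_4^1$, and there may be two $K_2$'s rather than at most one; the paper explicitly allows ``at most two $K_2$'' and ``at most one component from the set $\{K_{1,3},P_4,G_4^1\}$'' in this case. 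Your bonus accounting therefore misses legal mergers (e.g.\ two $K_2$'s eventually forming a $K_4$). The correct route, as in the paper, is to observe that these possibilities are mutually exclusive, so that at most one component can end up as a $K_4$, which yields the additive constant $4$ in the bound $\frac{n-4}{3}+4$.
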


Note that each of the connected components on four vertices that cannot be transformed into a bigger one as described in rule \ref{21} (these are the components that belong to the set $\{C_4, K_4-e, K_4\}$) was already a connected component on four vertices before the last edge was added. Now we have a situation similar to the one in Claim \ref{L2}, and we can conclude that neither one of these components can occur when Mini is to move.

At the moment Stage 2 is started, there are two options:
\begin{enumerate}
    \item There is no isolated vertex in the graph.
    \item There is one isolated vertex, but there is neither an isolated edge nor a connected component on four vertices.
\end{enumerate}

If there is one isolated vertex that means that each of the connected components is on three vertices or at least five vertices. The worst case for Mini is when each of the connected components is on three vertices, then it is possible that at most one of them becomes $K_4$, by the end of the game. That implies  $\displaystyle s_2(n,\#K_3,P_5) \leq \frac{n-4}{3} +4$.

Otherwise, if there is no isolated vertex, the graph consists of the following connected components: at most two $K_2$, some $P_3$'s, some $K_3$'s, at most one component from the set \{$K_{1,3}, P_4, G_4^1$\} and some connected components on at least five vertices. Note that if there are two isolated edges then there is no connected component on four vertices and vice versa. Since connected components on three vertices cannot be extended by one vertex, there can be at most one $K_4$ by the end of the game. Therefore, $$\displaystyle s_2(n,\#K_3,P_5) \leq \frac{n-4}{3}+4.$$ As the strategy is the same when Mini is the second player, the same upper bound works for $s_1$.
This concludes the proof of the theorem.
\end{proof}

\begin{proof}[Proof of Theorem \ref{p4p5}]
    First, we obtain a lower bound by describing a strategy of Max. A component is \textit{good} if it contains a copy of $C_4$. At the end of the game, a good component must be a $K_4$. 
    Note that a $K_4$ component contains $12$ copies of $P_4$.
    
    \textsc{Phase I} as long as there exist at least $12\sqrt{n}$ isolated vertices.

    Max makes sure that no triangle is created and after his move, there is no $P_3$-component as follows:
    \begin{itemize}
        \item 
        if Mini extends an edge to a copy of $P_3$, Max extends it to a copy of $P_4$,
        \item 
        if Mini claims an edge $uw$ with $xyuv$ being a path, then Max claims $yz$ with $z$ being an isolated vertex thus creates a $D_{2,2}$, and moves to \textsc{Phase I/B},
        \item 
        if all non-isolated vertices and non-good components are $P_4$'s and edges, then Max prefers to create a copy of $C_4$, if these components form a matching, then Max claims an isolated edge. 
    \end{itemize}

    \textsc{Phase I/B}

    This phase occurs only if a $D_{2,2}$ is created while at least $12\sqrt{n}$ isolated vertices remain. Then only isolated vertices can be added only to the centers of this component. Mini can destroy at most 2 isolated vertices per move, so Max can add at least $4\sqrt{n}$ isolated vertices to the double star and create a $D_{2\sqrt{n},2\sqrt{n}}$ that contains $4n$ copies of $P_4$.

    \textsc{Phase II}

    Max plays arbitrary legal edges. At the beginning of the phase, there are $12\sqrt{n}$ isolated vertices (call them \textit{bad vertices}) any of which can be merged to at most two non-good components. A $P_4$ cannot be merged with an isolated edge, therefore all components without bad vertices will end up being good with the exception of at most one isolated edge. So apart from at most $60\sqrt{n}$ vertices, all others end up in $K_4$'s yielding $3(n-60\sqrt{n})$ copies of $P_4$.

    \medskip

    Next, we consider Mini's strategy to obtain an upper bound on $s(n,\# P_4,P_5)$. A component containing a copy of $C_4$ is still good. Mini makes sure that after her move, no $P_4$-component and no $P_3$-component exists as follows:
    \begin{itemize}
        \item 
        if Max extends an edge to a copy of $P_3$, she closes it to a triangle,
        \item 
        if Max connects two edges to a copy of $P_4$ or if Max connects an isolated vertex to a triangle, she closes the component a good one,
        \item 
        if the non-good, non-isolated vertex components are edges and triangles, then Mini connects two isolated vertices.
    \end{itemize}
    Mini cannot follow this strategy once there remains at most one isolated vertex. In that case, she connects this isolated vertex to an isolated edge or a triangle. If a copy of $P_3$ is created, then this path can be connected to an edge yielding a $D_{1,2}$ containing  2 copies of $P_4$. All other components will either be $K_3$'s or $K_4$'s at the end of the game, so $s(n,\# P_4,P_5)\le 3n$.
\end{proof}

\section{Concluding remarks}

We studied a game analogue of the generalized graph Tur\'an problem, and gave results for several natural choices of $H$ and $F$. Even though all of our lower and upper bounds are reasonably close to each other, in several cases they are not completely tight and it remains an open problem to determine the exact score. 

Furthermore, there are many other choices of $H$ and $F$ for which the $H$-score in the $F$-saturation game remains unexplored. It would be interesting to have more cases of the game where we know the score precisely, and to study its relation to the corresponding generalized Tur\'an number.

As a natural extension of our setup we could drop the restriction of $H$ and $F$ being fixed graphs (or a collection of fixed graphs). Instead, we could allow either of them to be the representatives of any other graph-theoretic structure, say a spanning graph like a spanning tree or a Hamilton cycle.


\begin{thebibliography}{9}
\bibitem{Alon}
N.~Alon, C.~Shikhelman, Many $T$-copies in $H$-free graphs. \textit{Journal of Combinatorial Theory, Series B} 121 (2016), 146–172.

\bibitem{biroetal}
Cs.~Bir\'o, P.~Horn, D.J.~Wildstrom, An upper bound on the extremal version of Hajnal’s triangle-free game. \textit{Discrete Applied Mathematics} 198 (2016), 20-28.

\bibitem{West}
J.M.~Carraher, W.B.~Kinnersley, B.~Reiniger, D.B.~West, The game saturation number of a graph. \textit{Journal of Graph Theory} 85 (2017), 481–495.

\bibitem{CL}
D.~Chakraborti, P.S.~Loh, Minimizing the numbers of cliques and cycles of fixed size in an $F$-saturated graph. \textit{European Journal of Combinatorics}, 90 (2020), 103185.

\bibitem{Englishetal}
S.~English, T.~Masařík, G.~McCourt, E.~Meger, M.S.~Ross, S.~Spiro, Linear Bounds for Cycle-Free Saturation Games. \textit{The Electronic Journal of Combinatorics} 29 (2022), P3-5.

\bibitem{ergetal}
B.~Ergemlidze, A.~Methuku, M.~Tait, C.~Timmons, Minimizing the Number of Complete Bipartite Graphs in a $K_s$-Saturated Graph. \textit{Discussiones Mathematicae Graph Theory} 43 (2023), 793–807.

\bibitem{EG}
P.~Erd\H os, T.~Gallai, On maximal paths and circuits of graphs. \textit{Acta Mathematica Hungarica} 10 (1959), 337-356.

\bibitem{Furedietal}
Z.~F\"uredi, D. Reimer, \'A.~Seress, Hajnal's triangle-free game and extremal graph problems. \textit{Congressus Numerantium} 82 (1991), 123-123.

\bibitem{HKNS}
D.~Hefetz, M.~Krivelevich, A.~Naor, M.~Stojakovi\'c, On saturation games. \textit{European Journal of Combinatorics} 51 (2016), 315-335.

\bibitem{KT}
L.~K\'aszonyi, Zs.~Tuza, Saturated graphs with minimal number of edges. \textit{Journal of Graph Theory} 10 (1986), 203-210.

\bibitem{Ketal}
J.~Kritschgau, A.~Methuku, M.~Tait, C.~Timmons, Few $H$ copies in $F$‐saturated graphs. \textit{Journal of Graph Theory} 94 (2020), 320-348.

\bibitem{LeeRiet}
J.D.~Lee, A.E.~Riet, $F$-saturation games. \textit{Discrete Mathematics} 338 (2015), 2356-2362.

\bibitem{LeeRiet2}
J.D.~Lee, A.E.~Riet, Graph Saturation Games. \textit{Electronic Notes in Discrete Mathematics}, 50  (2015), 349-354.

\bibitem{CB}
B.~Patkós, M.~Stojaković, M.~Vizer, The Constructor-Blocker Game. to appear in \textit{Applicable Analysis and Discrete Mathematics}, \url{https://arxiv.org/abs/2203.14707}.

\bibitem{RV}
J.~Radcliffe, A.~Volk, Generalized saturation problems for cliques, paths, and stars. arXiv preprint (2021), \url{https://doi.org/10.48550/arXiv.2101.04213}.

\bibitem{RN}
S.~Russel, P.~Norvig, \textbf{Artificial Intelligence: A Modern Approach,} 3rd Edition, Prentice Hall Press, 2009.

\bibitem{Spiro}
S.~Spiro, Saturation Games for Odd Cycles. \textit{The Electronic Journal of Combinatorics} 26 (2019), P4-11.
\end{thebibliography}
\end{document}